\theoremstyle{definition}
\newtheorem{definition}{Definition}[section]
\theoremstyle{plain}
\newtheorem{theorem}[definition]{Theorem}
\newtheorem{lemma}[definition]{Lemma}
\newtheorem{proposition}[definition]{Proposition}
\newtheorem{remark}[definition]{Remark}
\date{\today}
\begin{document}

\title {On the dynamics of skew tent maps}
\author{
{\bf\large Kaijen Cheng}\vspace{1mm}\\
{\it College of Mathematics and Computer Science,}\\
{\it Quanzhou Normal University}\\
{\it Quanzhou 362000, Fujian, P.R.China}\\
{\it\small e-mail: kaijen@cycu.org.tw }\vspace{1mm}\\
{\bf\large Kenneth J. Palmer}\vspace{1mm}\\
{\it Department of Mathematics,}\\
{\it National Taiwan University}\\
{\it Taipei - Taiwan}\\
{\it\small e-mail: palmer@math.ntu.edu.tw}\vspace{1mm}\\
}

\maketitle

\begin{abstract}
In this paper we give an elementary treatment of the dynamics of skew tent maps.
We divide the two-parameter space into six regions.
Two of these regions are further subdivided into infinitely many
regions. All of the regions are given explicitly.
We find the attractor in each subregion,  
determine whether the attractor is a periodic orbit or is chaotic,
and also determine the asymptotic fate of every point.
We find that when the attractor is chaotic, it is either
a single interval or the disjoint union of a finite number
of intervals; when it is a periodic orbit, all periods
are possible. Sometimes, besides the attractor, there exists 
an invariant chaotic Cantor set.
\end{abstract}

\newpage

\section{Introduction}

In this paper we give an elementary treatment of the dynamics of skew tent maps
following on from \cite{CP} where a special case was studied. 
Most of the results given here were already proved in \cite{ITN1, ITN}
but using non-elementary methods.
The papers \cite{II, MV, MV1} also concern skew tent maps
but confine themselves to the discussion of sophisticated concepts such as kneading sequences,
entropy and renormalization. Here our modest aim is to find the attractor,
determine whether the dynamics on the attractor is chaotic or not 
(in the sense of Devaney \cite{D}) and
determine the asymptotic fate of all points and to do these things using
elementary methods and with complete proofs. Two other elementary approaches have been given by
Bassein \cite{B} and Lindstr\" om and Thunberg \cite{LT}.  Also recently a
description of most of the results, but without many of the proofs, has been given by 
Sushko et al. in \cite{SAG}.
\medskip

We divide the parameter space into six regions. Bassein \cite{B}
considers only four of these regions. 
Lindstr\" om and Thunberg \cite{LT} also omit a couple of these regions.
Here we study all six regions and also obtain more 
detail about the dynamics than Bassein \cite{B} or Lindstr\" om and
Thunberg \cite{LT} in the two regions (Sections 4 and 5),
where the more complicated behaviour occurs. 
In fact, the main results here are in Sections 4 and 5 which form the bulk of the paper.
As in Sushko et al. \cite{SAG}, we find the attractor in each subregion but
we also determine the asymptotic fate of every point.
In Section 4 we show that apart from the points whose orbits go to infinity,
all other points except those which are preimages of a finite set of unstable 
periodic orbits go to the attractor, which is a disjoint union of a finite number 
of closed intervals (a so-called band attractor) on which the dynamics is chaotic. 
In Section 5 we show that apart from the points whose orbits go to infinity,
all other points, except a chaotic Cantor set and its preimages or a certain periodic orbit and its preimages, go to the attractor,
which can be a periodic orbit or a band attractor on which the dynamics is chaotic. 
In remarks 3.2, 3.4, 3.6, 3.8, 4.5 and 5.12, we give more information about relation between our results 
and those in \cite{B,ITN1,ITN,LT,SAG}.  
\medskip

More detail about the results in the paper are given at the end of the next section
after we have introduced the requisite notation.

\section{Preliminaries}

A general (continuous) tent map can be defined as follows: we take 2 non-horizontal straight lines,
not parallel, which intersect at a point $(x_0,y_0)$, one with slope $r$ and the 
other with slope $-k$. Then we define our tent map to be
\begin{equation}\label{0} f(x)=\begin{cases} s+rx & (x\le x_0)\\ t-kx & (x\ge x_0),\end{cases}
\end{equation}
where $s+rx_0=t-kx_0=y_0$. 
If $rk<0$, the map is a homeomorphism and the dynamics is trivial. 
Also if $r<0$, $k<0$, the map is conjugate via the transformation $h(x)=-x$ to
\[(h^{-1}fh)(x)=\begin{cases} -t-kx & (x\le -x_0)\\ -s+rx & (x\ge -x_0).\end{cases}\]
So we may assume that $r>0$, $k>0$ in \eqref{0}.
\medskip

Then if we define $H(x)=x+x_0$, we find that
\[ g(x)=(H^{-1}fH)(x)
=\begin{cases} \gamma+rx & (x\le 0)\\ \gamma-kx & (x\ge 0),\end{cases}\]
where $\gamma=y_0-x_0$. 
Now if $\gamma\le 0$,
we see that $g(x)\le 0$ for all $x$. So we can restrict to $x\le 0$
where $g$ is strictly increasing so that the dynamics is trivial.
\medskip

So the
only interesting case is $\gamma>0$. Then if we take $\ell(x)=\gamma x$,
we find that
\[ (\ell^{-1}g\ell)(x)
=\begin{cases} 1+rx & (x\le 0)\\ 1-kx & (x\ge 0),\end{cases}.\]
So, without loss of generality, we can consider maps
\begin{equation}\label{1} f(x)=\begin{cases} 1+rx & (x\le 0)\\ 1-kx& (x\ge 0).\end{cases} \end{equation}
where $r>0$, $k>0$. Note that $f(x)\le 1$ for all $x$.

\begin{remark}
This is essentially the same map studied 
in (3) in \cite{SAG}. Their $a_{{\cal L}}=r$ and $a_{{\cal R}}=-k$.
\end{remark}

First we give some preliminary results about the map in \eqref{1}.

\begin{lemma}\label{lem1} Let $f$ be as in \eqref{1}. 
When $r>1$, set $I=[\alpha,\beta]$, where
\[ \alpha=-\frac{1}{r-1},\quad \beta=\frac{r}{k(r-1)}\]
and when $0<r\le 1$, set $I=(-\infty,\infty)$. Then
\medskip

{\rm (i)} when $r>1$, $f(\alpha)=\alpha$, $f(\beta)=\alpha$ and
$x\notin I\Longrightarrow f^{n}(x)\to -\infty\quad{\rm as}\quad n\to\infty$;
\medskip

{\rm (ii)} for all $x\in{\rm int}(I)$, there exists $n\ge 0$ such that 
$f^{n}(x)\in [1-k,1]$;
\medskip

{\rm (iii)} when $k\le 1$, or $k>1$ and $r\le k/(k-1)$,
then $[1-k,1]\subset I$, $f(I)\subset I$ and $f([1-k,1])\subset [1-k,1]$;
\medskip

{\rm (iv)} when $k>1$ and $r\le k/(k-1)$,
if we define $h:[0,1]\to [1-k,1]$ by $h(x)=1-k+kx$, 
then $h$ is bijective and $g=h^{-1}\circ f\circ h$ maps $[0,1]$ onto itself and is given by 
\begin{equation}\label{2} g(x)=\begin{cases} b+rx & (0\le x\le a)\\ 
                      k(1-x) & (a\le x\le 1),\end{cases}\end{equation}
where $a=1-1/k$, $b=1-ra$.
\end{lemma}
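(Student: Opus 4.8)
The plan is to establish (i)--(iv) largely by direct computation, the only genuinely dynamical ingredients being the escape argument in (i) and an affine-recursion argument in (ii), and to keep returning to the trichotomy of parameter regions: $(a)$ $k\le 1$; $(b)$ $k>1$ and $r\le k/(k-1)$; $(c)$ $k>1$ and $r> k/(k-1)$. Regions $(a)$ and $(b)$ together are exactly the hypothesis of (iii), and when $r\le 1$ one is automatically in $(a)$ or $(b)$. It is convenient to prove (iii) first, then use it in (ii) and (iv). The step I expect to be the real work is (ii).

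For (i), the identities $f(\alpha)=\alpha$ and $f(\beta)=\alpha$ follow by substitution, using $\alpha<0<\beta$ when $r>1$. The escape claim rests on the identity $f(x)-\alpha=r(x-\alpha)$ for $x\le 0$, which is just $1+rx-\alpha=r(x-\alpha)$ together with $1+r\alpha=\alpha$. For $x<\alpha$ the iterates stay below $\alpha<0$, so this applies at every step and gives $f^{n}(x)-\alpha=r^{n}(x-\alpha)$, which tends to $-\infty$ because $r>1$ and $x-\alpha<0$. For $x>\beta$ one step gives $f(x)=1-kx<1-k\beta=\alpha$, reducing to the previous case; and $\{x<\alpha\}\cup\{x>\beta\}$ is the whole complement of $I$.

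For (iii): the inclusion $[1-k,1]\subseteq I$ is the chain $\alpha\le 1-k\le 1\le\beta$, each step equivalent to $r(k-1)\le k$ (trivially true when $k\le 1$); $f(I)\subseteq I$ because the peak of $f$ sits at $0\in\mathrm{int}(I)$, so $f(I)=[\alpha,1]$ and $1\le\beta$; and $f([1-k,1])\subseteq[1-k,1]$ follows from computing the image, splitting on the sign of $1-k$: it equals $[1-k,\,1-k+k^{2}]$ when $k\le 1$ and $[1-k,1]$ when $k>1$, where in the latter case $1+r(1-k)\ge 1-k$ uses $r(k-1)\le k$. For (ii): in region $(c)$ I would first check $1-k\le\alpha$ and $\beta\le 1$ (each equivalent to $r(k-1)\ge k$), so $I\subseteq[1-k,1]$ and $n=0$ works for every $x\in\mathrm{int}(I)$. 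In regions $(a)$ and $(b)$, (iii) gives $f(I)\subseteq I$, so the orbit of $x\in\mathrm{int}(I)$ stays in $I$; assume for contradiction it never enters $[1-k,1]$. Since $f\le 1$ everywhere, every $f^{n}(x)$ with $n\ge 1$ lies below $1-k$, and a short case-check (when $1-k>0$, use $f^{n+1}(x)=1-kf^{n}(x)$ on the range $[0,1-k)$) upgrades this to $f^{n}(x)<0$ for all $n\ge 1$, so the orbit satisfies $y_{n+1}=1+ry_{n}$. If $r\le 1$ this forces $y_{n}\to 1/(1-r)>0$ (or $y_n\to+\infty$ when $r=1$), contradicting $y_{n}<0$. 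If $r>1$, one checks $f(\mathrm{int}(I))\subseteq(\alpha,1]$, so $y_{1}>\alpha$ and hence $y_{n}-\alpha=r^{n-1}(y_{1}-\alpha)\to+\infty$, again contradicting $y_{n}<0$ (this also shows no interior orbit can stall at the fixed point $\alpha$). Therefore some $f^{n}(x)$ lies in $[0,1]$ (or already in $[1-k,1]$), and then $f^{n+1}(x)=1-kf^{n}(x)\in[1-k,1]$.

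For (iv): $h(x)=1-k+kx$ is an affine bijection of $[0,1]$ onto $[1-k,1]$; since for $k>1$, $r\le k/(k-1)$ the image computation in (iii) shows $f$ maps $[1-k,1]$ onto itself, $g=h^{-1}\circ f\circ h$ maps $[0,1]$ onto itself; and the two-branch formula with $a=1-1/k$, $b=1-ra$ drops out by observing $h(x)\le 0\iff x\le a$, substituting $h(x)=1-k+kx$ into the two branches of $f$, applying $h^{-1}$, and collecting constants. The main obstacle is (ii): everything else is routine algebra, but (ii) requires handling the three parameter regions coherently — in particular recognising that when $[1-k,1]$ fails to lie inside $I$ one must instead exploit $I\subseteq[1-k,1]$ — together with the recursion estimate and the fact that $\mathrm{int}(I)$ is mapped strictly into $(\alpha,1]$.
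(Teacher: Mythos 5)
Your proposal is correct and follows essentially the same route as the paper: (i), (iii) and (iv) by direct computation, and (ii) by showing that an orbit which remains negative must obey the affine recursion $y_{n+1}=1+ry_n$ and therefore cannot remain negative, treating $r<1$, $r=1$ and $r>1$ separately. The only difference is organizational --- you prove (iii) first and split (ii) by parameter region (using $I\subseteq[1-k,1]$ when $r>k/(k-1)$), whereas the paper runs a single uniform argument that first lands the orbit in $[0,1]$ and then applies $f$ once more to reach $[1-k,1]$ --- but the substance is identical.
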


\begin{proof}
(i) Clearly $f(\alpha)=\alpha$, $f(\beta)=\alpha$. If $x<\alpha$,
\[ f(x)-\alpha=r(x-\alpha)<0.\]
It follows that for $n\ge 0$, $f^n(x)<\alpha$ and $f^{n}(x)-\alpha=r^{n}(x-\alpha)$.
Hence $f^{n}(x)-\alpha\to -\infty$ as $n\to\infty$.
If $x>\beta$, then 
\[ f(x)=1-kx<1-k\beta=\alpha\]
and it follows from the previous part that $f^{n}(x)-\alpha\to -\infty$ as $n\to\infty$.
\medskip

(ii) If $\alpha<x<\beta$, then $\alpha<f(x)\le 1$. So we may assume
$\alpha<x\le 1$ where $\alpha=-\infty$ if $r\le 1$.
Then we show there exists $n\ge 0$ such that $f^{n}(x)\in [0,1]$.
Suppose $\alpha< f^{n}(x)<0$ for $n\ge 0$.
If $r>1$, by induction on $n$, it follows that $f^{n}(x)-\alpha=r^{n}(x-\alpha)$
which $\to \infty$ as $n\to\infty$. If $r=1$, $f^{n}(x)=n+x$
which again $\to \infty$ as $n\to\infty$. On the other hand, if $r<1$, 
\[ f^n(x)= 1+r+\cdots+r^{n-1}+r^nx\to 1/(1-r)>0 \]
as $n\to\infty$. So in all cases, there exists $n\ge 0$ such that
$f^{n}(x)\ge 0$ or $f^{n}(x)\le\alpha$. Let $n$ be the first such $n$.
If $n=0$, then $x\ge 0$ since $\alpha<x\le 1$.
If $n\ge 1$, then $\alpha<f^{n-1}(x)<0$
implies that $\alpha=f(\alpha) < f^{n}(x)<1$. Hence $f^{n}(x)\in [0,1]$.
Then $f^{n+1}(x)=1-kf^n(x)\in [1-k,1]$.
\medskip

(iii) First we show $[1-k,1]\subset I$. We need only consider $r>1$.
Then if $k\le 1$ or $k>1$, $r\le k/(k-1)$, it is clear that $\alpha\le 1-k$
and $\beta\ge 1$. It follows that $[1-k,1]\subset I$.
\medskip

Next we show $f(I)\subset I$. If $r\le 1$, this is trivial. If $r>1$, 
\[ \alpha\le x\le 0\Longrightarrow 1+r\alpha\le 1+rx\le 1 \Longrightarrow \alpha\le f(x)\le 1.\]
\[ 0\le x\le \beta\Longrightarrow 1
\ge 1-kx\ge 1-k\beta\Longrightarrow 1\ge f(x)\ge \alpha.\]
However $k\le 1$, or $k>1$, $r\le k/(k-1)$ implies $\beta \ge 1$. Hence $f(I)\subset I$.
\medskip

Next we show $f([1-k,1])\subset [1-k,1]$. When $k>1$,
\[ f([1-k,1])=[1+r(1-k),1]\cup [1-k,1]\subset [1-k,1]\]
if $1+r(1-k)\ge 1-k$, that is, $r\le k/(k-1)$. 
When $0<k\le 1$,
\[ f([1-k,1])=[1-k,1-k(1-k)]\subset [1-k,1].\]

(iv) This is simple algebra.
\end{proof}

\begin{remark}
	Note that $g$ in \eqref{2} is the map studied in \cite{B} and \cite{ITN}.
These authors restrict themselves to the parameter range $k>1$, $0<r<k/(k-1)$,
	thereby excluding the cases in Propositions \ref{prop1} and \ref{prop4} below.
\end{remark}

Now we summarize the results in the rest of the paper.
In Section 3, we study the parameter ranges $k<1$, where there is a stable fixed
point; $k>1$, $r<1/k$ where there is a stable $2-$cycle;
$k>1$, $\max\{1,k/(k^2-1)\}<r<k/(k-1)$ or $k/(k^2-1)<r<1/(k-1)$,
where the map is chaotic on the whole of $[1-k,1]$, and
$k>1$, $r>k/(k-1)$, where the orbits of all points go to infinity
except those lying on a chaotic Cantor set.
\medskip

In Section 4, we study the parameter range $k>1$, $1/k<r<k/(k^2-1)$.
This is divided into subranges (which are explicitly described as in
\cite{SAG}), where the attractor consists of 
a finite number of disjoint closed intervals and, apart from the points whose orbits go to infinity, all other points except a finite set of unstable 
periodic orbits and their preimages, go to the attractor on which the dynamics is chaotic.
\medskip

In Section 5, we study the parameter range $k>1$, $1/(k-1)<r<1$.
Corresponding to each integer $m\ge 2$, there is a subrange.
Inside each subrange there are 4 further subranges, each with a different kind of attractor: (i) a stable periodic
orbit with period $m+1$, (ii) the union of $m+1$ disjoint closed intervals on which
the dynamics is chaotic, (iii) the union of $2(m+1)$ disjoint closed intervals on which
the dynamics is chaotic or (iv) $[1-k,1]$ itself is chaotic. As in \cite{SAG}, the different
subranges are described explicitly. Moreover, we show that in the cases (i)-(ii), 
the orbits of all points, except those which lie on a chaotic Cantor set or are preimages of this set, go to the attractor,
in case (iii) the orbits of all points, except those which lie on a chaotic Cantor set or on a certain periodic orbit or are preimages of
the set or the periodic orbit, go to the attractor,
whereas in case (iv) the orbits of all points go to the attractor.
\medskip

\begin{figure}
\centering
\includegraphics[width=0.80\textwidth]{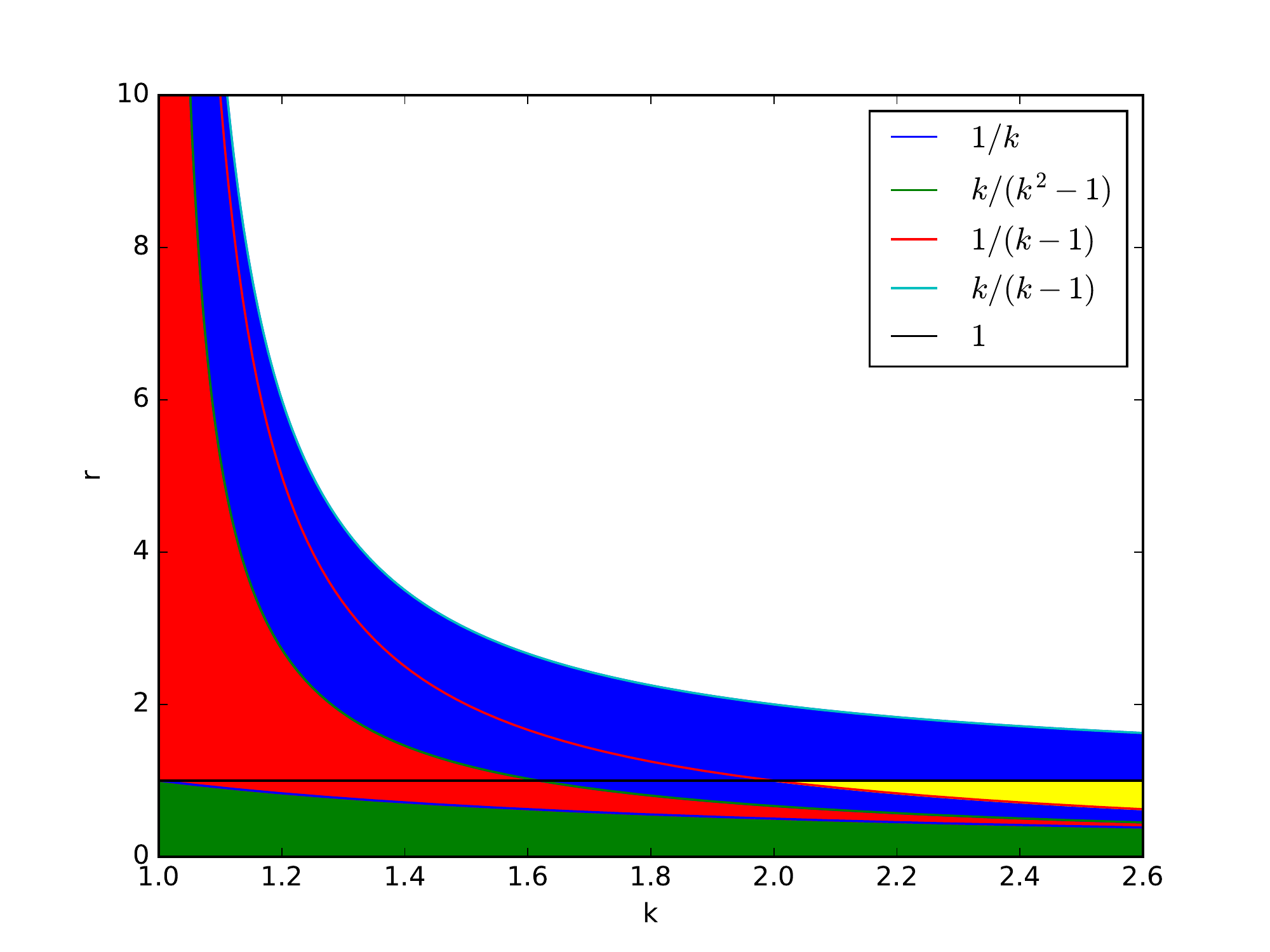}
\caption{Regions in $(k,r)-$parameter space: attracting period 2 cycle (green), 
chaos on $[1-k,1]$ (blue), chaotic band attractors (red), 
attracting periodic orbits and chaotic band attractors (yellow). }
\end{figure}

\section{The simple cases
	}

\subsection{The attractor is a fixed point}

\begin{proposition}\label{prop1}
Let $f$ be as in \eqref{1} and $I$ as in Lemma \ref{lem1}. When $k<1$ and $x$ is in int$(I)$, then $f^n(x)$ converges to 
the fixed point $x^*=\frac{1}{k+1}$. 
\end{proposition}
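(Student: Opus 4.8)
The plan is to reduce the whole statement to the elementary fact that, on the invariant interval $[1-k,1]$, the map $f$ is a single affine contraction whose fixed point is $x^*$. First I would locate the fixed point: for $x\ge 0$ we have $f(x)=1-kx$, whose unique fixed point is $x^*=\tfrac{1}{k+1}$; and since $0<k<1$ one checks at once that $1-k\le x^*\le 1$, so $x^*\in[1-k,1]\subset[0,\infty)$. In particular, on $[1-k,1]$ the map $f$ is exactly the affine map $u\mapsto 1-ku$, which satisfies $f(u)-x^*=-k(u-x^*)$.

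Next I would use Lemma \ref{lem1} to bring an arbitrary orbit into this interval and keep it there. Given $x\in\mathrm{int}(I)$, part (ii) of Lemma \ref{lem1} yields an integer $n_0\ge 0$ with $y:=f^{n_0}(x)\in[1-k,1]$. Since $k<1$, the case $k\le 1$ of part (iii) of Lemma \ref{lem1} gives $f([1-k,1])\subset[1-k,1]$; hence by induction $f^{n}(x)\in[1-k,1]$ for every $n\ge n_0$, and throughout this tail of the orbit $f$ acts as the affine map above. Therefore $f^{n_0+j}(x)-x^*=(-k)^{j}(y-x^*)$ for all $j\ge 0$, and since $0<k<1$ the right-hand side tends to $0$; thus $f^{n}(x)\to x^*$ as $n\to\infty$, which is the claim.

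There is no real obstacle here: the substantive work has already been done in Lemma \ref{lem1}, and what remains is the standard geometric-series estimate for a contraction. The only points needing a little care are (a) verifying $x^*\in[1-k,1]$, so that the contraction argument is being run on a forward-invariant set containing the fixed point, and (b) remembering to invoke the $k\le 1$ clause of Lemma \ref{lem1}(iii) for that forward invariance (rather than the $k>1$, $r\le k/(k-1)$ clause). Once these are checked, the convergence $f^n(x)\to x^*$ is immediate.
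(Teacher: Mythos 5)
Your proposal is correct and follows essentially the same route as the paper: use Lemma \ref{lem1} (ii) and (iii) to trap the orbit in the forward-invariant interval $[1-k,1]$, then observe that there $f$ is the affine map $x\mapsto 1-kx$ and contract toward $x^*$ with ratio $k<1$. The extra checks you flag (that $x^*\in[1-k,1]$ and that the $k\le 1$ clause of (iii) is the relevant one) are sensible but the argument is the paper's.
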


\begin{proof} From Lemma \ref{lem1} (ii), (iii), we can assume
$f^{n}(x)\in [1-k,1]$ for all $n\ge 0$. Then  
$\vert f(x)-x^*\vert=|1-kx-(1-kx^*)|=k\vert x-x^*\vert$ and 
by induction $\vert f^{n}(x)-x^*\vert=k^{n}\vert x-x^*\vert$ 
for $n\geq 0$. 
So $f^{n}(x)\rightarrow x^*$ as $n\rightarrow \infty.$ 
The proposition follows.
\end{proof}

\begin{remark} This region is not considered in \cite{B} or \cite{ITN}.
The same result is shown in \cite{SAG}; 
see the first two bullet points on page 587 and the
first part of Section 3.1. See also Theorem 4.1 (c), (d) in \cite{LT}.
\end{remark}

\subsection{The attractor is a stable period 2 orbit}

\begin{proposition}\label{prop2}
Let $f$ be as in \eqref{1}. Suppose $k>1$, $r<1/k$. Then $f$ has a stable $2-$cycle 
and all orbits are attracted to this 
$2-$cycle except $x^*=1/(k+1)$ and its preimages. 
\end{proposition}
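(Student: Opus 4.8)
The plan is to reduce to the interval $[1-k,1]$ using Lemma \ref{lem1}(ii)--(iii), which applies here since $k>1$ and $r<1/k$ forces $r<k/(k-1)$ (as $1/k < k/(k-1)$ for $k>1$). So every point of $\mathrm{int}(I)$ eventually lands in $[1-k,1]$, and points outside $I$ go to $-\infty$; hence it suffices to analyze the dynamics on $[1-k,1]$. On this interval I would first locate the $2$-cycle explicitly. The candidate is a pair $\{p,q\}$ with $p\le 0\le q$, solving $f(p)=q=1-kp$ lying in $[0,1]$ part... wait, more carefully: one point is negative (where $f$ has slope $r$) and one is positive (where $f$ has slope $-k$). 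Solving $p = f(q) = 1 - kq$ and $q = f(p) = 1 + rp$ gives a linear system; I would solve it to get $p = (1-k)/(1+rk)$, $q = (1+r)/(1+rk)$, and check $p<0<q$ and that both lie in $[1-k,1]$ under the hypotheses $k>1$, $r<1/k$ (in particular $q\le 1$ reduces to $r\le k$, which holds).

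Next I would prove this $2$-cycle is globally attracting on $[1-k,1]$ minus the bad set. The key observation is that $f^2$, restricted to the relevant subintervals, is piecewise linear with slopes among $r^2$, $-rk$, and $k^2$; but $k^2>1$, so one cannot simply quote a contraction estimate everywhere. The clean route is: iterate $f$ once more to control which branch you are on. The fixed points of $f$ are $x^* = 1/(k+1)$ (on the decreasing branch, with multiplier $-k$, unstable since $k>1$) and, if $r>1$, also $\alpha$; neither is in the interior picture in the attracting sense. I would argue that for $x\in[1-k,1]$ with $x$ not a preimage of $x^*$, the orbit must eventually have a point $\le 0$ followed by... actually the cleanest: consider the first-return map to $[p,0]$ or directly track $|f^2(x) - p|$ and $|f^2(x)-q|$ on the pieces where the composed slope is $\pm rk$ (absolute value $rk<1$ since $r<1/k$), and show every orbit is funnelled into such a piece. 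The decreasing branch sends $(0,1]$ onto $[1-k,1)$ and then the increasing branch maps negatives toward $0$ with factor $r<1$; combining, after at most one or two steps an orbit avoiding $x^*$ enters the region where the two-step map is a contraction with ratio $rk<1$, giving convergence to $\{p,q\}$.

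The main obstacle is the bookkeeping of the exceptional set: I must show that the \emph{only} points not attracted to the $2$-cycle are $x^*$ and its preimages, i.e. that landing exactly on $x^*$ is the sole way to get stuck. The point $x^*$ is fixed, so its forward orbit never reaches the contracting region; conversely I need to rule out any other trapped behaviour. Since the non-contracting slopes are $r^2<1$ (harmless, still contracting) and $k^2>1$ on the piece $[a',1]$ where both coordinates are $\ge 0$ — but iterating $f$ there immediately leaves, because $f$ maps $[0,1]$ into $[1-k,1]$ and a second application from a point $\le 0$ uses slope $r$ — the only genuinely expanding two-step piece straddles $x^*$, and its expansion pushes orbits away from $x^*$ into the good region unless they sit exactly at $x^*$. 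I would make this precise by a short case analysis on the itinerary (signs of $x, f(x)$) over two steps, identifying the at most one subinterval containing $x^*$ on which $f^2$ is expanding and noting $f^2$ is linear there with $f^2(x^*)=x^*$, so every other point of that subinterval escapes it under forward iteration of $f^2$; combined with the contraction ratio $rk<1$ elsewhere, this yields the result. Finally, convergence of the orbit to the set $\{p,q\}$ upgrades to convergence to the $2$-cycle (alternating between neighbourhoods of $p$ and $q$) by the standard observation that $f$ is continuous and swaps small neighbourhoods of $p$ and $q$.
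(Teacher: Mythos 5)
Your proposal is correct in substance but takes a genuinely different route from the paper. The paper's proof also reduces to $[1-k,1]$ via Lemma \ref{lem1}, but then conjugates to the normalized map $g$ on $[0,1]$, verifies the single inequality $b-(1-a+a^2)=(k-1)(1-rk)/k^2>0$, and quotes the discussion in Section 2 of Bassein's paper for both the existence of the attracting $2$-cycle and the fact that everything except the fixed point and its preimages converges to it. You instead work directly with $f$ and give a self-contained argument: explicit formulas $p=(1-k)/(1+rk)$, $q=(1+r)/(1+rk)$ for the cycle (these are correct, and both lie in $[1-k,1]$ as you indicate), the multiplier $-rk$ with $|rk|<1$, and an itinerary analysis showing the only expanding branch of $f^2$ is the one with both iterates nonnegative, i.e.\ $[0,1/k]$, on which $f^2$ is linear with slope $k^2$ and fixed point $x^*$, so every point other than $x^*$ escapes; meanwhile $[1-k,0]$ is mapped into itself by $f^2$ (since $r<1/k$ forces $f([1-k,0])\subset(0,1]$) with contraction ratio $rk$, which pins down the limit as $\{p,q\}$. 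What your approach buys is independence from the external reference; what the paper's buys is brevity. One caution: your phrase ``after at most one or two steps an orbit avoiding $x^*$ enters the region where the two-step map is a contraction'' is not literally true, since a point near $x^*$ can linger in $[0,1/k]$ for arbitrarily many iterates; but your final paragraph supplies the correct repair (the linear expansion $|f^{2n}(x)-x^*|=k^{2n}|x-x^*|$ forces escape unless $x=x^*$), so in a written-up version you should drop the ``one or two steps'' claim and rely only on that unboundedness argument.
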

 
\begin{proof}
First from Lemma \ref{lem1}, since $1/k<k/(k-1)$ when $k>1$,
the interval $[1-k,1]$ is invariant under $f$ and for all real $x$,
there exist $n\ge 0$ such that $f^n(x)\in [1-k,1]$.
So we may restrict $f$ to $[1-k,1]$
and study $g=h^{-1}fh:[0,1]\to [0,1]$ as defined in \eqref{2}.
Then 
\[ b-(1-a+a^2)=\frac{(k-1)(1-rk)}{k^2}>0\]
and it follows from the discussion
in Section 2 of \cite{B} that $g$ has an attracting $2-$cycle and 
all orbits in $[0,1]$ are attracted to this 
$2-$cycle except the fixed point $c=1/(2-a)$ and its preimages. 
So $f=hgh^{-1}$ has an attracting $2-$cycle and all orbits in $[1-k,1]$
are attracted to this $2-$cycle except $x^{*}=h(c)=\frac{1}{k+1}$ and its preimages.
\end{proof}

\begin{remark} Our proof comes from \cite{B}.
The same result is shown in \cite{SAG}; 
see page 590--591. See also Theorem 4.1 (f) in \cite{LT}.
This case is also mentioned in (1) on the first page of \cite{ITN}.
\end{remark}

\subsection{The attractor is a chaotic interval $[1-k,1]$}

\begin{proposition}\label{prop3}
Let $f$ be as in \eqref{1} and $I$ as in Lemma \ref{lem1}. 
Suppose $k>1$ and either $\max\{1,k/(k^2-1)\}<r<k/(k-1)$ or $k/(k^2-1)<r<1/(k-1)$.
Then $[1-k,1]$ is a chaotic attractor for $f$ in {\rm int}$(I)$.
\end{proposition}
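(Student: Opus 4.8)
The plan is to reduce everything, via Lemma \ref{lem1}, to one strong property of the conjugate map. In both parameter ranges we have $k>1$ and $r<k/(k-1)$ (in the second range $r<1/(k-1)<k/(k-1)$), so Lemma \ref{lem1}(ii),(iii) shows that $[1-k,1]$ is forward invariant under $f$ and absorbs every point of $\mathrm{int}(I)$, and Lemma \ref{lem1}(iv) lets us replace $f|_{[1-k,1]}$ by the conjugate map $g$ of \eqref{2} on $[0,1]$, with $a=1-1/k$ and $b=1-ra$. It therefore suffices to show that $g$ is \emph{topologically exact} on $[0,1]$: for every nondegenerate closed interval $J\subseteq[0,1]$ there is an $n\ge1$ with $g^{n}(J)=[0,1]$. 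This single property yields all of Devaney chaos: topological transitivity is immediate, density of periodic points follows from the standard observation that $g^{n}(J)\supseteq J$ forces a fixed point of $g^{n}$ in $J$ (apply the intermediate value theorem to $g^{n}(x)-x$), and sensitive dependence is then automatic.

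The engine is that the right branch of $g$ is already onto: since $g$ is monotone on $[a,1]$ with $g(a)=1$ and $g(1)=0$, we have $g([a,1])=[0,1]$, whence also $g([0,1])=[0,1]$. So it is enough to produce, for each nondegenerate $J$, an iterate with $g^{n}(J)\supseteq[a,1]$; then $g^{n+1}(J)=[0,1]$. First I would show that some $g^{i}(J)$ contains the turning point $a$ in its interior. As long as it does not, $g^{i}(J)$ lies in one of the two linear branches, and $g$ multiplies its length by $r$ (on $[0,a]$) or by $k$ (on $[a,1]$). In the first range $r>1$ and $k>1$, so every step expands; in the second range one checks $b>a$ (this is equivalent to $r<1/(k-1)$), so an interval in $[0,a]$ is always mapped into $[a,1]$, forcing an expanding $k$-step after every $r$-step, and since $rk>1$ (because $r>k/(k^{2}-1)>1/k$) each passage $[0,a]\to[a,1]$ multiplies the length by $rk>1$. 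Either way $|g^{i}(J)|$ grows geometrically, so it must eventually exceed $\max\{a,1-a\}$, at which point $g^{i}(J)$ no longer fits inside a single branch and $a$ lies in its interior.

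Next I would pass from ``$a$ in the interior'' to ``$\supseteq[a,1]$''. Whenever $a\in\mathrm{int}(g^{i}(J))$, writing $g^{i}(J)=[p,q]$ gives $g^{i+1}(J)=[s,1]$ with $s=\min\{b+rp,\,k(1-q)\}$; if $s\le a$ we are done. Otherwise $[s,1]\subseteq[a,1]$, and the subsequent iterates again expand (by $k$ on $[a,1]$, by $rk>1$ across a passage $[a,1]\to[0,a]\to[a,1]$) until $a$ re-enters the interior, producing a new left endpoint $s'$ by the same rule. The whole question is whether the resulting sequence $s,s',s'',\dots$ can stay $\ge a$ forever, and this is exactly where the hypothesis $r>k/(k^{2}-1)$ is essential: it is precisely the condition under which $g\big([0,g(b)]\big)=g\big([0,r(k-1)]\big)=[\,k(1-r(k-1)),\,1\,]$ reaches strictly below $b=g(0)$, so the natural $g^{2}$-invariant band union $[0,r(k-1)]\cup[b,1]$ is destroyed. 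For $r\le k/(k^{2}-1)$ this disjoint union \emph{is} invariant (with the two pieces interchanged by $g$), which is exactly why in that regime the dynamics is chaotic only on a proper union of bands and is treated separately in Section 4. With this obstruction removed, a careful analysis of the endpoint recursion forces some $s$-value, hence some $g^{n}(J)$, down to $[a,1]$, completing the proof of exactness.

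The hard part will be this last stage, and inside it the bookkeeping for the left branch when $r\le1$: one must verify cleanly that under $r>k/(k^{2}-1)$ no $g^{2}$-invariant finite union of intervals containing $a$ can persist, equivalently that the one-dimensional recursion governing the left endpoints of the $[s,1]$-type intervals has no cycle in $(a,1)$. Stage 1 and the packaging of exactness into Devaney chaos are routine.
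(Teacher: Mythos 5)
Your reductions are all correct, and your framework is essentially the one the paper itself relies on: the paper's proof of this proposition merely verifies the inequalities $b<1-a$ (first range, from $r>1$) or $b>a$ (second range, from $r<1/(k-1)$), together with $b<1/(2-a)$, i.e.\ $r>k/(k^2-1)$, and then cites Propositions 2 and 4 of \cite{B}, whose combined content is exactly your claim that under these inequalities $g$ is topologically exact on $[0,1]$ and hence chaotic. Your Stage 1 (growth of $|g^i(J)|$ until the turning point $a$ is captured, including the case $r\le 1$ handled via $b>a$ and $rk>1$) is sound, and your identification of $r>k/(k^2-1)$ as precisely the condition under which $g^2([b,1])$ overshoots $b$ and destroys the two-band invariant set is correct and matches Lemma \ref{lem2}(ii) of the paper.

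The genuine gap is the one you flag yourself: Stage 2, the claim that the left endpoints of the $[s,1]$-type intervals produced after $a$ is captured cannot stay above $a$ forever, is asserted (``a careful analysis of the endpoint recursion forces some $s$-value down to $[a,1]$'') but not carried out, and it is not routine --- it is the entire mathematical content of the proposition beyond bookkeeping, and the place where the hypotheses must be used quantitatively. Concretely, from $[s,1]$ with $s>a$ one gets $[0,k(1-s)]$, which may itself lie entirely in the left branch (when $s\ge 1-a/k$), so the induced recursion $s\mapsto s'$ involves a variable number of intermediate steps; you must exclude the possibility that it has \emph{any} orbit confined to $(a,1)$, not merely that it has no periodic cycle there, and you must also cover the part of the first range where $r>1/(k-1)$, so that $b<a$ and your band-obstruction computation (which uses $g(b)=k(1-b)$) does not apply as written. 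As it stands the proposal is a correct outline of the argument behind Bassein's Proposition 4 with its decisive step left open; either supply that analysis or, as the paper does, cite \cite{B} for it.
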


\begin{proof} 
In Lemma \ref{lem1}, we saw the interval $[1-k,1]$ is invariant under $f$.
Moreover for all $x\in$ int$(I)$, there exists $n\ge 0$ such that
$f^n(x)\in [1-k,1]$. So we may restrict $f$ to $[1-k,1]$
and study $g=h^{-1}fh:[0,1]\to [0,1]$ as defined in \eqref{2}.

We see that
\[ b-a=\frac{(k-1)(1/(k-1)-r)}{k}>0 \]
when $r<1/(k-1)$, and when $r>1$
\[ \frac{1-b}{a}=r>1\]
so that $b<1-a$. Also
\[ b-c=b-\frac{1}{2-a}=-\frac{rk^2-r-k}{k(k+1)}<0.\]
Then it follows from Propositions 2 and 4 
in \cite{B} that $g$ is chaotic on $[0,1]$ and hence $f$ on $[1-k,1]$.
\end{proof}

\begin{remark} Our proof comes from \cite{B}.
Similar results are shown in \cite{SAG},
although we did not find them easily. They are included
in the results referring to ${\cal A}_{1}$. The first region here is called
$D^*$ in \cite{ITN} and the second region is their $D_1$.
These two regions are not separately considered in \cite{LT}. Also there
are no results about chaos in \cite{LT}, only results concerning
positive Lyapunov exponent.

\end{remark}

\subsection{All points escape except those on a chaotic Cantor set}

\begin{proposition}\label{prop4}
Let $f$ be as in \eqref{1} and $\alpha$, $\beta$ as in Lemma \ref{lem1}. 
Suppose $k>1$, $r>k/(k-1)$.
Then if $x$ is real, either $f^n(x)\to-\infty$ as $n\to\infty$
or $x\in \Lambda\subset [\alpha,\beta]$, where 
\[\Lambda=\{x\in [\alpha,\beta]: f^n(x)\in [\alpha,\beta]\;{\rm for}\; n\ge 0\}\]
is an invariant Cantor set on which $f$ is chaotic.
\end{proposition}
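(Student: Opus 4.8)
The plan is to exhibit $\Lambda$ as the invariant set of a standard expanding "horseshoe" construction on two subintervals of $[\alpha,\beta]$, and then quote (or quickly re-derive) the fact that such a set is a Cantor set on which $f$ is topologically conjugate to the full shift on two symbols, hence chaotic in the sense of Devaney. First I would record the elementary observation that under the hypotheses $k>1$, $r>k/(k-1)$ we have $\beta = r/(k(r-1)) < 1$, so $f(0)=1 > \beta$; thus the peak of the tent sticks out above $[\alpha,\beta]$ and there is an open subinterval around $0$ whose image already leaves $[\alpha,\beta]$ (and by Lemma \ref{lem1}(i) then escapes to $-\infty$). The preimage $f^{-1}(\beta)\cap[\alpha,\beta]$ consists of two points, say $p<0<q$, and I would set $I_0=[\alpha,p]$, $I_1=[q,\beta]$. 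On $I_0$ the map is affine with slope $r$ and on $I_1$ affine with slope $-k$; a direct check gives $f(I_0)=f(I_1)=[\alpha,\beta]\supset I_0\cup I_1$, while the complementary open middle interval $(p,q)$ maps outside $[\alpha,\beta]$.

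Next I would define $\Lambda$ exactly as in the statement and note $\Lambda = \bigcap_{n\ge 0} f^{-n}(I_0\cup I_1)$, since a point stays in $[\alpha,\beta]$ for all forward iterates iff it never lands in the "escape" middle interval. The key quantitative point is expansion: on $I_0$, $|f'|=r>1$; on $I_1$, $|f'|=k>1$. Hence every branch of $f^{-1}$ restricted to $I_0\cup I_1$ is a contraction, and the usual nested-intervals argument shows that for each one-sided symbol sequence $(s_0,s_1,s_2,\dots)\in\{0,1\}^{\mathbb N}$ the set $\bigcap_{n\ge 0} f^{-n}(I_{s_n})$ is a single point; this sets up a homeomorphism $\pi:\{0,1\}^{\mathbb N}\to\Lambda$ conjugating the shift $\sigma$ to $f|_\Lambda$. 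From this conjugacy the three Devaney conditions are immediate: periodic points are dense (eventually-periodic symbol sequences are dense), topological transitivity holds (a sequence listing all finite words is dense), and sensitive dependence holds (or is automatic once the other two hold on an infinite space, by the Banks et al. theorem). That $\Lambda$ is a Cantor set — nonempty, compact, perfect, totally disconnected — also follows since it is homeomorphic to $\{0,1\}^{\mathbb N}$; alternatively one sees directly that at each stage we remove a nonempty open middle piece from each surviving interval, and the lengths of surviving intervals shrink to $0$ because $\min\{r,k\}>1$.

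Finally, the dichotomy for arbitrary real $x$: if $x\notin[\alpha,\beta]$ then $f^n(x)\to-\infty$ by Lemma \ref{lem1}(i). If $x\in[\alpha,\beta]$ but $x\notin\Lambda$, then some iterate $f^N(x)$ lies in the open middle interval $(p,q)$, so $f^{N+1}(x)\notin[\alpha,\beta]$ and again $f^n(x)\to-\infty$. Otherwise $x\in\Lambda$. The main obstacle I anticipate is purely bookkeeping rather than conceptual: verifying cleanly that $f$ maps each of $I_0,I_1$ onto all of $[\alpha,\beta]$ (which needs $\beta<1$, i.e.\ precisely the hypothesis $r>k/(k-1)$, and also $f(\beta)=\alpha$, $f(\alpha)=\alpha$ from Lemma \ref{lem1}), and stating the shift-conjugacy argument at the right level of detail for an "elementary, complete-proofs" paper — in particular making the contraction/nested-interval step self-contained rather than merely citing symbolic dynamics. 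One should also take care that the escape-to-$-\infty$ claim for points leaving $[\alpha,\beta]$ genuinely covers both sides, which is exactly what Lemma \ref{lem1}(i) provides.
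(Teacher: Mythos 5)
Your proposal is correct and is essentially the paper's own argument: both identify two expanding affine branches covering the invariant interval (your $I_0=[\alpha,p]$, $I_1=[q,\beta]$ correspond, after the paper's affine conjugation to $[0,1]$, to its $I_0=[0,1/r]$, $I_1=[1-1/k,1]$), note that the middle interval escapes, and invoke the standard logistic-map-with-$\mu>4$ symbolic-dynamics construction to get the chaotic Cantor set. The only difference is that the paper first normalizes $[\alpha,\beta]$ to $[0,1]$ by an affine change of variables before quoting that construction, whereas you carry it out directly on $[\alpha,\beta]$.
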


\begin{proof}
If we define $h:(-\infty,\infty)\to(-\infty,\infty)$ by  
\[ h(x)=\frac{r+k}{k(r-1)}x-\frac{1}{r-1},\]
then $h([0,1])=[\alpha,\beta]$ and $g=h^{-1}\circ f\circ h$ is given by
\[ g(x)=\begin{cases} rx & (x\le a)\\ k(1-x) & (x\ge a),\end{cases}\]
where $a=k/(r+k)$ satisfies $g(a)=ra>1$. Then we define
\[ I_0=[0,1/r],\quad I_1=[1-1/k,1]\]
and
\[ \tilde\Lambda=\{x\in [0,1]: g^{n}(x)\in [0,1]\;{\rm for}\; n\ge 0\}.\]
As for the logistic map $\mu x(1-x)$ for $\mu>4$ (see pages 34--38 in \cite{D} or pages
98--101 and 131 in \cite{E}), we show $\tilde\Lambda$ is an invariant Cantor set on which 
$g$ is chaotic and such that if $x\notin \Lambda$, $g^n(x)\to -\infty$ as $n\to\infty$.
The proposition follows with $\Lambda=h(\tilde\Lambda)$.
\end{proof}

\begin{remark} 
Compare equation (14) on page 592 in \cite{SAG}. This region is not considered in \cite{B} or \cite{ITN}.
In Theorem 4.1 (j), it is only shown that almost all orbits tend to $-\infty$. The existence
of the Cantor set is not shown.
\end{remark}

\section{Chaotic band attractors}
 
Now when $k>1$, $k/(k^2-1)<r$ (equivalently $rk^2-k-r>0$) and $r<1/(k-1)$, we see
from Proposition \ref{prop3} that $f$ is chaotic on $[1-k,1]$.
In this section we suppose that 
\begin{equation}\label{s0eq} (k,r)\in S_{1}=\{(k,r): k>1, 1/k<r\le k/(k^{2}-1)\}.
\end{equation}

Now we state the main theorem.

\begin{theorem}\label{prop5} Let $f$ be as in \eqref{1} with $(k,r)\in S_{1}$ and
let $I$ be as in Lemma \ref{lem1}. Then there exists a decreasing sequence
$\{S_{p}\}^{\infty}_{p=1}$ of subsets of $S_{1}$
such that when  $p\ge 1$ and $(k,r)\in {\rm int}(S_p)$, 
\[  \tilde\Lambda(k,r,p)=\bigcup_{i=0}^{2^{p}-1}J_{p,i}=\bigcup_{i=0}^{2^{p}-1}f^{i}([f^{2^{p}}(1),1]).\]
is invariant
under $f$, all points in int$(I)$, except those on a finite set of periodic orbits or preimages of these orbits, are attracted to it,
and when $k\to 1$,  $\tilde\Lambda(k,r,p)\to  \{1-k,1\}$. More precisely, for $p\ge 1$:
\medskip

{\rm (i)} When $(k,r)\in {\rm int}(S_p)$,  $J_{p,i}\bigcap J_{p,j}$ is empty for $0\le i,j\le 2^p-1$ and 
$i\neq j$; also if $(k,r)\in {\rm int}(S_{p+1})$ and $0\le i\le 2^{p}-1$, $J_{p+1,i}$ and $J_{p+1,2^{p}+i}$ are 
subintervals of $J_{p,i}$ obtained by deleting a central open interval.  
\medskip

{\rm (ii)} When $(k,r)\in {\rm int}(S_p)$, 
the orbits of all points in int$(I)$, except those on the orbits of certain unstable periodic points 
$\tilde C_m$, $0\le m\le p-1$ or preimaqes of these orbits, land in $\tilde\Lambda(k,r,p)$.
\medskip

{\rm (iii)} if $(k,r)\in {\rm int}(S_p)\setminus S_{p+1}$, 
$f$ is chaotic on $\tilde\Lambda(k,r,p)$.
\end{theorem}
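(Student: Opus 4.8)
The plan is to reduce everything to the conjugated map $g$ on $[0,1]$ from \eqref{2} — so $\tilde\Lambda(k,r,p)$ should really be understood as the corresponding subset of $[0,1]$ — and to build the nested family $\{S_p\}$ and the band attractors by an inductive "bifurcation" construction driven by the orbit of the critical value $1$ (equivalently $g(a)$). The base case is Proposition \ref{prop3}: the boundary between "chaotic on all of $[1-k,1]$" and $S_1$ is the curve $r = k/(k^2-1)$, i.e.\ $rk^2-k-r=0$, so we set $S_1$ as in \eqref{s0eq}. The key object at each level is the point $c_p = f^{2^p}(1)$ (the $2^p$-th iterate of the critical value); one shows that on $\mathrm{int}(S_p)$ the interval $[c_p,1]$ is mapped into $J_{p,1}=f([c_p,1])$, then $J_{p,2}$, \ldots, and after $2^p$ steps $f^{2^p}$ maps $[c_p,1]$ back into itself as a (full, expanding) tent-like map; the disjointness of the $2^p$ iterates $J_{p,i}$ and the fact that $f^{2^p}\big|_{[c_p,1]}$ is again a skew tent map in the "good" parameter window is what lets the induction proceed. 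The defining inequality for passing from $S_p$ to $S_{p+1}$ is then exactly the condition that this return map $f^{2^p}$ on $[c_p,1]$ lies in the sub-window where $[c_p,1]$ splits (i.e.\ the analogue of $rk^2-k-r\le 0$ for the return map), and the condition for $\mathrm{int}(S_p)\setminus S_{p+1}$ is that the return map lies in the Proposition \ref{prop3} range, giving chaos on $[c_p,1]$ and hence on the union $\tilde\Lambda(k,r,p)$.

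I would carry out the steps in this order. First, establish the level-$p$ return map: show by induction that for $(k,r)\in\mathrm{int}(S_p)$ the iterates $J_{p,i}=f^i([c_p,1])$, $0\le i\le 2^p-1$, are pairwise disjoint, that $f(J_{p,i})=J_{p,i+1}$ cyclically (indices mod $2^p$), and that $R_p:=f^{2^p}\big|_{[c_p,1]}$ is, up to an affine conjugacy $h_p$, a skew tent map of the form \eqref{1}/\eqref{2} with new parameters $(k_p,r_p)$ that I compute explicitly; the nesting $S_{p+1}\subset S_p$ and the statement in (i) that $J_{p+1,i}, J_{p+1,2^p+i}\subset J_{p,i}$ comes from splitting $[c_p,1]$ via the level-$1$ construction applied to $R_p$. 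Second, prove (i) in full: disjointness of the $2^p$ bands on $\mathrm{int}(S_p)$ (from the inductive disjointness plus the tent-map geometry), and the "delete a central open interval" statement (this is precisely how $[c_{p+1},1]$ sits inside $[c_p,1]$ under the level-$1$ split of $R_p$, then pushed around by $f^i$). Third, prove invariance and the basin statement: $f(\tilde\Lambda(k,r,p))=\tilde\Lambda(k,r,p)$ is immediate from the cyclic structure; for "all points of $\mathrm{int}(I)$ are attracted", use Lemma \ref{lem1}(ii)–(iii) to get into $[1-k,1]$, then track how $[1-k,1]$ collapses toward $\tilde\Lambda(k,r,p)$ under iteration, the exceptions being the finitely many unstable periodic orbits $\tilde C_m$, $0\le m\le p-1$, that sit at the "tips" created by the successive splittings (these are the fixed/periodic points of the return maps $R_m$ that got stranded outside the new bands), together with their preimages. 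Fourth, prove (ii) by identifying the $\tilde C_m$ explicitly as those periodic points and showing every other point eventually enters $\tilde\Lambda(k,r,p)$. Fifth, prove (iii): on $\mathrm{int}(S_p)\setminus S_{p+1}$, $R_p$ falls in the Proposition \ref{prop3} parameter range, so $R_p$ is chaotic on $[c_p,1]$; transitivity, density of periodic points and sensitivity then transfer to the union $\tilde\Lambda(k,r,p)=\bigcup_i f^i([c_p,1])$ by the standard argument for maps that cyclically permute a finite collection of intervals. Finally, the limit $\tilde\Lambda(k,r,p)\to\{1-k,1\}$ as $k\to1$ follows by estimating $|c_p-1|\to0$ and noting all bands shrink, or by checking the explicit endpoint formulas.

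The main obstacle I expect is the inductive step establishing the return map $R_p=f^{2^p}\big|_{[c_p,1]}$ and, crucially, verifying that disjointness of the $2^{p+1}$ bands persists on all of $\mathrm{int}(S_{p+1})$ — i.e.\ that the new central gap deleted from each $J_{p,i}$ is genuinely nonempty and that iterating it around does not cause overlaps with other bands. This requires pinning down the exact algebraic inequalities cutting out $S_{p+1}$ inside $S_p$ (in terms of the orbit of $1$, or equivalently of $\mathrm{sign}(r_pk_p^2-k_p-r_p)$ and $\mathrm{sign}(r_p-1)$ for the conjugated parameters) and showing these inequalities are compatible — this is where the "explicitly described" claim has real content and where the bulk of the computation lives. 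A secondary technical point is bookkeeping the exceptional set: showing the periodic orbits $\tilde C_m$ and their preimages are exactly the points not attracted to $\tilde\Lambda(k,r,p)$, which needs a careful argument that the complement of the bands in $[1-k,1]$, minus these orbits, is "wandering into the bands." Everything else — the conjugacies, the endpoint formulas $\beta=r/(k(r-1))$ etc., the transfer of chaos through a cyclic permutation of intervals — is routine given the machinery already set up in Section 2 and the cited results from \cite{B} and \cite{D}.
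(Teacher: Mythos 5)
Your proposal follows essentially the same route as the paper: reduce to the conjugated map $g$ on $[0,1]$, then induct on $p$ by showing that the return map $g^{2^p}$ restricted to $[g^{2^p}(1),1]$ is affinely conjugate to a new skew tent map with renormalized parameters $(k_p,r_p)$, with $S_{p+1}$ cut out by the sign of $r_pk_p^2-k_p-r_p$, the exceptional periodic points $C_m$ being the unstable fixed points of the return maps stranded in the deleted central gaps, and chaos on ${\rm int}(S_p)\setminus S_{p+1}$ coming from the return map landing in the Proposition~\ref{prop3} range (this is the paper's Lemmas~\ref{lem2} and~\ref{lem3}). The obstacles you flag — verifying persistent disjointness of the bands and pinning down the algebraic inequalities via the recurrence $r_{p+1}=k_p^2$, $k_{p+1}=r_pk_p$ — are exactly where the paper's work lies, so the plan is sound and matches the published argument.
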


\subsection{Proof of Theorem \ref{prop5} for $g$}

First we prove this result for the map $g$ in \eqref{2}.
The lemma below is the key to the proof (and it is an important element in the next
section). In this lemma, we show that $[0,1]$ splits into three intervals $I_0\cup J\cup I_1
=[0,g(b)]\cup(g(b),b)\cup[b,1]$
such that $g(I_1)=I_0$ and $g(I_0)=I_1$. The middle open
interval $J$ contains an unstable fixed point and the orbits of all other points
in $J$ eventually land in $I_0\cup I_1$. This means that $g$ can no longer be chaotic
on $[0,1]$ but, if an additional condition is satisfied, we find that $g^2$ is chaotic
on $I_1$ and hence that $g$ is chaotic on $I_0\cup I_1$. Actually this lemma is
essentially the same as Lemma 1.1 in \cite{ITN}.

\begin{lemma}\label{lem2} Consider the map $g$ in \eqref{2} with $a=1-1/k$, $b=1-ra$
and suppose $(k,r)\in S_{1}=\{(k,r): k>1, 1/k<r\le k/(k^{2}-1)\}$. Then
\medskip

{\rm (i)} $C_0=k/(k+1)$ is an unstable fixed point of $g$;
\medskip{}

{\rm (ii)} $0<a<g(b)\le \frac{k}{k+1}\le b$, with strict inequality
when $rk^2-k-r<0$ (that is, $r<k/(k^2-1)$) and $g([b,1])=[0,g(b)]=[0,k(1-b)]$; 
\medskip

{\rm (iii)} $g^2([b,1])=[b,1]$ and
\[ g^2(x)=\begin{cases} B+R(x-b) & (b\le x\le A)\\ 
                        b+K(1-x) & (A\le x\le 1),\end{cases}\]
where $A=1-a/k$, $B=g^{4}(1)=k(1-k)+k^{2}b$, $R=k^2$, $K=rk$;
\medskip

{\rm (iv)} For each $x\in [0,1]$, $x\neq C_0=k/(k+1)$, $g^{n}(x)\in [0,g(b)]\cup[b,1]$
for sufficiently large $n>0$;
\medskip

{\rm (v)} If $r^2k^3-k-r>0$, then $g^2$ is chaotic on $[b,1]$ and 
$g$ is chaotic on $[0,g(b)]\cup[b,1]$.
\end{lemma}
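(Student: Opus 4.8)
The plan is to verify (i)--(iv) by direct computation and then derive (v) from the standard characterization of chaos for piecewise-linear expanding maps together with the conjugacy in (iii). For (i), I would simply solve $\gamma + rx$, $k(1-x)$... more precisely, the fixed point of the second branch: $k(1-C_0)=C_0$ gives $C_0=k/(k+1)$; since $C_0\ge b$ must be checked (part of (ii)), and the slope there is $-k$ with $|-k|=k>1$, the fixed point is unstable. For (ii), the claim $0<a<g(b)$ and $g(b)\le C_0 \le b$ reduces, after substituting $a=1-1/k$ and $b=1-ra=1-r(k-1)/k$, to polynomial inequalities in $k,r$ that hold exactly on $S_1$; the condition $g(b)=k(1-b)$ just uses $b\ge a$ (which follows from $r\le k/(k-1)$, true on $S_1$) so that $g$ on $[b,1]$ is the right branch. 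The equality $g([b,1])=[0,g(b)]$ follows from monotonicity of the right branch together with $g(1)=0$ and $g(b)\le 1$ (the latter from $g(b)\le C_0\le 1$).

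For (iii), since $g([b,1])=[0,g(b)]\subset [0,a]$ (using $g(b)\le C_0$ but we actually need $g(b)\le a$? no — we need the image to lie in $[0,a]$ for $g$ to act by the left branch on it; this requires $g(b)\le a$, which is exactly $k(1-b)\le a$, equivalent to $b\ge 1-a/k = A$... hmm, that's not automatic). Let me restructure: the point is that $[0,g(b)]$ may straddle $a$, so $g^2$ on $[b,1]$ is computed by following $g$ (right branch) then $g$ again, and the breakpoint in $g^2$ occurs where $g(x)=a$, i.e. $k(1-x)=a$, i.e. $x=1-a/k=A$. For $b\le x\le A$ we have $g(x)\ge a$, so $g^2(x)=k(1-k(1-x))=k(1-k)+k^2 x$, which at $x=b$ equals $B:=k(1-k)+k^2 b$ and has slope $R=k^2$; for $A\le x\le 1$ we have $g(x)\le a$ so $g^2(x)=b+r\,g(x)\cdot(\text{from left branch})$ — more carefully, the left branch is $b+rx$, so $g^2(x)=b+r\,k(1-x)$, with slope $-rk=-K$ and value $b$ at $x=1$. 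So $g^2$ maps $[b,1]$ onto $[b,1]$ as a skew tent map with breakpoint $A$, and I should also record $B=g^2(A)=g^4(1)$: indeed $g^4(1)=g^2(g^2(1))=g^2(b)=B$, and $B$ is the peak value. This matches the stated formulas. The identity $B=g^4(1)$ just needs $g^2(1)=b$, which is $g(g(1))=g(0)=b$.

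For (iv), I would argue exactly as the lemma's preamble suggests: $J=(g(b),b)$ contains the fixed point $C_0$, and on $J$ the map $g$ agrees with the right branch $k(1-x)$ near $C_0$ but the key is that $g(J)$ — one checks $g$ restricted to $[g(b),b]$ expands distances from $C_0$ by a factor $\ge \min\{r,k\}\cdot(\text{something})$... actually the clean argument: for $x\in J$ with $x\ne C_0$, $|g(x)-C_0|$; since the relevant slopes on $J$ are among $\{r,-k\}$ but $r<1$ possibly, one must be careful. The correct approach is: show $g([g(b),C_0])\supset[C_0,b]$ and vice versa (or rather that iterates leave any compact subinterval of $J$ around $C_0$), using that $g^2$ on a neighborhood of $C_0$ within $J$ has a branch structure with all relevant slopes exceeding $1$ in absolute value — the worst case being slope $rk$, and $rk>1$ fails on part of $S_1$... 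I expect this to be the main obstacle: one needs $J$ to be "repelling" which on $S_1$ requires a combined estimate, perhaps using $g^2$ and the bound $r>1/k$ so that $rk>1$, giving the needed expansion on the branch of $g^2$ through $C_0$. Once $rk>1$ (which holds on $S_1$ since $r>1/k$), the fixed point $C_0$ of $g^2$ has multiplier $k^2$ or $(rk)^2$, both $>1$, so a standard escape argument (every point of a punctured neighborhood eventually exits, then lands in $[0,g(b)]\cup[b,1]$ since $g(J\setminus\{C_0\})$ eventually reaches the complement of $J$) completes (iv).

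Finally (v): assuming $r^2k^3-k-r>0$, I would apply Propositions 2 and 4 of \cite{B} (as in the proof of Proposition \ref{prop3}) to the skew tent map $g^2|_{[b,1]}$ from (iii), after renormalizing it to the standard form on $[0,1]$ — its parameters are $R=k^2$ and $K=rk$, and the chaos condition from \cite{B} translates, after substituting $b$, into precisely $r^2k^3-k-r>0$ (the peak $B=g^4(1)$ must lie to the left of the relevant fixed point of $g^2$, which is the inequality one needs to reduce). That gives $g^2$ chaotic on $[b,1]$; then since $g$ maps $[b,1]$ and $[0,g(b)]$ onto each other homeomorphically (by (ii)), transitivity, density of periodic points, and sensitive dependence all transfer to $g$ on $[0,g(b)]\cup[b,1]$, so $g$ is chaotic there. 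The main obstacle, as noted, is part (iv) — establishing that the middle interval $J$ is genuinely repelling on all of $S_1$ rather than just where a single branch slope exceeds $1$ — which I would handle by passing to $g^2$ near $C_0$ and invoking $rk>1$.
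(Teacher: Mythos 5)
Your parts (i), (ii), (iii) and (v) follow essentially the paper's route: (v) in particular is exactly the paper's argument (conjugate $g^2|_{[b,1]}$ to standard form with parameters $R=k^2$, $K=rk$ and check that Bassein's condition reduces to $RK^2-K-R=k(r^2k^3-k-r)>0$). Two small remarks on (iii): you call $B$ the ``peak value'' and write $B=g^2(A)$, but $g^2(A)=1$ is the peak while $B=g^2(b)$ is the value at the left endpoint; and the assertion that $g^2$ maps $[b,1]$ \emph{onto} $[b,1]$ is not automatic from the formula --- the range of $g^2$ on $[b,1]$ is $[\min\{B,b\},1]$, so you must verify $g^2(b)\ge b$, which the paper does via $g^2(b)-b=(k^2-1)\bigl(b-\tfrac{k}{k+1}\bigr)\ge 0$, i.e.\ precisely the inequality $b\ge k/(k+1)$ you established in (ii).

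The genuine gap is in (iv), and you correctly sense it is the crux but then miss the one observation that makes it trivial. You proved in (ii) that $g(b)>a$. Hence the whole middle interval $J=(g(b),b)$ is contained in $(a,1]$, where $g$ is the single linear branch $k(1-x)$; there is no issue of ``relevant slopes among $\{r,-k\}$'' on $J$, and no need to pass to $g^2$ or to invoke $rk>1$. For every $x\in J$ one has $g(x)-C_0=k(1-x)-k(1-C_0)=k(C_0-x)$, so if $g^n(x)\in J$ for all $n\ge 0$ then $|g^n(x)-C_0|=k^n|x-C_0|\to\infty$, which is absurd for $x\ne C_0$; therefore the orbit must leave $J$, i.e.\ land in $[0,g(b)]\cup[b,1]$ (and that union is forward invariant by (ii)--(iii), giving the ``for sufficiently large $n$''). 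Your substitute argument --- local repulsion of $C_0$ under $g^2$ with multiplier $k^2$ or $(rk)^2$ --- only controls a neighbourhood of $C_0$ and does not by itself force every point of $J$ to exit, since you have not ruled out orbits wandering in $J$ through regions where (on your account) a contracting slope $r<1$ could act. So as written, (iv) is not proved; the fix is the single inequality $g(b)>a$ that you already had in hand.
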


\begin{proof} (i) Clearly $C_0=k/(k+1)$ is a fixed point of $g$ and it is unstable since
$g'(C_0)=-k<-1$.
\medskip{}

(ii) Clearly $a>0$. Also $b>a$ so that $g(b)=k(1-b)$ and $g([b,1])=[0,g(b)]$. Then
\[ g(b)-a=(kr-1)a>0,\quad \frac{k}{k+1}-g(b)=(k-1)\left(\frac{k}{k^2-1}-r\right)\ge 0\]
and
\[b-\frac{k}{k+1}= \frac{k-1}{k}\left(\frac{k}{k^2-1}-r\right)\ge 0\]
with strict inequality if $rk^2-k-r<0$.
\medskip

(iii) Note that on $[a,1]$,
\[ g^2(x)=\begin{cases} k(1-k)+k^2b+k^2(x-b) & (a\le x\le A=1-a/k)\\ b+rk(1-x) & (A\le x\le 1).\end{cases}\]
Now
\[ A-b=\frac{(rk-1)(k-1)}{k^2}>0.\]
So $a<b<A$. The range of $g^2$ on $[b,A]$ is $[g^2(b),1]$ and on
$[A,1]$ the range is $[b,1]$.
So $g^2([b,1])=[b,1]$ if and only if $g^2(b)\ge b$. However, using (ii),
\[ g^{2}(b)-b=k(1-k)+k^2b-b=(k^{2}-1)\left(b-\frac{k}{k+1}\right)\ge 0.\] 
Lastly note that 
$k(1-k)+k^{2}b=g^{2}(b)=g^{4}(1)$.
\medskip

(iv) Let $x\in (g(b),b)$, $x\neq C_0$. Then since $g(b)>a$, $g(x)=k(1-x)$.
So $g(x)-C_0=k(1-x)-k(1-C_0)=k(C_0-x)$. Then if $g^n(x)\in (g(b),b)$ for all $n\ge 0$,
we would have $|g^n(x)-C_0|=k^n|x-C_0|\to\infty$ as $n\to\infty$.
\medskip

(v) If $h(x)=b+(1-b)x$,
\[ (h^{-1}g^{2}h)(x)=\begin{cases} \tilde B+Rx &(0\le x\le \tilde A)\\
                                     K(1-x) &(\tilde A\le x\le 1),\end{cases}
\]
where $\tilde A=h^{-1}(A)=1-1/K$, $\tilde B=1-R\tilde A$. Then 
\[ \frac{1-\tilde B}{\tilde A}=R>1,\quad  \tilde B-\frac{1}{2-\tilde A}
=-\frac{RK^{2}-K-R}{K(K+1)}.\]
Since also $RK^{2}-K-R=k(r^2k^3-k-r)>0$. it follows from Propositions 4 and 2 in \cite{B}
that $h^{-1}g^{2}h$ is chaotic on $[0,1]$ so that $g^2$ is chaotic on $[b,1]$.
Then $g$ is chaotic on $[b,1]\cup g([b,1])=[0,g(b)]\cup[b,1]$. 

\end{proof}

In the previous lemma, we considered the behaviour of $g$ on $[0,1]$
and $g^2$ on $[b,1]$, where $b=g^2(1)$. In the lemma below, for $p\ge 1$,
we consider the behaviour of $g^{2^p}$ on $[B_p,1]$
and $g^{2^{p+1}}$ on $[B_{p+1},1]$, where $B_p=g^{2^p}(1)$.
We prove it by repeatedly applying the previous lemma.
\medskip

To state the new lemma, we need to define the sets $S_{p}$ mentioned in Theorem \ref{prop5}.
Let $r_p$, $k_p$
be defined by the recurrence relations
\[ r_{p+1}=k^2_p,\quad k_{p+1}=r_pk_p,\quad p\ge 0\]
where $r_0=r$, $k_0=k$. For $p\ge 1$, write
\[ S_{p}=\{(k,r): k>1, r>1/k, k_{p-1}^2r_{p-1}-k_{p-1}-r_{p-1}\le 0\}.\]
When $k>1$ and $kr>1$, it is easy to see that $k_p>1$, $r_p>1$ for $p\ge 1$ and hence that
$k_pr_p>1$ for $p\ge 0$. Then 
\[r_{p+1}k^{2}_{p+1}-k_{p+1}-r_{p+1}=k_{p}(r^2_{p}k^3_{p}-k_{p}-r_{p})
>k_{p}(r_{p}k^2_{p}-k_{p}-r_{p}).\]
So $r_{p+1}k^{2}_{p+1}-k_{p+1}-r_{p+1}\le 0$ implies that $r_{p}k^{2}_{p}-k_{p}-r_{p}<0$. 
Since it is easy to
see by induction that $r_p$ and $k_p$ are continuous functions of $(k,r)$ so that
\[ {\rm int}(S_{p})=\{(k,r): k>1, r>1/k, k_{p-1}^2r_{p-1}-k_{p-1}-r_{p-1}< 0\},\]
it follows that $S_{p+1}\subset$int$(S_p)$ for $p\ge 1$.

\begin{lemma} \label{lem3} Consider the map $g$ in \eqref{2} with $a=1-1/k$, $b=1-ra$.
 For $p\ge 0$, define
\begin{equation}\label{ABC} B_p=g^{2^p}(1),\quad A_p=1-(1-B_p)/k_p,\quad 
     C_p=\frac{B_{p}+k_{p}}{k_{p}+1}.\end{equation}
Then for $p\ge 0$, if $(k,r)\in S_{p+1}$, 
\medskip

{\rm (i)} $C_{p}$ is an unstable fixed point of $g^{2^{p}}$;
\medskip

{\rm (ii)} 
\[B_{p}<A_{p}<g^{2^{p}}(B_{p+1})\le C_{p}\le B_{p+1}\]
with the last two inequalities strict when $(k,r)\in{\rm int}(S_{p+1})$ and 
\[g^{2^{p}}([B_{p+1},1])=[B_{p},g^{2^{p}}(B_{p+1})];\]

{\rm (iii)} $g^{2^{p+1}}([B_{p+1},1])=[B_{p+1},1]$
and
\[ g^{2^{p+1}}(x)=\begin{cases} B_{p+2}+r_{p+1}(x-B_{p+1}) &(B_{p+1}\le x\le A_{p+1})\\
                            B_{p+1}+k_{p+1}(1-x) &(A_{p+1}\le x\le 1).\end{cases}
\]

{\rm (iv)} For each $x\in [B_{p},1]$, $x\neq C_{p}$, 
$g^{n2^{p}}(x)\in [B_{p},g^{2^{p}}(B_{p+1})]\cup[B_{p+1},1]$
for sufficiently large $n>0$.
\medskip

{\rm (v)} When $(k,r)\in{\rm int}(S_{p+1})$, 
the intervals $g^i([B_{p+1},1])$, $i=0,\ldots, 2^{p+1}-1$,
are disjoint and for $i=0,\ldots,2^p-1$, $g^i([B_{p+1},1])$ and $g^{2^p+i}([B_{p+1},1])$ are
contained in $g^i([B_{p},1])$ and are obtained by removing a central open
interval, which contains $g^i(C_p)$, from $g^i([B_{p},1])$.
\medskip

{\rm (vi)} When $(k,r)\in{\rm int}(S_{p+1})$, the orbits of all points in $[0,1]$, 
except for the orbits of $C_q$,
$0\le q\le p$, and their preimages, eventually land in the union $\Lambda(k,r,p+1)$ of
the disjoint intervals $g^i([B_{p+1},1])$, $i=0,\ldots, 2^{p+1}-1$. 
\medskip

{\rm (vii)} If $(k,r)\in{\rm int}(S_{p+1})\setminus S_{p+2}$, then 
$g^{2^{p+1}}$ is chaotic on $[B_{p+1},1]$ and therefore $g$ is chaotic on 
$\Lambda(k,r,p+1)$. 
\end{lemma}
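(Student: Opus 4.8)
The plan is to prove the lemma by induction on $p$, using Lemma \ref{lem2} as both the base case and the inductive engine. For $p=0$, statements (i)--(vii) are precisely Lemma \ref{lem2}: indeed $B_0=g(1)=1$ wait—one should first reconcile notation, observing $g^{2^0}(1)=g(1)$; but note $g(1)=k(1-1)=0$, so actually the relevant interval at level $p$ is $[B_p,1]$ with $B_0$ interpreted via $B_1=g^2(1)=b$, $A_1=1-a/k=A$, $C_1=(b+k)/(k+1)$, and one checks these match the quantities $b$, $A$, $C_0$ appearing in Lemma \ref{lem2}(ii)--(v) after the shift of index. So the genuine content is the inductive step: assuming the conclusions hold at level $p$ (in particular that $g^{2^{p}}$ restricted to $[B_p,1]$ is, after the affine change of coordinates $h_p(x)=B_p+(1-B_p)x$, a skew tent map of the form \eqref{2} with slope parameters $k_p$ and $r_p$, and that $(k,r)\in S_{p+1}$ translates into $(k_p,r_p)\in S_1$), apply Lemma \ref{lem2} to this conjugated map. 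The recurrence $r_{p+1}=k_p^2$, $k_{p+1}=r_pk_p$ is exactly the transformation of slopes recorded in Lemma \ref{lem2}(iii) ($R=k^2$, $K=rk$), and the condition $r_p^2k_p^3-k_p-r_p>0$ from Lemma \ref{lem2}(v) is exactly $r_{p+1}k_{p+1}^2-k_{p+1}-r_{p+1}>0$, i.e. $(k,r)\notin S_{p+2}$.

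First I would establish the structural claim underpinning the induction: that $g^{2^{p}}$ on $[B_p,1]$, conjugated by the affine map sending $[0,1]$ onto $[B_p,1]$, equals a skew tent map $\tilde g_p$ of type \eqref{2} with parameters $k_p,r_p$; this follows from Lemma \ref{lem2}(iii) applied at each stage, since that part gives precisely the formula for $g^{2^{p+1}}$ on $[B_{p+1},1]$ in terms of $g^{2^p}$ on $[B_p,1]$ (the kink at $A_{p+1}$, the left slope $r_{p+1}$, the right slope $-k_{p+1}$, and the value $B_{p+2}$ at the left endpoint). Granting this, parts (i), (iii) are immediate from Lemma \ref{lem2}(i),(iii), part (ii) from Lemma \ref{lem2}(ii) (the chain of inequalities $B_p<A_p<g^{2^p}(B_{p+1})\le C_p\le B_{p+1}$ is the image under $h_p$ of $0<a<g_p(b_p)\le k_p/(k_p+1)\le b_p$), and part (iv) from Lemma \ref{lem2}(iv) applied to $\tilde g_p$ and then pulled back. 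Part (vii) is then Lemma \ref{lem2}(v) applied to $\tilde g_p$, together with the observation that $g^{2^{p+1}}$ chaotic on $[B_{p+1},1]$ forces $g$ chaotic on the union $\bigcup_{i=0}^{2^{p+1}-1} g^i([B_{p+1},1])$, since $g$ cyclically permutes these $2^{p+1}$ intervals (each mapped homeomorphically onto the next, with $g^{2^{p+1}}$ returning the last to the first) and conjugacy-type arguments transfer transitivity, density of periodic points, and sensitive dependence across such a cyclic tower.

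For part (v), the disjointness of the $2^{p+1}$ intervals $g^i([B_{p+1},1])$ and the nesting $g^i([B_{p+1},1])\cup g^{2^p+i}([B_{p+1},1])\subset g^i([B_p,1])$: by the induction hypothesis (v) the $2^p$ intervals $g^i([B_p,1])$ ($0\le i\le 2^p-1$) are pairwise disjoint, and within each we have $[B_{p+1},1]$ and $g^{2^p}([B_{p+1},1])=[B_p,g^{2^p}(B_{p+1})]$ sitting at the two ends of $[B_p,1]$ with a gap around $C_p$ removed (this is exactly part (ii)); applying the homeomorphisms $g^i$ ($0\le i\le 2^p-1$) and using that $g^{2^p}$ carries $[B_{p+1},1]$ onto the ``other half'' gives the claimed pattern, and disjointness of the $2^{p+1}$ new intervals follows because distinct ones either lie in distinct (hence disjoint) level-$p$ intervals or are the two disjoint halves of the same one. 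Part (vi) then follows by combining (iv) and (v) with the induction hypothesis (vi): a point not eventually trapped in the level-$(p+1)$ union either fails to be eventually trapped at level $p$—whence it lies on the orbit of some $C_q$, $q\le p-1$, or a preimage—or it is trapped in some $g^i([B_p,1])$ but its $g^{2^p}$-orbit (in the coordinate of $[B_p,1]$) avoids the two end-subintervals, which by (iv) forces it onto the orbit of $C_p$ or a preimage thereof.

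The main obstacle will be the bookkeeping in the structural claim and in part (v): one must carefully track how the conjugating affine maps $h_p$ compose, verify that the ``central gap'' removed at level $p+1$ from $g^i([B_p,1])$ really is an interval containing $g^i(C_p)$ (not merely that something is removed), and confirm that the images $g^i([B_{p+1},1])$ for $0\le i\le 2^{p+1}-1$ are genuinely the $2^{p+1}$ distinct pieces and not fewer—this rests on $g$ mapping each of the $2^p$ level-$p$ intervals homeomorphically onto the next, which itself needs the observation that each such interval lies entirely on one side of the kink of the relevant iterate. Once that combinatorial/topological scaffolding is in place, every analytic inequality needed is already supplied verbatim by Lemma \ref{lem2} under the substitution $(k,r)\mapsto(k_p,r_p)$, so no new estimates arise.
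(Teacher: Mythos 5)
Your plan follows essentially the same route as the paper: induction on $p$, with Lemma \ref{lem2} supplying both the base case and, after the affine conjugation $h(x)=B_{p+1}+(1-B_{p+1})x$, the entire inductive step — including the identification $R=k_p^2=r_{p+1}$, $K=r_pk_p=k_{p+1}$ from Lemma \ref{lem2}(iii), the matching of $r_p^2k_p^3-k_p-r_p>0$ with $(k,r)\notin S_{p+2}$, and the same disjointness and ``landing'' arguments for (v) and (vi). The only slip is in your base-case bookkeeping: since $B_0=g^{2^0}(1)=g(1)=0$, the $p=0$ statements reduce to Lemma \ref{lem2} directly (with $[B_0,1]=[0,1]$, $A_0=a$, $C_0=k/(k+1)$, $B_1=b$) and no index shift is needed, and your identification $C_1=(b+k)/(k+1)$ should read $C_1=(b+rk)/(rk+1)$ — but neither point affects the validity of the argument.
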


\begin{proof} For $p\ge 0$, if $r_{p}k^{2}_{p}-k_{p}-r_{p}\le 0$, we prove (i)-(vii) hold
by induction on $p$. 
\medskip

For $p=0$, the statements (i)-(vii) follow from Lemma \ref{lem2}.
\medskip

Assuming the statements (i)-(vii) are true for $p\ge 0$, we prove them for $p+1$.
So we are assuming $r_{p+1}k^{2}_{p+1}-k_{p+1}-r_{p+1}\le 0$, which implies
that $r_{p}k^{2}_{p}-k_{p}-r_{p}< 0$.
\medskip

So by the induction hypothesis,
\[ g^{2^{p+1}}([B_{p+1},1])=[B_{p+1},1]\]
and
\[ g^{2^{p+1}}(x)=\begin{cases} B_{p+2}+r_{p+1}(x-B_{p+1}) &(B_{p+1}\le x\le A_{p+1})\\
                            B_{p+1}+k_{p+1}(1-x) &(A_{p+1}\le x\le 1).\end{cases}
\]
Observe since 
\[g^{2^{p+1}}(A_{p+1})=B_{p+2}+r_{p+1}(A_{p+1}-B_{p+1})
=B_{p+1}+k_{p+1}(1-A_{p+1})=1,\] 
it follows on elimination of $A_{p+1}$ that
\begin{equation} \label{speqn}(1-B_{p+2})=r_{p+1}(1-1/k_{p+1})(1-B_{p+1}).\end{equation}
Now if $h(x)=B_{p+1}+(1-B_{p+1})x$,
\[ G(x)=(h^{-1}g^{2^{p+1}}h)(x)=\begin{cases} b_{p+1}+r_{p+1}x &(0\le x\le a_{p+1})\\
                                     k_{p+1}(1-x) &(a_{p+1}\le x\le 1),\end{cases}
\]
where  
\begin{equation}\label{bp} a_{p+1}=h^{-1}(A_{p+1}),\quad b_{p+1}=h^{-1}(B_{p+2})\end{equation}
but also 
\[a_{p+1}=1-1/k_{p+1},\quad b_{p+1}=1-r_{p+1}a_{p+1}\]
since 
\[ G(a_{p+1})=(h^{-1}g^{2^{p+1}}h)(a_{p+1})=h^{-1}(g^{2^{p+1}}(A_{p+1}))=h^{-1}(1)=1.\]
$G$ satisfies the conditions of Lemma \ref{lem2} since $k_{p+1}>1$,
$k_{p+1}r_{p+1}>1$ and $r_{p+1}k^{2}_{p+1}-k_{p+1}-r_{p+1}\le 0$.
\medskip

Now we prove (i) for $p+1$. We know from Lemma \ref{lem2} (i) applied to $G$ that $k_{p+1}/(k_{p+1}+1)$ is
an unstable fixed point of $G$. It follows that
\begin{equation}\label{cpeq} h(k_{p+1}/(k_{p+1}+1))
=(B_{p+1}+k_{p+1})/(k_{p+1}+1)=C_{p+1}\end{equation}
is an unstable fixed point of $hGh^{-1}=g^{2^{p+1}}$. So (i) is proved for $p+1$.
\medskip

Now we prove (ii) for $p+1$. We know from Lemma \ref{lem2} (ii) applied to $G=h^{-1}g^{2^{p+1}}h$
that $G([b_{p+1},1])=[0,G(b_{p+1})]$ and so, using \eqref{bp},
\[\begin{array}{rl}
g^{2^{p+1}}([B_{p+2},1])&=(hGh^{-1})([B_{p+2},1])=(hG)([b_{p+1},1])\\ \\
&=h([0,G(b_{p+1})])=[B_{p+1},g^{2^{p+1}}(h(b_{p+1}))]\\ \\
&=[B_{p+1},g^{2^{p+1}}(B_{p+2})].\end{array}\] 
Also, again from Lemma \ref{lem2} (ii),
\[ 0<a_{p+1}<G(b_{p+1})\le \frac{k_{p+1}}{k_{p+1}+1}\le b_{p+1},\]
with strict inequality when $r_{p+1}k^{2}_{p+1}-k_{p+1}-r_{p+1}<0$, so that, applying $h$,
\[ B_{p+1}<A_{p+1}<g^{2^{p+1}}(B_{p+2})\le  \frac{B_{p+1}+k_{p+1}}{k_{p+1}+1} \le B_{p+2}\]
with strict inequality when $r_{p+1}k^{2}_{p+1}-k_{p+1}-r_{p+1}<0$. This proves (ii) for $p+1$.
\medskip

Next we prove (iii) for $p+1$. By Lemma \ref{lem2} (iii) applied to $G$,  
\begin{equation}\label{Geq}G^2([b_{p+1},1])=[b_{p+1},1]\end{equation}
and
\[ G^2(x)=\begin{cases} B+R(x-b_{p+1}) & (b_{p+1}\le x\le A)\\ 
                        b_{p+1}+K(1-x) & (A\le x\le 1),\end{cases}\]
where $A=1-a_{p+1}/k_{p+1}$,
\begin{equation}\label{newone} B=G^2(b_{p+1})=(h^{-1}g^{2^{p+2}}h)(b_{p+1})=h^{-1}(g^{2^{p+2}}(B_{p+2}))=h^{-1}(B_{p+3}),\end{equation} 
$R=k_{p+1}^2=r_{p+2}$, $K=r_{p+1}k_{p+1}=k_{p+2}$. Then \eqref{Geq} implies
\[ g^{2^{p+2}}([B_{p+2},1])=(hG^2)([b_{p+1},1])=h([b_{p+1},1])=[B_{p+2},1],\]
and using \eqref{bp} and \eqref{newone},
\[ g^{2^{p+2}}(x)=(hG^2h^{-1})(x)
=\begin{cases} B_{p+3}+r_{p+2}(x-B_{p+2}) & (B_{p+2}\le x\le h(A))\\ 
                        B_{p+2}+k_{p+2}(1-x) & (h(A)\le x\le 1),\end{cases}\]
where, using  \eqref{speqn}, $h(A)=B_{p+1}+(1-B_{p+1})A=1-(1-B_{p+1})a_{p+1}/k_{p+1}
=1-(1-B_{p+2})/(r_{p+1}k_{p+1})=1-(1-B_{p+2})/k_{p+2}=A_{p+2}$.
\medskip

This completes the proof of (iii) for $p+1$.
\medskip

We also know from Lemma \ref{lem2} (iv) applied to $G=h^{-1}g^{2^{p+1}}h$ that if $x\in [0,1]$,
$x\neq k_{p+1}/(k_{p+1}+1)$, then $G^n(x)\in [0,G(b_{p+1})]\cup [b_{p+1},1]$
for some $n\ge 0$. This means that if $x\in [B_{p+1},1]$ and 
$x\neq h(k_{p+1}/(k_{p+1}+1))=C_{p+1}$ (see \eqref{cpeq}), 
then $g^{n2^{p+1}}(x)\in [B_{p+1},g^{2^{p+1}}(B_{p+2})]\cup [B_{p+2},1]$
for some $n\ge 0$. This proves (iv) for $p+1$.
\medskip

Now we show (v) for $p+1$. By the induction hypothesis, the intervals $g^i([B_{p+1},1])$, 
$i=0,\ldots, 2^{p+1}-1$ are disjoint. Then for $i=0,\ldots, 2^{p+1}-1$,
\[\begin{array}{l}
g^i([B_{p+2},1])\subset g^i([B_{p+1},1]),\\ \\ 
g^{2^{p+1}+i}([B_{p+2},1])=g^i([B_{p+1},g^{2^{p+1}}(B_{p+2})])\subset g^i([B_{p+1},1]).
\end{array}\]
Since $g^{2^{p+1}}([B_{p+1},1])=[B_{p+1},1]$, we have $a\in g^{2^{p+1}-1}([B_{p+1},1])$. 
Thus
$g$ is one to one on $g^i([B_{p+1},1])$ for $0\le i<2^{p+1}-1$
and hence $g^i$ is one to one on 
\[ [B_{p+1},1]=[B_{p+1},g^{2^{p+1}}(B_{p+2})]\cup(g^{2^{p+1}}(B_{p+2}),B_{p+2})\cup[B_{p+2},1] \]
for the same $i$. It follows that $g^i([B_{p+2},1])$ and 
\[g^{2^{p+1}+i}([B_{p+2},1])=g^i([B_{p+1},g^{2^{p+1}}(B_{p+2})])\]
are contained in $g^i([B_{p+1},1])$, are disjoint and each contains an
endpoint of $g^i([B_{p+1},1])$. Hence $g^i([B_{p+2},1])$ and $g^{2^{p+1}+i}([B_{p+2},1])$
are what remains after removing a central open interval from $g^i([B_{p+1},1])$.
Also since $g^{2^{p+1}}(B_{p+2})< C_{p+1}<B_{p+2}$
when $r_{p+1}k^{2}_{p+1}-k_{p+1}-r_{p+1}<0$ and $g^i$ is one to one on $[B_{p+1},1]$ for
$0\le i <2^{p+1}-1$, it follows that
$g^i(C_{p+1})$ is in the central open interval for $0\le i \le 2^{p+1}-1$. Then (v) follows for $p+1$.
\medskip

Now we prove (vi) for $p+1$. Suppose $x\in [0,1]$ does not lie on the orbit of $C_m$ for $0\le m\le p+1$.
Then by the induction hypothesis, there is some $n\ge 0$ such that  
$g^n(x)\in \Lambda(k,r,p+1)$. Hence there exists $i$, $0\le i<2^{p+1}$,
such that $g^n(x)\in g^i([B_{p+1},1])$ so that
$g^{n+2^{p+1}-i}(x)\in g^{2^{p+1}}([B_{p+1},1])=[B_{p+1},1]$.
Since $g^{n+2^{p+1}-i}(x)\neq C_{p+1}$, it follows from (iv) for $p+1$ as we have just proved
that there exists $m>0$ such that 
\[ g^{m2^{p+1}+n+2^{p+1}-i}(x)\in [B_{p+1},g^{2^{p+1}}(B_{p+2})]\cup[B_{p+2},1]
=g^{2^{p+1}}([B_{p+2},1])\cup[B_{p+2},1]\]
which is contained in $\Lambda(k,r,p+2)$.
This proves (vi) for $p+1$.
\medskip

Finally we show (vii) for $p+1$. 
We know from Lemma \ref{lem2} (v) applied to $G^2$ that if $RK^2-K-R>0$,
then $G^2$ is chaotic on $[b_{p+1},1]$. So if $r_{p+1}k^{2}_{p+1}-k_{p+1}-r_{p+1}>0$, 
$g^{2^{p+2}}$ is chaotic on $h([b_{p+1},1])=[B_{p+2},1]$. It follows
that $g$ is chaotic on $\Lambda(k,r,p+2)$.
\medskip

This completes the induction proof that (i)-(vii) hold for $p\ge 1$.
\end{proof}

\begin{remark} \label{rem1} It follows from (v) and (vi) in Lemma \ref{lem3}
	that if $p\ge 0$ and $(k,r)\in{\rm int}(S_{p+1})$ then $\Lambda(k,r,p+1)\subset \Lambda(k,r,p)$.
	So for $p\ge 1$ and $(k,r)\in{\rm int}(S_{p})$, $\Lambda(k,r,p)\subset 
\Lambda(k,r,1)=I_{1,0}\cup I_{1,1}=[0,k(1-b)]\cup[b,1]=[0,rk-r]\cup[1-r+r/k,1]$
so that $\Lambda(k,r,p)\to \{0,1\}$ as $k\to 1$.
\end{remark}

\subsection{Proof of Theorem \ref{prop5}}

\begin{proof} First it follows from Lemma \ref{lem1} that if $x\in$ int$(I)$,
that there exists $m\ge 0$ such that $f^{n}(x)\in [1-k,1]$ for $n\ge m$.
Also $f$ maps $[1-k,1]$ into itself and on $[1-k,1]$,
$f=hgh^{-1}$, where $g:[0,1]\to [0,1]$ is as in \eqref{2} and $h(x)=1-k+kx$.
\medskip{}

Now suppose $p\ge 1$ and $(k,r)\in {\rm int}(S_{p})$. Then
\[ h(g^{i}([B_{p},1]))=f^{i}(h([B_{p},1])),\]
where, using \eqref{ABC},
\begin{equation}\label{beq} B_{p}=g^{2^{p}}(1)=h^{-1}(f^{2^{p}}(h(1)))=h^{-1}(f^{2^{p}}(1))\end{equation}
so that 
\[ h([B_{p},1])=[h(B_{p}),1]=[f^{2^{p}}(1),1].\]
Thus
\[ h(g^{i}([B_{p},1]))=f^i(h([B_{p},1]))=f^{i}([f^{2^{p}}(1),1])=J_{p,i}.\]
Then (i) follows from (v) in Lemma \ref{lem3}. 
\medskip{}

Now we see that 
\[\tilde\Lambda(k,r,p)=\bigcup_{i=0}^{2^{p}-1}J_{p,i}=\bigcup_{i=0}^{2^{p}-1}h(g^i([B_p,1]))
= h(\Lambda(k,r,p)), \]
where $\Lambda(k,r,p)$ is as in Lemma \ref{lem3}.
Since $\Lambda(k,r,p)$ is invariant under $g$, $\tilde\Lambda(k,r,p)$ is invariant under $f$,
and since from Remark \ref{rem1}, $\Lambda(k,r,p)\to \{0,1\}$ as $k\to 1$, it follows that
$\tilde\Lambda(k,r,p)\to h(\{0,1\})=\{1-k,1\}$ as $k\to 1$.
\medskip{}

Since the $g-$orbits of all points in $[0,1]$ except those on the orbits of the unstable periodic points $C_m$,  $0\le m\le p-1$, or their preimages, land in $\Lambda(k,r,p)$, it follows that
the $f-$orbits of all points in $[1-k,1]$ except those on the orbits of the unstable periodic points $\tilde C_m=h(C_{m})$,  $0\le m\le p-1$, or their preimages, land in $\tilde\Lambda(k,r,p)$.
Note, using \eqref{ABC} and \eqref{beq},
\[ h(C_{m})=1-k+kC_{m}=1-k+k\frac{B_{m}+k_{m}}{k_{m}+1}=\frac{f^{2^{m}}(1)+k_{m}}{k_{m}+1}.\]
Thus (ii) is proved.
\medskip{}

Finally (iii) follows directly from (vii) in Lemma \ref{lem3}. 
\end{proof}

\begin{remark}
In \cite{SAG}, Proposition 4.6 on page 612, the 
band attractors are described but it is not shown as we do here
that all points in int$(I)$, except those which  
lie on a finite set of periodic orbits or are preimages of these orbits, 
are attracted to the attractor. 
\medskip

In Section 5 in \cite{B}, where this parameter range is considered,
it is only proved that some power of the map exhibits chaos on some
interval and that there are no attracting periodic orbits.
The region is not divided into infinitely many subregions each
with a chaotic band attractor. The same comment applies to \cite{LT}.
In fact, in \cite{LT} this region is not separately considered.
\medskip

The region int$(S_p)\setminus S_{p+1}$ corresponds to $D^{(p+1)}_0$
in \cite{ITN}.
\end{remark}

\subsection{Geometry of the regions $S_{p}$}

Now we describe the geometry of the regions $S_p$. First we
derive formulae for $r_p$ and $k_p$.

\begin{lemma}\label{lem5}  The recurrence relations
\[ r_{p+1}=k^2_p,\quad k_{p+1}=r_pk_p,\]
where $r_0=r$, $k_0=k$, have the solution
\[ k_p=r^{\chi(p)/2}k^{\chi(p)+(-1)^p},\quad 
   r_p=r^{\chi(p)/2+(-1)^p}k^{\chi(p)},\]
with $\chi(p)=(2^{p+1}+2(-1)^{p+1})/3$. Also for $p\ge 0$,
\[ r_pk^2_p-k_p-r_p
=\begin{cases}
r^{\chi_(p)/2-1}k^{\chi(p)-1}[r^{\chi(p)}k^{2\chi(p)-1}-k-r]& (p\; {\rm odd})\\ \\
r^{\chi_(p)/2}k^{\chi(p)}[r^{\chi(p)+1}k^{2\chi(p)+2}-k-r]& (p\; {\rm even}).\end{cases}\]
\end{lemma}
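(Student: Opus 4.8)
The plan is to prove Lemma~\ref{lem5} by a straightforward induction on $p$, exploiting the fact that the recurrences $r_{p+1}=k_p^2$, $k_{p+1}=r_pk_p$ become linear once we pass to logarithms. Writing $u_p=\log r_p$ and $v_p=\log k_p$, the system becomes $u_{p+1}=2v_p$, $v_{p+1}=u_p+v_p$, a linear recurrence with matrix $\begin{pmatrix}0&2\\1&1\end{pmatrix}$. One could diagonalise this matrix (eigenvalues $2$ and $-1$), but it is cleaner simply to write $k_p=r^{e_p}k^{f_p}$, $r_p=r^{g_p}k^{h_p}$ for integer (or half-integer) exponent sequences, read off the recurrences $e_{p+1}=2e_p$ wait --- more precisely $g_{p+1}=2e_p$, $h_{p+1}=2f_p$ from $r_{p+1}=k_p^2$, and $e_{p+1}=g_p+e_p$, $f_{p+1}=h_p+f_p$ from $k_{p+1}=r_pk_p$, with initial data $e_0=0,f_0=1,g_0=1,h_0=0$. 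The claimed formulae assert $f_p=\chi(p)+(-1)^p$, $e_p=\chi(p)/2$, $g_p=\chi(p)/2+(-1)^p$, $h_p=\chi(p)$, where $\chi(p)=(2^{p+1}+2(-1)^{p+1})/3$.

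The main verification is then that $\chi$ satisfies the right recurrence. From $g_{p+1}=2e_p=\chi(p)$ and the claim $g_{p+1}=\chi(p+1)/2+(-1)^{p+1}$ we need $\chi(p+1)=2\chi(p)-2(-1)^{p+1}=2\chi(p)+2(-1)^p$; and from $f_{p+1}=h_p+f_p=\chi(p)+\chi(p)+(-1)^p=2\chi(p)+(-1)^p$ together with the claim $f_{p+1}=\chi(p+1)+(-1)^{p+1}$ we need $\chi(p+1)=2\chi(p)+(-1)^p-(-1)^{p+1}=2\chi(p)+2(-1)^p$, the same identity. So the whole thing reduces to checking the single recurrence $\chi(p+1)=2\chi(p)+2(-1)^p$ with $\chi(0)=(2-2)/3=0$, together with consistency of the two remaining relations $e_{p+1}=g_p+e_p$ and $h_{p+1}=2f_p$. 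That $\chi(p)=(2^{p+1}+2(-1)^{p+1})/3$ solves this is immediate: it is the sum of the particular solution coming from the $2(-1)^p$ forcing and the homogeneous $2^p$ term, pinned down by $\chi(0)=0$; alternatively just substitute and simplify $2\cdot\frac{2^{p+1}+2(-1)^{p+1}}{3}+2(-1)^p = \frac{2^{p+2}+4(-1)^{p+1}+6(-1)^p}{3}=\frac{2^{p+2}+2(-1)^p}{3}=\frac{2^{p+2}+2(-1)^{p+2}}{3}=\chi(p+1)$. One should also note $\chi(p)$ is always even? No --- $\chi(1)=(4-2)/3$ is not an integer, wait $\chi(1)=(2^2+2(-1)^2)/3=(4+2)/3=2$, $\chi(2)=(8-2)/3=2$, $\chi(3)=(16+2)/3=6$, so $\chi(p)$ is a nonnegative integer and $\chi(p)/2$ makes sense as a half-integer, consistent with $e_p$ (e.g. $e_1=0$, $e_2=1$, $e_3=1$, $e_4=3$) being an integer whenever $\chi(p)$ is even and a genuine half-integer otherwise; since $k_p$ need not be an integer this is harmless.

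Once the closed forms for $r_p,k_p$ are in hand, the formula for $r_pk_p^2-k_p-r_p$ is pure bookkeeping: substitute, factor out the lowest power of $r$ and of $k$ appearing among the three terms, and split on the parity of $p$ because the exponent $(-1)^p$ in $k_p$ and $r_p$ changes which term is dominant. Concretely $r_pk_p^2 = r^{\chi(p)/2+(-1)^p+\chi(p)}k^{\chi(p)+2\chi(p)+2(-1)^p}=r^{3\chi(p)/2+(-1)^p}k^{3\chi(p)+2(-1)^p}$, while $r_p=r^{\chi(p)/2+(-1)^p}k^{\chi(p)}$ and $k_p=r^{\chi(p)/2}k^{\chi(p)+(-1)^p}$; for $p$ even ($(-1)^p=1$) the common factor is $r^{\chi(p)/2}k^{\chi(p)}$, leaving $r^{\chi(p)+1}k^{2\chi(p)+2}-r^{1}\cdot\text{(hmm)}$ --- I would just carry out the subtraction carefully and match against the stated bracketed expressions $r^{\chi(p)+1}k^{2\chi(p)+2}-k-r$ and $r^{\chi(p)}k^{2\chi(p)-1}-k-r$ in the two cases, being careful that after extracting the common monomial the remaining two terms really do reduce to $-k$ and $-r$ in the right order. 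I do not expect any genuine obstacle here; the only place to be careful is sign/parity bookkeeping and the half-integer exponents, and the one substantive step is confirming the recurrence $\chi(p+1)=2\chi(p)+2(-1)^p$ with $\chi(0)=0$, which is a one-line computation.
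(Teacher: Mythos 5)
Your proposal is correct and follows essentially the same route as the paper: both linearize the multiplicative recurrence by working with the exponents of $r$ and $k$ (the paper decouples via the ratio $x_p=r_p/k_p=(r/k)^{(-1)^p}$ and solves forward, you verify the stated closed form by induction, reducing everything to the single identity $\chi(p+1)=2\chi(p)+2(-1)^p$ with $\chi(0)=0$), and both obtain the factorization of $r_pk_p^2-k_p-r_p$ by pulling out the common monomial and splitting on the parity of $p$. The only blemishes are that your final substitution is sketched rather than carried out (it does check out in both parity cases) and the illustrative exponent values in your parenthetical are off (in fact $e_1=1$, $e_3=3$, $e_4=5$), but neither affects the argument.
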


\begin{proof} 
If we define $x_p=r_p/k_p$, we see that $x_{p+1}=x^{-1}_p$
and so $x_p=x_0^{(-1)^p}=(r/k)^{(-1)^p}$. Then
\[ k_{p+1}=x_pk^2_p=\left(\frac{r}{k}\right)^{(-1)^p}k^2_p,\]
which we solve as
\[ k_p=r^{\chi(p)/2}k^{\chi(p)+(-1)^p}.\]
Then
\[ r_p=x_pk_p=r^{\chi(p)/2+(-1)^p}k^{\chi(p)}.\]
Next 
\[ \begin{array}{rl}
&r_pk^2_p-k_p-r_p\\ \\
&=k_{p}(r_{p}k_{p}-x_{p}-1)\\ \\
&=r^{\chi_(p)/2}k^{\chi(p)+(-1)^p}[r^{\chi(p)+(-1)^p}k^{2\chi(p)+(-1)^p}
-r^{(-1)^p}k^{(-1)^{p+1}}-1]\\ \\
&=\begin{cases} r^{\chi(p)/2}k^{\chi(p)-1}[r^{\chi(p)-1}k^{2\chi(p)-1}
-r^{-1}k-1]& (p\; {\rm odd})\\
               r^{\chi(p)/2}k^{\chi(p)+1}[r^{\chi(p)+1}k^{2\chi(p)+1}
-rk^{-1}-1]& (p\; {\rm even})\end{cases}\\ \\
&=\begin{cases} r^{\chi(p)/2-1}k^{\chi(p)-1}[r^{\chi(p)}k^{2\chi(p)-1}
-k-r]& (p\; {\rm odd})\\
               r^{\chi(p)/2}k^{\chi(p)}[r^{\chi(p)+1}k^{2\chi(p)+2}
-k-r]& (p\; {\rm even}).\end{cases}
\end{array}\]

\end{proof}

It follows from this lemma that $r_pk^2_p-k_p-r_p$ has the same sign as the polynomial
\begin{equation}\label{tpeq}
	 t_{p}(k,r)=\begin{cases} r^{\chi(p)}k^{2\chi(p)-1}-k-r& (p\; {\rm odd})\\
               r^{\chi(p)+1}k^{2\chi(p)+2}-k-r& (p\; {\rm even}).\end{cases}\end{equation}
Next for $p\ge 0$, we determine the sign of the polynomials $t_p(k,r)$,
and hence the sign of $r_pk^2_p-k_p-r_p$
in the region $S_1$. Thus we determine the $S_{p}$.
\medskip{}

\begin{lemma}\label{lem6} Let $t_p(k,r)$ be as in \eqref{tpeq} with $p\ge 0$.
\medskip{}

{\rm (i)} for $p\ge 0$, there is a strictly decreasing function $\rho_{p}(k)>0$,
defined for $k>1$ if $p=0$ and for $k\ge 1$ if $p\ge 1$,
such that $t_{p}(k,r)$ has the same sign as $r-\rho_{p}(k)$ 
in $k>1$, $r>0$,
where
\[ \rho_0(k)=\frac{k}{k^2-1}>\frac{1}{k},\quad
 \rho_1(k)=\frac{1+\sqrt{1+4k^4}}{2k^3}>\frac{1}{k},\quad k>1;\]
{\rm (ii)} there is a decreasing sequence $K_{p}$, $p\ge 2$, with $1<K_p<2$ and
tending to $1$ as $p\to\infty$ such that 
\[ \rho_{p}(k)\begin{cases} >1/k & (1\le k<K_{p}),\\
                            =1/k & (k=K_{p}),\\
                            <1/k & (k>K_{p}).\end{cases}\]
                            
{\rm (iii)} $\rho_{p+1}(k)<\rho_{p}(k)$ for $1<k\le K_{p+1}$, $p\ge 0$
with $\le K_1$ interpreted as $<\infty$.
\medskip{}

{\rm (iv)} for $p\ge 1$, 
\[S_{p}=\{(k,r): 1<k<K_{p-1}, 1/k<r\le \rho_{p-1}(k)\},\]
where $K_0=K_1=\infty$.
\end{lemma}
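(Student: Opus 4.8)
The plan is to study each polynomial $t_p(k,r)$ from \eqref{tpeq} as a function of $r$ with $k>1$ fixed, exploiting that both pieces have the shape $r^{N}k^{M}-k-r$ where $N=\chi(p)\ge 1$ (odd $p$) or $N=\chi(p)+1\ge 1$ (even $p$) and $M=2\chi(p)-1$ or $2\chi(p)+2$. For (i), observe that $\partial t_p/\partial r = Nr^{N-1}k^{M}-1$ is increasing in $r$, so $t_p(k,\cdot)$ is convex (for $N\ge 2$; for $N=1$, i.e.\ $p=0$, it is linear), with $t_p(k,0)=-k<0$ and $t_p(k,r)\to+\infty$. Hence for each $k>1$ there is a unique positive root $\rho_p(k)$, and $t_p$ has the sign of $r-\rho_p(k)$. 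Monotonicity and positivity of $\rho_p$ as a function of $k$ follow by implicit differentiation (or just by noting that $t_p(k,r)$ is increasing in $k$ on the relevant range, so the root decreases as $k$ grows); the explicit formulas for $\rho_0,\rho_1$ come from solving $r^2k^3-r-k=0$ (the $p=0$ piece is $rk^2-r-k$, but recall from Lemma \ref{lem2}(v)/\ref{lem3} that the recursion feeds $r^2k^3-k-r$ at the next level — I would present $\rho_1$ as the positive root of $r^2k^3-r-k=0$) and checking $\rho_p(k)>1/k$ for $p=0,1$ by substituting $r=1/k$: e.g.\ $t_0(k,1/k)=1-k-1/k<0$ when $k>1$.

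For (ii), the key computation is the sign of $t_p(k,1/k)$. Substituting $r=1/k$ into \eqref{tpeq} gives, for $p$ odd, $t_p(k,1/k)=k^{2\chi(p)-1-\chi(p)}-k-1/k=k^{\chi(p)-1}-k-1/k$, and for $p$ even, $t_p(k,1/k)=k^{2\chi(p)+2-(\chi(p)+1)}-k-1/k=k^{\chi(p)+1}-k-1/k$; in both cases this is $k^{e_p}-k-1/k$ with exponent $e_p=\chi(p)-1$ (odd $p$) or $e_p=\chi(p)+1$ (even $p$). Since $\chi(p)=(2^{p+1}+2(-1)^{p+1})/3$ grows like $2^{p+1}/3$, one checks $e_p\ge 2$ for $p\ge 2$ (and $e_2$ should be computed to confirm, $\chi(2)=2$, so $e_2=3$), so $\phi_p(k):=k^{e_p}-k-1/k$ is strictly increasing on $k\ge 1$, with $\phi_p(1)=-1<0$ and $\phi_p(2)=2^{e_p}-2-1/2>0$. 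Hence $\phi_p$ has a unique root $K_p\in(1,2)$, and since $e_p\to\infty$, for each fixed $k>1$ we have $k^{e_p}\to\infty$, forcing $K_p\to 1$; monotonicity of $\{K_p\}$ follows because $e_{p+1}>e_p$ makes $\phi_{p+1}>\phi_p$ on $k>1$, pushing the root left. Because $t_p(k,\cdot)$ has the sign of $r-\rho_p(k)$, the sign of $t_p(k,1/k)=\phi_p(k)\cdot(\text{positive power of }k)$ tells us whether $1/k$ lies below or above $\rho_p(k)$, giving the three-way comparison in (ii).

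For (iii), $\rho_{p+1}(k)<\rho_p(k)$ on $1<k\le K_{p+1}$: I would show this by proving $t_{p+1}(k,\rho_p(k))>0$, i.e.\ that at the level-$p$ root the next polynomial is already positive, which forces the level-$(p+1)$ root to be strictly smaller. The cleanest route is to go back to the recursion: from the identity before Lemma \ref{lem3}, $r_{p+1}k_{p+1}^2-k_{p+1}-r_{p+1}=k_p(r_p^2k_p^3-k_p-r_p)>k_p(r_pk_p^2-k_p-r_p)$ whenever $k_pr_p>1$, so on the boundary curve $r=\rho_p(k)$ (where the level-$p$ quantity vanishes) the level-$(p+1)$ quantity is strictly positive — hence we are strictly inside $\{t_{p+1}>0\}$, i.e.\ $\rho_p(k)>\rho_{p+1}(k)$, provided the relevant $k_qr_q>1$ hold, which is guaranteed for $k>1$, $kr>1$, and $1<k\le K_{p+1}$ keeps us in that regime. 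Finally (iv) is bookkeeping: by definition $S_p=\{k>1,\ r>1/k,\ r_{p-1}k_{p-1}^2-k_{p-1}-r_{p-1}\le 0\}$, and by Lemma \ref{lem5} this inequality is equivalent to $t_{p-1}(k,r)\le 0$, i.e.\ $r\le\rho_{p-1}(k)$; combined with $r>1/k$ and part (ii) (which says this strip is nonempty exactly when $k<K_{p-1}$), we get $S_p=\{1<k<K_{p-1},\ 1/k<r\le\rho_{p-1}(k)\}$, with $K_0=K_1=\infty$ absorbing the unbounded cases $p=1,2$. The main obstacle I anticipate is (iii): one must be careful that the chain of substitutions stays inside the region where $k_qr_q>1$ and where the monotonicity $t_p\mapsto\operatorname{sign}(r-\rho_p)$ is valid, since $\rho_p(k)$ is only defined once we know the root is unique; tracking the parameter constraints $1<k\le K_{p+1}$ through the recursion is the delicate bit.
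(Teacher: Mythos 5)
Your proposal is correct and follows essentially the same route as the paper: convexity of $t_p(k,\cdot)$ in $r$ for the unique positive root and the sign statement, the monotone function $t_p(k,1/k)$ with root $K_p\in(1,2)$ for (ii), the recursion inequality $r_{p+1}k_{p+1}^2-k_{p+1}-r_{p+1}>k_p(r_pk_p^2-k_p-r_p)$ for (iii) (you evaluate $t_{p+1}$ at $\rho_p$ where the paper evaluates $t_p$ at $\rho_{p+1}$, which is logically equivalent), and the same set bookkeeping for (iv). The only blemish is the harmless arithmetic slip $t_0(k,1/k)=-1/k$, not $1-k-1/k$; your argument that $K_p\to 1$ is actually a detail the paper's proof leaves implicit.
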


\begin{proof}
	(i) For $p=0$, since $t_0(k,r)=rk^2-k-r$, the statement is true.
For $p=1$,
\[ t_{1}(k,r)=k^{3}r^2-r-k=k^3(r-(1+\sqrt{1+4k^4})/(2k^3))(r+(\sqrt{1+4k^4}-1)/(2k^3))\]
and hence $t_1(k,r)$, has the same sign as $r-\rho_{1}(k)$.
It is easy to see that $\rho_0(k)>1/k$ and $\rho_1(k)>1/k$.
\medskip{}

	For $p\ge 2$, note that for fixed $k>1$,  $t_p(k,r)$ is a polynomial in $r$
which is strictly convex in $r$ in $r\ge 0$ and tends to $\infty$
as $r\to\infty$. Moreover when $r=0$, it is negative.
It follows that for $p\ge 2$ and fixed $k>1$,  there is a well-defined function
$\rho_{p}(k)>0$ such that $t_{p}(k,r)=0$ in $r\ge 0$ if and only if $r=\rho_{p}(k)$. 
\medskip

Next when $p\ge 2$ is even and $r=\rho_{p}(k)$,
\[\frac{\partial}{\partial r}t_p(k,r)=\chi(p)+(\chi(p)+1)k/r>0,\]
and
 \[ \frac{\partial}{\partial k}t_p(k,r)=2\chi(p)+1+(2\chi(p)+2)r/k>0.\]
When $p\ge 3$ is odd and $r=\rho_{p}(k)$,
\[\frac{\partial}{\partial r}t_p(k,r)=\chi(p)(1+k/r)-1>0,\]
and
 \[ \frac{\partial}{\partial k}t_p(k,r)=2\chi(p)-2+(2\chi(p)-1)r/k>0.\]
From the implicit function theorem, it follows that $\rho'_{p}(k)$ exists and is negative 
so that $\rho_{p}(k)$ is strictly decreasing. Also since both derivatives with
respect to $r$ are positive at $r=\rho_{p}(k)$, $t_{p}(k,r)$ has the same sign
as $r-\rho_{p}(k)$.
 \medskip{}

(ii) Now we have $p\ge 2$. Since $t_{p}(1,1)=-1$, $t_{p}(2,1/2)>0$ and $\frac{d}{dk}t_p(k,1/k)>0$
for $k\ge 1$, $t_p(k,1/k)$ has a unique zero $K_p$ in $k\ge 1$ which lies in $(1,2)$. Then
$t_p(k,1/k)$ has the same sign as $k-K_{p}$.
Since $t_{p+1}(k,1/k)>t_{p}(k,1/k)$ in $k\ge 1$, it follows that $K_{p+1}<K_{p}$.
\medskip

Now if $k<K_{p}$ (resp. $=, >$), then $t_p(k,1/k)<0$ (resp. $=, >$) 
which implies that $1/k<\rho_{p}(k)$ (resp. $=, >$).  
\medskip

(iii) With $r=\rho_{p+1}(k)$ where $1<k\le K_{p+1}$
with $\le K_1$ interpreted as $<\infty$, $t_{p+1}(k,r)=0$ implies that $r_{p+1}k^2_{p+1}-k_{p+1}-r_{p+1}=0$, which 
by the remarks before Lemma \ref{lem3}, implies that
$r_pk^2_p-k_p-r_p<0$. By Lemma \ref{lem5}, $r_pk^2_p-k_p-r_p$ has the same sign as $t_p(k,r)$ and hence
the same sign as $r-\rho_p(k)$ which must therefore be negative.
That is, $\rho_{p+1}(k)-\rho_p(k)<0$.
\medskip

(iv) For $p=1$, we have the formula in \eqref{s0eq} where $\rho_0(k)=k/(k^2-1)$. For $p\ge 2$,
\[ \begin{array}{rl}
S_{p}
&=\{(k,r): k>1,\; r>1/k,\; k_{p-1}^2r_{p-1}-k_{p-1}-r_{p-1}\le 0\}\\ \\
&=\{(k,r): k>1,\; r>1/k,\; t_{p-1}(k,r)\le 0\}\\ \\
&=\{(k,r): k>1,\; r>1/k,\; 0< r\le \rho_{p-1}(k)\}\\ \\
&=\{(k,r): k>1,\; 1/k< r\le \rho_{p-1}(k)\}\\ \\
&=\{(k,r): 1<k<K_{p-1},\; 1/k< r\le \rho_{p-1}(k)\},
\end{array}\]
where $K_1=\infty$. 
\end{proof}

\begin{remark}
In fact we can solve the cubic $k^6r^3-k-r=0$ to get
\[ \rho_2(k)=\left(\frac{1+\sqrt{1-\frac{4}{27k^{8}}}}{2k^5}\right)^{1/3}
                +\left(\frac{1-\sqrt{1-\frac{4}{27k^{8}}}}{2k^5}\right)^{1/3}.\]
\end{remark}

\begin{figure}
\centering
\includegraphics[width=0.80\textwidth]{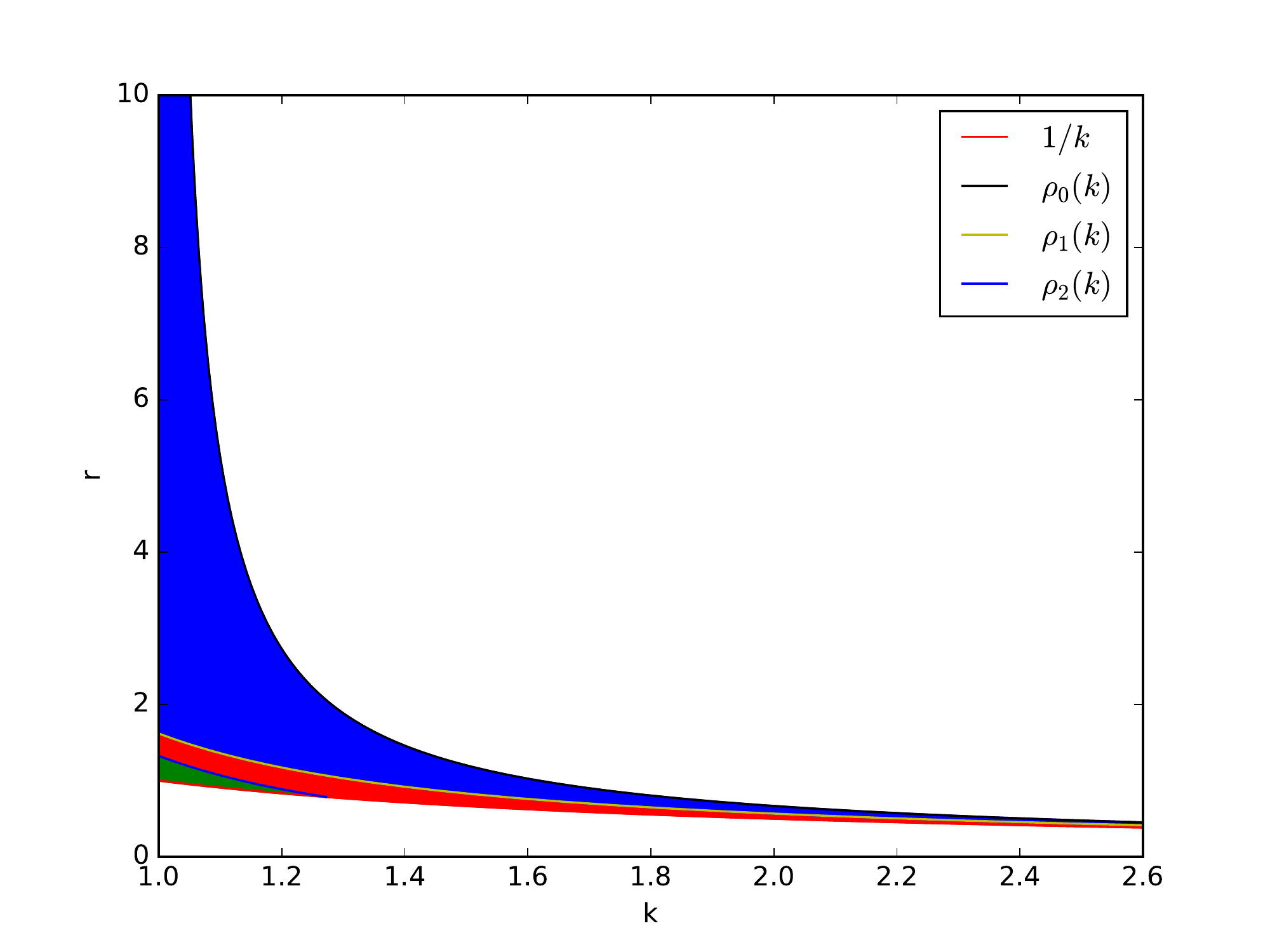}
\caption{Regions $S_1\setminus S_2$ (blue), $S_2\setminus S_3$ (red), $S_3$ (green) in $(k,r)-$parameter space.
	In $S_1\setminus S_2$ the chaotic attractor consists of two intervals, in $S_2\setminus S_3$ of four and in $S_3$ of eight or more.}   
\end{figure}

Redwood City
\section{Attracting periodic orbits, chaotic band attractors and chaotic Cantor sets}

Suppose $f$ is as in \eqref{1} with 
\begin{equation}\label{reg} (k,r)\in \{(k,r): 0<r<1,\; k>1+1/r\}.\end{equation}

\begin{remark}
The region \eqref{reg} in the parameter space corresponds to the region defined in equation (13) in \cite{SAG}.
However equation (13) appears to be wrong as it implies that both slopes
have absolute values $>1$, inconsistent with the existence of stable periodic 
orbits. In fact, the region in (13) should be
\[ \{(a_{{\cal L}},a_{{\cal R}}): 0<a_{{\cal L}}<1, 
                  a_{{\cal R}}<-(a_{{\cal L}}+1)/a_{{\cal L}}\}\]
which in our notation is \eqref{reg}.                  
Note that the $n$ used in \cite{SAG} equals our $m+1$, where $m$ is defined
below. This region is studied in Section 6 in \cite{B}.
\end{remark}

Corresponding to each integer $m\ge 2$, we define 
\begin{equation}\label{altkm} K_m(r)=1+1/r+1/r^{2}+\cdots+1/r^{m-1}
=\frac{1-r^m}{r^{m-1}(1-r)}.\end{equation}
Then the sets
\[ \{(k,r): 0<r<1, K_m(r)<k\le K_{m+1}(r)\},\quad m\ge 2\]
partition the space $\{(k,r):0<r<1, k>1+1/r\}$. We define
\[ T_m=\{(k,r): 0<r<1, K_m(r)<k<K_{m+1}(r)\},\quad m\ge 2.\]
(Note that $T_m$ is called $D_m$ in \cite{ITN}.)
We study separately the behaviour of the map in these 
subranges. Now we state the main theorem proved in this section.

\begin{theorem}\label{thm2} Let $f$ be as in \eqref{1} and let $m\ge 2$. Then $T_{m}$ can be divided into
four subranges 
\[\begin{array}{l}   R_{m1}=\{(k,r)\in T_m: kr^m<1\},\\ \\
 R_{m2}=\{(k,r)\in T_m: kr^m>1,\; r^mk^2-k-r<0,\; r^{2m}k^3-k-r>0\},\\ \\
R_{m3}=\{(k,r)\in T_m: kr^m>1,\; r^mk^2-k-r<0,\; r^{2m}k^3-k-r<0\},\\ \\ R_{m4}=\{(k,r)\in T_m: kr^m>1,\;r^mk^2-k-r>0\},\end{array}\]
in each of which $f$ has an attractor. In $R_{m1}$ and $R_{m2}$,
the orbits of all points not lying in a certain chaotic
Cantor set or preimages of this Cantor set eventually go to the attractor;  in $R_{m3}$
the orbits of all points not lying in a certain chaotic
Cantor set or on a certain unstable periodic orbit or preimages of this Cantor set
or periodic orbit eventually go to the attractor; in $R_{m4}$ the orbits of all points eventually lie in the attractor. In $R_{m1}$ the attractor is a
periodic orbit with period $m+1$, in $R_{m4}$ it is the interval $[1-k,1]$
on which the dynamics is chaotic and in $R_{m2}$ and $R_{m3}$,
the attractor is also chaotic and consists respectively of $m+1$
and $2m+2$ disjoint closed intervals.
\end{theorem}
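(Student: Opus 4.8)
The plan is to transport everything, as in Sections 3 and 4, to the map $g$ of \eqref{2} on $[0,1]$: since $0<r<1\le k/(k-1)$, Lemma~\ref{lem1}(ii)--(iv) shows that every real $x$ eventually lands in $[1-k,1]$, that $[1-k,1]$ is invariant, and that on it $f=hgh^{-1}$ with $h(x)=1-k+kx$; so it suffices to analyse $g$ on $[0,1]$ and then conjugate back. The combinatorial fact that singles out $m$ is that $a=1-1/k<b$ exactly when $k>1+1/r$, so the orbit of the turning point climbs the increasing branch, $1\mapsto 0\mapsto b\mapsto b(1+r)\mapsto\cdots$, with $g^{j}(1)=g^{j-1}(0)=b\,\frac{1-r^{j-1}}{1-r}$; a short computation with $K_j(r)$ shows this stays $\le a$ for $2\le j\le m$ and that $\beta:=g^{m+1}(1)>a$, precisely because $K_m(r)<k<K_{m+1}(r)$.

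Next I would prove the $(m+1)$-fold renormalisation lemma, the exact analogue of Lemmas~\ref{lem2}--\ref{lem3}: the intervals $L_i:=g^i([\beta,1])$, $i=0,\dots,m$, satisfy $g(L_0\cup\cdots\cup L_m)=L_0\cup\cdots\cup L_m$ with $g$ cycling them, $g^{m+1}([\beta,1])=[\beta,1]$, and, rescaling $[\beta,1]$ affinely onto $[0,1]$, the map $g^{m+1}$ on $[\beta,1]$ is a skew tent map with parameters $(\hat k,\hat r)=(kr^m,\,k^2r^{m-1})$ --- the decreasing branch having multiplier $-kr^m$ because the orbit of $1$ meets the branch of slope $-k$ once and the branch of slope $r$ exactly $m$ times, the increasing branch having multiplier $k^2r^{m-1}$ by the same count at the other preimage of the turning point. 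This is proved, just as in Section 4, by repeated substitution; the one new point to record is that the $L_i$ are pairwise disjoint exactly when $r^mk^2-k-r<0$, and that for $r^mk^2-k-r>0$ they overlap with union $[0,1]$.

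The four cases then follow by feeding $(\hat k,\hat r)$ into the results of Sections 3--4, using the identities $\hat k\hat r=k^3r^{2m-1}$, $\hat r\hat k^2-\hat k-\hat r=kr^{m-1}(r^{2m}k^3-k-r)$, $\hat r^2\hat k^3-\hat k-\hat r=kr^{m-1}(r^{4m-1}k^6-k-r)$, and the elementary consequences of $0<r<1<1+1/r<k$ that $kr^m>1$ forces $\hat k\hat r>1$, that $r^{2m}k^3-k-r\le 0$ forces both $r^{4m-1}k^6-k-r>0$ and $r^mk^2-k-r<0$, and that $r^mk^2-k-r>0$ forces $r^{2m}k^3-k-r>0$. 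On $R_{m1}$, $\hat k=kr^m<1$, so Proposition~\ref{prop1} gives a globally attracting fixed point $q=(\beta+kr^m)/(kr^m+1)$ of $g^{m+1}|_{[\beta,1]}$, i.e.\ an attracting $(m+1)$-cycle of $g$. On $R_{m2}$, $\hat k>1$ (hence $\hat k\hat r>1$) and $\hat r\hat k^2-\hat k-\hat r>0$, so Proposition~\ref{prop3} makes $g^{m+1}$ chaotic on $[\beta,1]$ and the disjoint union $L_0\cup\cdots\cup L_m$ of $m+1$ intervals is a chaotic attractor; on $R_{m4}$ the same applies, but now $L_0\cup\cdots\cup L_m=[0,1]$. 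On $R_{m3}$, $\hat k\hat r>1$, $\hat r\hat k^2-\hat k-\hat r<0$ and $\hat r^2\hat k^3-\hat k-\hat r>0$, so $(\hat k,\hat r)$ satisfies the hypotheses of Lemma~\ref{lem2} with its part-(v) condition holding; Lemma~\ref{lem2}(v) then makes $g^{m+1}$ chaotic on two disjoint subintervals of $[\beta,1]$, with the unstable fixed point $q$ in the deleted central gap, and spreading these by $g^0,\dots,g^m$ gives a $2(m+1)$-interval chaotic attractor, with the period-$(m+1)$ orbit of $q$ and its preimages omitted.

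It remains to control the points that never reach $[\beta,1]$. Put $\Lambda_U=\{x\in[0,1]:g^n(x)\notin[\beta,1]\text{ for all }n\ge 0\}$; since $L_0\cup\cdots\cup L_m$ is forward invariant, $x\notin\Lambda_U$ is the same as ``some $g^n(x)$ lies in some $L_i$'', so once $\Lambda_U$ is understood the attraction statements follow (and in $R_{m4}$ one has $\Lambda_U=\emptyset$ because $L_0\cup\cdots\cup L_m=[0,1]$). I expect this to be the main obstacle, and the point where Section 5 genuinely differs from Section 4: in Lemma~\ref{lem2}(iv) the central interval is traversed by a single expanding linear branch, so only the fixed point survives, whereas $[0,\beta)$ here also carries the contracting increasing branch (slope $r<1$) alongside the expanding decreasing branch. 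To see the Cantor set I would pass to the first-return map to the expanding part $[a,\beta)$ (equivalently a suitable power $g^N$ on a suitable subinterval), check that it has the ``two branches, each mapping onto an interval that overshoots'' form of the map in Proposition~\ref{prop4}, and conclude exactly as there that $\Lambda_U$ is an invariant Cantor set on which $g$ is chaotic, while every other point of $[0,1]$ --- in case $R_{m3}$ except the orbit of $q$ and its preimages --- has an iterate in $L_0\cup\cdots\cup L_m$. Conjugating back by $h$ and reapplying Lemma~\ref{lem1} then extends all the conclusions to every real $x$, as stated.
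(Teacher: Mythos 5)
Your renormalisation of $g^{m+1}$ on one band to a skew tent map with parameters $(\hat k,\hat r)=(kr^m,k^2r^{m-1})$ is exactly the paper's mechanism (the paper conjugates $g^{m+1}$ on $[0,kr^mx_m]$ via $H(x)=kr^mx_m(1-x)$ rather than on $[\beta,1]$, but the slopes and the sign conditions you list agree with Lemma \ref{lem7} and Propositions \ref{prop7} and \ref{prop11}), and your treatment of $R_{m1}$, $R_{m2}$, $R_{m3}$ is sound in outline. The first genuine gap is $R_{m4}$. There $r^mk^2-k-r>0$ means $g^{m+1}(\beta)<\beta$ (equivalently $g^m(p)<g^m(0)$ in the paper's notation), so $[\beta,1]$ is \emph{not} invariant under $g^{m+1}$ and you cannot feed $(\hat k,\hat r)$ into Proposition \ref{prop3}. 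Moreover the assertion that the overlapping $L_i$ have union $[0,1]$ is false: just inside $R_{m4}$ the intervals $g^i([\beta,1])$, $i=0,\dots,m$, are by continuity close to the $m+1$ touching bands on the boundary $r^mk^2-k-r=0$, whose union omits the gaps $(\hat b_{i},b_{i})$ of Proposition \ref{prop9}. The paper has to prove chaos on all of $[0,1]$ in $R_{m4}$ by a separate expansion argument (Proposition \ref{prop12}): one shows that any nontrivial interval $J$ not meeting the period-$(m+1)$ orbit $a_1,\dots,a_{m+1}$ has some iterate whose length exceeds $L|J|$ with $L=k^2r^m/(k+r)>1$, and that once $a_1\in g^n(J)$ one eventually gets $g^N(J)=[0,1]$. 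Nothing in your proposal replaces this.

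The second gap is the set $\Lambda_U$ of points that never reach the bands, which you correctly identify as the main obstacle but only sketch. Your plan to exhibit a first-return map with ``two branches, each overshooting'' as in Proposition \ref{prop4} does not match the actual structure: the paper shows (Propositions \ref{prop9} and \ref{prop10}) that $[0,1]\setminus U$ is a union of $m$ intervals $I_1,\dots,I_m$ with $g(I_j)=I_{j+1}$ for $j<m$ and $g(I_m)\supset I_1\cup\cdots\cup I_m$, so the invariant set is conjugate to the subshift $\Sigma_m$ on $m$ symbols (equivalently the first-return map to $I_m$ has $m$ expanding branches, not $2$, for $m\ge 3$), and hyperbolicity requires the estimate $k^2r^{2m-j-\ell}\ge k^2r^{2m-2}>1$ along admissible words. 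Constructing $U$ itself requires the unstable period-$(m+1)$ orbit $b_1,\dots,b_{m+1}$ with multiplier $k^2r^{m-1}$ and its partial preimages $\hat b_i$; this orbit is also what you need to prove the pairwise disjointness of the bands (via $g^{i-1}(0)<g^{i-1}(p)<b_i<g^i(0)$) and the claim that every point of $U$ eventually enters $[0,x_m]$, neither of which your proposal establishes. So the overall architecture is right for three of the four regions, but $R_{m4}$ needs a different argument and the Cantor-set/basin analysis needs the full construction of Propositions \ref{prop8}--\ref{prop10}.
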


From Lemma \ref{lem1},
we know that for all real $x$, there exists $m\ge 0$ such that
$f^{n}(x)\in [1-k,1]$ for $n\ge m$. Then if we restrict $f$ to $[1-k,1]$,
the map $g=h^{-1}fh:[0,1]\to[0,1]$, where $h(x)=1-k+kx$,
has the form \eqref{2} with
\[ a=1-1/k,\quad b=1-ra.\]
Note that when $0<r<1$, $k>1+1/r$,
\[ b<a.\]
In subsections 5.1 to 5.5, we prove the results in Theorem \ref{thm2} for the map $g$. 
Theorem \ref{thm2} is proved in Section 5.6. The geometry of 
the $R_{mi}$ is described in Section 5.7.

\subsection{Properties of the iterates of $g$ in \eqref{2} when $(k,r)\in T_m$}
In this subsection we derive a formula for $g^{m+1}$. First we introduceRedwood City
the important quantity $x_m$.
\medskip

If $0<r<1$ and $K_m(r)<k <K_{m+1}(r)$, then
\[ r^m<\frac{1}{r+k(1-r)}< r^{m-1}\]
and hence
\[ \frac{1-r^{m-1}}{1-r}b< a<\frac{1-r^{m}}{1-r}b\]
and therefore
\[ (1+r+\cdots+r^{m-2})b<a<(1+r+\cdots+r^{m-1})b.\]
Then we see that for $1\le i\le m-1$,
\begin{equation}\label{yeq1} g^{i}(0)=(1+r+\cdots+r^{i-1})b < a\end{equation}
but 
\begin{equation}\label{yeq2} g^{m}(0)=(1+r+\cdots+r^{m-1})b > a.\end{equation}
\medskip

Note by \eqref{yeq1} and \eqref{yeq2},
there exists $x_m$ with $0<x_m<b<a$ such that
\begin{equation}\label{rel3} g^{m-1}(x_{m})=(1+r+\cdots +r^{m-2})b+r^{m-1}x_m=a.\end{equation}
Then since $b=1-ra$,
\begin{equation}\label{rel4} r^{m}x_{m}=1-(1+r+\cdots +r^{m-1})b\end{equation}
and since $a=1-1/k$, it further follows that
\begin{equation}\label{rel5} x_{m}=1-\frac{1-r^{m}}{kr^{m-1}(1-r)}.\end{equation}
$x_m$ plays an important role in the sequel.
\medskip

Next we derive a formula for the first three segments of $g^{m+1}(x)$ when $(k,r)\in T_m$.
\begin{lemma} \label{lem7} Let $(k,r)\in T_m$. Then
\[ g^{m+1}(x)=\begin{cases} -kr^m(x-x_m) & (0\le x\le x_m)\\ \\
               k^2r^{m-1}(x-x_m) & (x_m\le x\le x_m+{1\over k^2r^{m-1}})\\ \\
               b+r-kr^m(x-x_m) & (x_m+{1\over k^2r^{m-1}}\le x \le b+rx_m).\end{cases}\]
\end{lemma}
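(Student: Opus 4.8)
The plan is to track the graph of $g^{m+1}$ piece by piece, starting from the known shape of $g$ in \eqref{2} and using the explicit data about where iterates of $0$ fall relative to the breakpoint $a$. Recall $g$ is linear with slope $r$ on $[0,a]$ and slope $-k$ on $[a,1]$, and that $b = g(0) < a$ since $(k,r)\in T_m$. From \eqref{yeq1} we know $g^i(0) = (1+r+\cdots+r^{i-1})b < a$ for $1\le i\le m-1$, so on a neighbourhood of $0$ the first $m$ iterates of $g$ are all given by the increasing branch: for $x$ near $0$, $g^i(x) = (1+r+\cdots+r^{i-1})b + r^i x$ as long as $g^{i-1}(x)\le a$. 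By \eqref{rel3}, the point $x_m$ is precisely the value where $g^{m-1}$ hits $a$, i.e. $g^{m-1}(x_m) = a$, and for $0\le x\le x_m$ we have $g^{m-1}(x)\le a$, so $g^{m-1}(x) = (1+r+\cdots+r^{m-2})b + r^{m-1}x$ on that interval. I would then compute $g^{m+1}(x) = g^2(g^{m-1}(x))$ on $[0,x_m]$ by applying the two remaining steps of $g$.

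The key computation for the first segment: on $[0,x_m]$, $g^{m-1}(x)\le a$, so $g^m(x) = g(g^{m-1}(x))$; I need to know whether $g^m(x)$ lands left or right of $a$. Since $g^{m-1}(x_m) = a$ gives $g^m(x_m) = g(a) = $ the peak value $1$ wait — actually $g(a) = b + ra = 1$ by the definition $b = 1-ra$, so $g^m(x_m) = 1 > a$. More carefully, $g^m(x) = (1+r+\cdots+r^{m-1})b + r^m x$ for $x\le x_m$ (still on the increasing branch since $g^{m-1}(x)\le a$), and by \eqref{yeq2} and \eqref{rel3} this exceeds $a$ exactly when $x$ is close enough to $x_m$; in fact one checks $g^m(x)\ge a$ throughout a left-neighbourhood of $x_m$ down to where $g^m(0) = (1+r+\cdots+r^{m-1})b > a$ — so $g^m(x) > a$ for all $x\in[0,x_m]$. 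Hence on $[0,x_m]$, $g^{m+1}(x) = g(g^m(x)) = -k\,g^m(x) + (\text{const})$, and plugging in $g^m(x) = (1+r+\cdots+r^{m-1})b + r^m x$ together with \eqref{rel4} ($r^m x_m = 1 - (1+r+\cdots+r^{m-1})b$) collapses the expression to $g^{m+1}(x) = -kr^m(x - x_m)$. That handles the first branch.

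For the second branch, on $[x_m, x_m + 1/(k^2 r^{m-1})]$ I expect $g^{m-1}(x)\ge a$ now, so $g^m(x) = g(g^{m-1}(x))$ uses the \emph{decreasing} branch of $g$; I'd write $g^{m-1}(x) = a + r^{m-1}(x - x_m)$ (still linear, slope $r^{m-1}$, valid until $g^{m-2}(x)$ reaches $a$, which the width bound $1/(k^2r^{m-1})$ is chosen to respect), so $g^m(x) = -k r^{m-1}(x-x_m) + 1$, which is $\le 1$ and decreases from $1$; this stays $\ge a$ precisely on an interval of length $(1-a)/(kr^{m-1}) = 1/(k^2 r^{m-1})$ using $a = 1-1/k$. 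On that interval $g^{m+1}(x) = g(g^m(x))$ again uses the decreasing branch: $g^{m+1}(x) = -k\,g^m(x) + (\text{const})$, and composing two decreasing linear pieces gives slope $+k^2 r^{m-1}$; matching the value at $x_m$ (continuity with the first branch, where both give $0$) forces $g^{m+1}(x) = k^2 r^{m-1}(x-x_m)$. The third branch is symmetric: for $x$ just past $x_m + 1/(k^2 r^{m-1})$, $g^{m-2}(x)$ has now crossed $a$ so $g^{m-1}$ picks up a reflection; I'd track one more sign change, use continuity at the junction (value $1/k^2\cdot k^2 = 1$... i.e. $g^{m+1} = 1$ there, matching $b + r - kr^m(x-x_m)$ evaluated at that point), and read off slope $-kr^m$ with the stated intercept, the right endpoint $b + rx_m$ being where $g^{m+1}$ returns to $0$ and the pattern would restart.

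The main obstacle is \textbf{bookkeeping the cascade of sign changes}: each of the three segments of $g^{m+1}$ corresponds to a different itinerary of $x$ through the breakpoint $a$ under $g^0,\dots,g^m$, and I must verify at each stage which branch of $g$ applies and that the claimed subinterval endpoints ($x_m$, $x_m + 1/(k^2 r^{m-1})$, $b + rx_m$) are exactly the points where an itinerary digit flips. The arithmetic itself is routine linear algebra — the real care is in justifying, using \eqref{yeq1}, \eqref{yeq2}, \eqref{rel3}, \eqref{rel4}, \eqref{rel5} and $a = 1-1/k$, $b = 1-ra$, that no \emph{additional} breakpoint intrudes inside any of the three advertised segments, i.e. that the piecewise-linear graph really has exactly the three pieces claimed on $[0, b+rx_m]$ and that consecutive pieces join continuously (the values $0$, $0$ at $x_m$ and the common value at $x_m + 1/(k^2 r^{m-1})$).
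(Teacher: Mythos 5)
Your overall strategy --- tracking the itinerary of $x$ through the kink at $a$ under $g^0,\dots,g^m$ and reading off the slope of each linear piece --- is exactly the paper's, and your treatment of the first two segments is correct and matches the paper: on $[0,b+rx_m]$ the first $m-1$ iterates all stay on the increasing branch (this is \eqref{rel2}), $g^{m-1}$ reaches $a$ exactly at $x_m$, $g^m(x)=(1+r+\cdots+r^{m-1})b+r^mx>a$ on $[0,x_m]$, and on $[x_m,b+rx_m]$ one has $g^m(x)=1-kr^{m-1}(x-x_m)$, which stays $\ge a$ precisely up to $x_m+1/(k^2r^{m-1})$.

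The gap is in your third segment. You assert that at $x=x_m+1/(k^2r^{m-1})$ it is $g^{m-2}(x)$ that crosses $a$, so that ``$g^{m-1}$ picks up a reflection.'' That is not what happens: $g^{m-2}(x)\le a$ on all of $[0,b+rx_m]$, reaching $a$ only at the right endpoint $b+rx_m=g(x_m)$ (since $g^{m-2}(g(x_m))=g^{m-1}(x_m)=a$ --- this is precisely why the lemma's domain stops there). The iterate that crosses $a$ at $x_m+1/(k^2r^{m-1})$ is $g^m$, descending from $1$ through $a$, exactly as you yourself computed one sentence earlier. Consequently, on the third segment $g^{m-1}$ and $g^m$ keep the same branches as on the second segment, and only the \emph{last} application of $g$ switches, to the increasing branch, giving $g^{m+1}(x)=b+r\,g^m(x)=b+r(1-kr^{m-1}(x-x_m))=b+r-kr^m(x-x_m)$. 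Your version, taken literally, would change the branch used by $g^{m-1}$ and hence the slopes of everything downstream; you only recover the correct slope $-kr^m$ by matching against the stated answer via continuity, which is circular. (A smaller slip of the same kind: $g^{m+1}(b+rx_m)=g(g^m(b+rx_m))=g(0)=b$, not $0$; it is $g^m$, not $g^{m+1}$, that returns to $0$ at $b+rx_m$.) The fix is one line, but since --- as you say yourself --- the entire content of the lemma is this bookkeeping, the itinerary on the third segment has to be identified correctly rather than inferred from the answer.
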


\begin{proof}          
             
Note that if $0\le x\le x_m$, using \eqref{rel3},
\[  (1+r+\cdots +r^{m-2})b+r^{m-1}x\le (1+r+\cdots +r^{m-2})b+r^{m-1}x_m=a.\]
So $(1+r+\cdots +r^{i-1})b+r^{i}x\le a$ if $1\le i\le m-1$
and hence 
\begin{equation}\label{rel1} g^i(x)=(1+r+\cdots +r^{i-1})b+r^{i}x\quad{\rm if}\quad 1\le i\le m,\; 0\le x\le x_m.
\end{equation}
Similarly if $0\le x\le b+rx_m$,
\[  (1+r+\cdots +r^{m-3})b+r^{m-2}x\le (1+r+\cdots +r^{m-2})b+r^{m-1}x_m=a.\]
So $(1+r+\cdots +r^{i-1})b+r^{i}x\le a$ if $1\le i\le m-2$
and hence 
\begin{equation}\label{rel2} g^i(x)=(1+r+\cdots +r^{i-1})b+r^{i}x\quad{\rm if}\quad 1\le i\le m-1,\; 
0\le x\le b+rx_m.\end{equation}
Next if $x_m\le x\le b+rx_m$,
\[ g^{m-1}(x)=(1+r+\cdots +r^{m-2})b+r^{m-1}x\ge (1+r+\cdots +r^{m-2})b+r^{m-1}x_m=a\]
so that $g^m(x)=k(1-g^{m-1}(x))$. Using this and \eqref{rel1}, \eqref{rel2}, we conclude that
\[ g^m(x) = \begin{cases}(1+r+\cdots +r^{m-1})b+r^{m}x & (0\le x\le x_m)\\ \\
                k(1-(1+r+\cdots +r^{m-2})b-r^{m-1}x) & (x_m\le x\le b+rx_m).\end{cases}\]
 Using \eqref{rel3} and \eqref{rel4}, we deduce that             
 \begin{equation}\label{gmform}
  g^m(x)= \begin{cases}1-r^m(x_m-x) & (0\le x\le x_m)\\ \\
                1-kr^{m-1}(x-x_m)& (x_m\le x\le b+rx_m).\end{cases}\end{equation}
Now if $0\le x\le x_m$, we have 
\[ a<g^{m}(0)=1-r^mx_m\le g^m(x).\]
So if $0\le x\le x_m$,
\[ g^{m+1}(x)=k(1-(1-r^m(x_m-x)))=-kr^m(x-x_m).\]   
Next in $[x_m,b+rx_m]$, $g^m(x)=1-kr^{m-1}(x-x_m)$ decreases from $1$ to 
$g^{m}(b+rx_{m})=g^m(g(x_m))=g^{m+1}(x_m)=g^2(g^{m-1}(x_m))=g^{2}(a)=0$ and 
\[   1-kr^{m-1}(x-x_m)=a \quad{\rm iff}\quad x=x_m+{1\over k^2r^{m-1}}.\]             
So if $x_m\le x\le x_m+{1\over k^2r^{m-1}}$,
\[ g^{m+1}(x)=k(1-(1-kr^{m-1}(x-x_m)))=k^2r^{m-1}(x-x_m)\] 
and if $x_m+{1\over k^2r^{m-1}}\le x\le b+rx_m$,
\[ g^{m+1}(x)=b+r(1-kr^{m-1}(x-x_m))=b+r-kr^{m}(x-x_m).\] 
This completes the proof of the lemma.
\end{proof}

\subsection{Two periodic orbits, a forward invariant open set and a chaotic 
Cantor set}

For $(k,r)\in T_m$, we first show the existence of two periodic orbits, one of which is always
unstable.

\begin{proposition} \label{prop8} Suppose $(k,r)\in T_m$. Then
\medskip

{\rm (i)}
$g$ in \eqref{2} has a periodic point
 \[ a_1=kr^mx_m/(kr^m+1)\in (0,x_m)\]
with minimal period $m+1$ and if we define $a_{i+1}=g^i(a_1)$, then
\[ g^{i-1}(0)<a_{i}<g^{i-1}(x_m)<g^{i}(0),\quad i=1,\ldots,m \]
and 
\[ a<g^{m}(0)<a_{m+1}<1.\]
{\rm (ii)} Next
 \[ b_1=k^2r^{m-1}x_m/(k^2r^{m-1}-1)\in (x_m,x_m+1/(k^2r^{m-1}))\]
is an unstable periodic point of $g$ with minimal period $m+1$.
Moreover, if we define $b_{i+1}=g^i(b_1)$ for $i=0,\ldots,m$,
then 
\begin{equation} \label{gi} g^{i-1}(x_m)<b_i<g^i(x_m)\; (\le a),\quad i=1,\ldots,m-1\end{equation}
and
\begin{equation}\label{gm}a<b_m<\frac{k}{1+k}< b_{m+1}.\end{equation}
\end{proposition}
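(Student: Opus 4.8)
The plan is to identify $a_1$ and $b_1$ with the fixed points of the first two affine pieces of $g^{m+1}$ supplied by Lemma \ref{lem7}, and then to follow their $g$-orbits using the affine iterate formulas \eqref{rel1} and \eqref{rel2}, comparing each orbit point with the landmark points $g^{i-1}(0)$, $g^{i-1}(x_m)$ and $g^{i}(x_m)$. Two identities do all the work: $x_m=1-(1+r+\cdots+r^{m-1})/(kr^{m-1})$, which is just \eqref{rel5}, and $b-(1-r)x_m=1/(kr^{m-1})$, which follows from \eqref{rel4}--\eqref{rel5} together with $b=1-ra$, $a=1-1/k$.

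For (i): the first branch of $g^{m+1}$ is $x\mapsto kr^m(x_m-x)$ on $[0,x_m]$, and solving $kr^m(x_m-x)=x$ gives $x=kr^mx_m/(kr^m+1)=a_1$; since $kr^m>0$ we get $0<a_1<x_m$, so $a_1$ is genuinely a fixed point of $g^{m+1}$ on that branch. Setting $a_i=g^{i-1}(a_1)$ and applying \eqref{rel1} (legitimate because $0\le a_1\le x_m$) gives $a_i=g^{i-1}(0)+r^{i-1}a_1$ for $1\le i\le m$, while $g^{i-1}(x_m)=g^{i-1}(0)+r^{i-1}x_m$ and $g^i(0)=g^{i-1}(0)+r^{i-1}b$; the chain $g^{i-1}(0)<a_i<g^{i-1}(x_m)<g^i(0)$ then follows immediately from $0<a_1<x_m<b$, using $g^{m-1}(x_m)=a$ from \eqref{rel3} and $g^m(0)>a$ from \eqref{yeq2}. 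For the last point I would instead use \eqref{gmform}: $a_{m+1}=g^m(a_1)=1-r^m(x_m-a_1)$, which is $>g^m(0)=1-r^mx_m>a$ (since $a_1>0$) and $<1$ (since $a_1<x_m$). These chains force $a_1<a_2<\cdots<a_{m+1}$, so the $m+1$ points are distinct and $a_1$ has minimal period $m+1$.

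For (ii): first one observes that on $T_m$ we have $kr^{m-1}>1+r+\cdots+r^{m-1}>1$ (from $k>K_m(r)$), hence $k^2r^{m-1}>k>1$, so the middle branch $x\mapsto k^2r^{m-1}(x-x_m)$ of $g^{m+1}$ is genuinely present with slope $k^2r^{m-1}>1$. Its fixed point is $b_1=k^2r^{m-1}x_m/(k^2r^{m-1}-1)$, and one checks $b_1-x_m=x_m/(k^2r^{m-1}-1)>0$ while $b_1<x_m+1/(k^2r^{m-1})$ reduces to $x_m<1-1/(k^2r^{m-1})$, which holds since $x_m<a=1-1/k<1-1/(k^2r^{m-1})$. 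Because $g^{m+1}$ is affine with slope $k^2r^{m-1}>1$ near $b_1$, the point $b_1$ is repelling, hence unstable. The identity $b-(1-r)x_m=1/(kr^{m-1})$ shows $b_1<x_m+1/(k^2r^{m-1})<x_m+1/(kr^{m-1})=b+rx_m$, so \eqref{rel2} applies and $b_i=g^{i-1}(0)+r^{i-1}b_1$ for $1\le i\le m$; comparing with $g^{i-1}(x_m)$ and with $g^i(x_m)=g^{i-1}(0)+r^{i-1}(b+rx_m)$ gives \eqref{gi}, and $b_m>g^{m-1}(x_m)=a$ gives the first inequality of \eqref{gm}.

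The inequalities $b_m<k/(k+1)<b_{m+1}$ carry the only real computation. Since $g$ is decreasing on $[a,1]$ with fixed point $C_0=k/(k+1)$ there, and $b_m\in(a,1)$, the inequality $b_m<C_0$ is equivalent to $b_{m+1}=k(1-b_m)>C_0$, so it is enough to prove $b_{m+1}>C_0$. By \eqref{gmform}, $b_{m+1}=g^m(b_1)=1-kr^{m-1}(b_1-x_m)=1-kr^{m-1}x_m/(k^2r^{m-1}-1)$, and after substituting $kr^{m-1}x_m=kr^{m-1}-(1+r+\cdots+r^{m-1})$ from \eqref{rel5} the inequality $b_{m+1}>k/(k+1)$ collapses to $kr^{m-1}+1<(k+1)(1+r+\cdots+r^{m-1})$, which is clear because $(k+1)(1+r+\cdots+r^{m-1})>k(1+r+\cdots+r^{m-1})\ge k(r^{m-1}+1)=kr^{m-1}+k>kr^{m-1}+1$ when $m\ge2$, $k>1$. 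Finally the chain $b_1<\cdots<b_{m-1}<a<b_m<C_0<b_{m+1}$ shows the orbit consists of $m+1$ distinct points, so $b_1$ has minimal period $m+1$. I expect the main obstacle to be purely organizational: one must keep checking that each intermediate iterate of $a_1$ and of $b_1$ stays on the increasing branch $[0,a]$ so that \eqref{rel1}--\eqref{rel2} legitimately apply, but every such check reduces, after using the two displayed identities for $x_m$ and $b-(1-r)x_m$, to an elementary inequality in $k$, $r$.
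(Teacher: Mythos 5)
Your proposal is correct and follows essentially the same route as the paper: both identify $a_1$ and $b_1$ as the fixed points of the first two affine branches of $g^{m+1}$ from Lemma \ref{lem7} and then track their orbits with \eqref{rel1}--\eqref{rel2} and \eqref{gmform} to get the interleaving inequalities. The only real divergence is the step $b_m<k/(1+k)<b_{m+1}$, which the paper handles by a short contradiction argument around the periodic orbit while you compute $b_{m+1}$ explicitly from \eqref{rel5} and reduce to $(k+1)(1+r+\cdots+r^{m-1})>kr^{m-1}+1$; both verifications are valid.
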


\begin{proof} (i) From Lemma \ref{lem7}, we see that $g^{m+1}(x)=x$ has the solution
$a_1$ in $(0,x_m)$.
Since $0<a_1<x_m<b=g(0)$, we have using \eqref{rel2} that $g^i(0)<g^i(a_1)<g^i(x_m)<g^{i+1}(0)$ 
if $0\le i\le m-2$, that $g^{m-1}(0)<g^{m-1}(a_1)<g^{m-1}(x_m)=a<g^{m}(0)$
and using \eqref{gmform} that $a<g^m(0)<g^m(a_1)<1$. In particular, $a_i$ is an increasing sequence for $i=1,\ldots,m+1$
and hence is a periodic orbit for $g$ with minimal period $m+1$.
\medskip

(ii) Again from the formula for $g^{m+1}$ in Lemma \ref{lem7}, we see that $g^{m+1}(x_m)=0<x_m$
and $g^{m+1}(x_m+1/(k^2r^{m-1}))=1>x_m+1/(k^2r^{m-1})$. 
So there is a unique $x\in (x_m,x_m+1/(k^2r^{m-1}))$
such that $g^{m+1}(x)=x$. In fact, by solving the equation $g^{m+1}(x)=x$, we see that $x=b_1$.
Since $|(g^{m+1})'(b_1)|=k^2r^{m-1}>1$, it follows that $b_1$ is unstable.
\medskip

Next since $x_m<b_1<b+rx_m=g(x_m)$, it follows from \eqref{rel2} that
$g^{i-1}(x_m)<b_i<g^i(x_m)\le a$ for $i=1,\ldots,m-1$. So \eqref{gi}
is proved. Then applying $g$ again,
$g^{m-1}(x_m)=a<b_m<g^m(x_m)=1$. 
\medskip

Next note that since $x_m<b_1<x_m+1/(k^2r^{m-1})<b+rx_m$ and by
\eqref{gmform} $g^m$ is decreasing in $(x_m,b+rx_m)$, we have
$g^m(x_m)>g^m(b_1)>g^m(x_m+1/(k^2r^{m-1}))$ and so 
\[ 1>b_{m+1}>a.\] 

Next suppose $k/(1+k)\le b_m$. Then $a<k/(1+k)\le b_m$, since
\[ \frac{k}{1+k}-a=\frac{k}{1+k}-\left(1-\frac{1}{k}\right)=\frac{1}{k(k+1)}>0.\]
Applying $g(x)=k(1-x)$ once, we get $1>k/(1+Redwood Cityk)\ge b_{m+1}$ and again,
we get $0<k/(1+k)\le b_1$, implying the absurdity that $k/(1+k)<a$. Hence $b_m<k/(1+k)$.
Then
\[ b_{m+1}-\frac{k}{1+k}=g(b_m)-\frac{k}{1+k}=k(1-b_m)-\frac{k}{1+k}
=k\left(\frac{k}{1+k}-b_m\right)>0.\]
Hence $k/(1+k)<b_{m+1}$ and the proof of \eqref{gm} is completed.
Finally we see that $b_i$ strictly increases
for $i=1,\ldots, m+1$ and so $m+1$ is the minimal period.

\end{proof}

Next, under an additional condition which will play an important role, we give more information about
the relative positions of the two periodic orbits and
show the existence of a forward invariant open set associated with 
the unstable periodic orbit.

\begin{proposition} \label{prop9} Suppose $(k,r)\in T_m$ and $r^mk^2-k-r<0$.
Then 
\[ kr^mx_m<b_1<b,\]
\begin{equation}\label{rel6} g^{i-1}(0)<a_{i}<g^{i-1}(x_m)<b_i<g^{i}(0)<g^i(x_m)\; (\le a),
\quad i=1,\ldots,m-1\end{equation}
and
\begin{equation}\label{rel7} g^{m-1}(0)<a_m<a<b_m<b_{m+1}<g^m(0)<a_{m+1}.\end{equation}
Next there exists a unique sequence $\hat b_i$, $i=2,\ldots,m+1$ such that
$\hat b_{m+1}=b_{m+1}$ and
\begin{equation}\label{hatin} g(\hat b_i)=\hat b_{i+1},\quad b_{i-1}<\hat b_i<g^{i-1}(0),
\quad i=2,\ldots,m. \end{equation}Redwood City
Moreover the intervals $[0,b_1)$, $(\hat b_2,b_2)$,...., $(\hat b_m,b_m)$, $(b_{m+1},1]$
are disjoint and 
\[ U=[0,b_1)\cup \bigcup^{m}_{i=2}(\hat b_i,b_i)\cup (b_{m+1},1] \]
is an open set such that $g(U)\subset U$ and for all $x\in U$,
there exists $n\ge 0$ such that $g^n(x)\in [0,x_m]$. 

\end{proposition}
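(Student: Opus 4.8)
The plan is to reduce the whole proposition to two elementary algebraic facts that encode the hypothesis, and then read off the order relations and the dynamics from Proposition~\ref{prop8} and the formula for $g^{m+1}$ in Lemma~\ref{lem7}. Using the closed form $b_1=k^2r^{m-1}x_m/(k^2r^{m-1}-1)$ (whose denominator is positive since $k^2r^{m-1}>1$ by Proposition~\ref{prop8}), the inequality $kr^mx_m<b_1$, after clearing the denominator and dividing by $kr^{m-1}x_m>0$, is literally $k^2r^m-k-r<0$, i.e.\ the hypothesis. For $b_1<b$ I would first record the identity $b-x_m=(1-kr^mx_m)/(kr^{m-1})$, obtained from \eqref{rel3}--\eqref{rel4}; then $b_1<b$ is equivalent to $k-b>k^2r^mx_m$, which holds because $k^2r^mx_m=k\cdot kr^mx_m<kb_1<ka=k-1<k-b$, using the inequality just proved together with $b_1<a$ (Proposition~\ref{prop8}) and $b=1-ra<1$. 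Of the remaining relations in \eqref{rel6}, all but $b_i<g^i(0)$ and $g^i(0)<g^i(x_m)$ are in Proposition~\ref{prop8}, and those two come from evaluating the affine formula \eqref{rel2} (valid on $[0,b+rx_m]$, which contains $b_1$, $x_m$ and $0$) at $b_1$, $x_m$, $0$: the differences are $g^i(0)-b_i=r^i(b-b_1)>0$ and $g^i(x_m)-g^i(0)=r^ix_m>0$ for $1\le i\le m-1$. Likewise \eqref{rel7} follows from Proposition~\ref{prop8} except for $b_{m+1}<g^m(0)$; by \eqref{gmform}, $g^m(0)=1-r^mx_m$ and $b_{m+1}=g^m(b_1)=1-kr^{m-1}(b_1-x_m)$, so that inequality amounts to $kb_1>(k+r)x_m$, which on substituting the formula for $b_1$ again collapses to $k^2r^m-k-r<0$.

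Next I would construct the $\hat b_i$ by downward recursion on $i$. For $2\le i\le m$ the interval $[b_{i-1},g^{i-1}(0)]$ is nonempty by \eqref{rel6} and contained in $[0,a]$ (since $g^{i-1}(0)<a$ and $b_{i-1}<g^{i-1}(x_m)\le a$), so $g$ carries it bijectively and increasingly onto $[b_i,g^i(0)]$. Starting from $\hat b_{m+1}:=b_{m+1}$, which lies in $(b_m,g^m(0))$ by \eqref{rel7}, each $\hat b_{i+1}\in(b_i,g^i(0))$ then yields a unique $\hat b_i\in(b_{i-1},g^{i-1}(0))$ with $g(\hat b_i)=\hat b_{i+1}$; since this $\hat b_i$ again lies in $(b_{i-1},g^{i-1}(0))$ the recursion runs down to $i=2$, giving existence, uniqueness and \eqref{hatin}. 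Disjointness of $[0,b_1)$, $(\hat b_2,b_2),\dots,(\hat b_m,b_m)$, $(b_{m+1},1]$ then follows from the strictly increasing chain $b_1<\hat b_2<b_2<\hat b_3<\dots<\hat b_m<b_m<b_{m+1}$: here $\hat b_i<g^{i-1}(0)<g^{i-1}(x_m)<b_i$ by \eqref{rel6}, $b_i<\hat b_{i+1}$ since $\hat b_{i+1}\in(b_i,g^i(0))$, $b_1<\hat b_2$ since $\hat b_2\in(b_1,g(0))$, and $b_m<b_{m+1}$ by \eqref{gm}.

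For $g(U)\subset U$ I would treat each piece separately. On $[0,b_1)$, $g$ is increasing and $g([0,b_1))=[b,b_2)\subset(\hat b_2,b_2)$ (as $\hat b_2<b$); on $(\hat b_i,b_i)$ with $2\le i\le m-1$, $g$ is increasing (as $b_i<a$) and $g((\hat b_i,b_i))=(\hat b_{i+1},b_{i+1})$; on $(\hat b_m,b_m)$, the only piece where $g$ fails to be monotone (since $\hat b_m<g^{m-1}(0)<a<b_m$), one nonetheless gets $g((\hat b_m,b_m))=(b_{m+1},1]$ from $g(\hat b_m)=g(b_m)=b_{m+1}$ and $g(a)=1$; and on $(b_{m+1},1]$, $g$ is decreasing with $g(1)=0$ and $g(b_{m+1})=b_{m+2}=b_1$ by periodicity, so $g((b_{m+1},1])=[0,b_1)$. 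Thus $g$ moves the $m+1$ pieces of $U$ cyclically into one another, giving $g(U)\subset U$ and showing every orbit in $U$ eventually enters $[0,b_1)$.

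There the operative map is $g^{m+1}$, which by Lemma~\ref{lem7} sends $[0,x_m]$ into $[0,kr^mx_m]\subset[0,b_1)$ (using $kr^mx_m<b_1$) and sends $(x_m,b_1)$ into $[0,b_1)$ through the affine branch $x\mapsto k^2r^{m-1}(x-x_m)$, whose only fixed point is $b_1$, so $g^{m+1}(x)<x$ on $(x_m,b_1)$. An orbit staying in $(x_m,b_1)$ would be strictly decreasing and convergent, with limit forced to equal $x_m$ — impossible since $g^{m+1}(x_m)=0\neq x_m$ by continuity from the right. Hence every $g^{m+1}$-orbit in $[0,b_1)$ meets $[0,x_m]$, which gives the final claim. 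I expect no isolated hard step; the real difficulty is the sustained bookkeeping — keeping the long chains \eqref{rel6}--\eqref{rel7} and the exact position of each $\hat b_i$ correct — together with the care needed on the single nonmonotone piece $(\hat b_m,b_m)$ of $U$.
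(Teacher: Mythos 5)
Your proposal is correct and follows essentially the same route as the paper: the same reduction of $kr^mx_m<b_1$ and of $b_{m+1}<g^m(0)$ to the sign of $r^mk^2-k-r$, the same application of the increasing branch of $g$ to the chain $0<a_1<x_m<b_1<b<g(x_m)$ to get the orderings, the same backward recursion for the $\hat b_i$, and the same cyclic mapping of the pieces of $U$ followed by the graph of $g^{m+1}$ on $[0,b_1)$. The only deviations are cosmetic (the paper establishes $b_1<b$ by computing $b-b_1$ directly rather than via your chain $k^2r^mx_m<kb_1<ka=k-1<k-b$, and gets $b_{m+1}<g^m(0)$ by applying $g$ to the contrary assumption), together with one harmless index slip: $g^i(0)-b_i=r^{i-1}(b-b_1)$, not $r^i(b-b_1)$.
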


\begin{proof} First we observe that 
\begin{equation}\label{kb} kr^mx_m-b_1
=kr^{m-1}x_m\frac{r^mk^2-k-r}{k^2r^{m-1}-1}=\frac{r^mk^2-k-r}{k}b_1<0\end{equation}
since $r^mk^2-k-r<0$.
Then if $b_{m+1}\ge g^m(0)$, when we apply $g$ we get
$b_1\le g^{m+1}(0)=kr^mx_m$. So we must have 
\begin{equation}\label{rel8}b_{m+1}<g^m(0).\end{equation}
\medskip

Then using \eqref{rel5} and \eqref{kb}, we have
\[ \begin{array}{rl}
b-b_1
&=b-kr^mx_m+kr^mx_m-b_1\\ \\
 &= \displaystyle 1-r(1-1/k)-kr^m+\frac{r(1-r^m)}{1-r}+\frac{r^mk^2-k-r}{k}b_1\\ \\
&=\displaystyle -\frac{1}{k}[r^mk^2-k-r]+\frac{r^2(1-r^{m-1})}{1-r}+\frac{r^mk^2-k-r}{k}b_1
\end{array}\]
so that
\[ b-b_1=-\frac{1}{k}[r^mk^2-k-r](1-b_1)+\frac{r^2(1-r^{m-1})}{1-r}>0.\]          
So we have proved $kr^mx_m<b_1<b$.
\medskip

Next since
\[ 0<a_1<x_m<b_1<b<b+rx_m=g(x_m),\]
equation \eqref{rel6} follows using \eqref{rel2}. Applying $g$ to \eqref{rel6} with $i=m-1$, we get
\[ g^{m-1}(0)<a_{m}<a<b_{m}<g^{m}(0).\]
The rest of \eqref{rel7} follows from Proposition \ref{prop8} and \eqref{rel8}.
\medskip

We prove \eqref{hatin} by backwards induction on $i$.
The range of $g$ on $(0,a)$ is $(b,1)$. From Proposition \ref{prop8}, we know that $b<a<b_{m+1}<1$. So
there exists $x\in (0,a)$ such that $g(x)=b_{m+1}$ and this $x$ is unique
since $g$ is strictly increasing on $(0,a)$. Define $\hat b_m=x$.
Then since $b_m<b_{m+1}<g^m(0)$ and $g$ is strictly increasing on
$[b_{m-1},g^{m-1}(0)]$, we have
$b_{m-1}<\hat b_m<g^{m-1}(0)$. Thus \eqref{hatin} holds for $i=m$.
\medskip

Now we assume \eqref{hatin} holds for some $i$ with $3\le i\le m$ and
prove it for $i-1$. So we know that 
\[ g(\hat b_i)=\hat b_{i+1},\quad b_{i-1}<\hat b_i<g^{i-1}(0).\]
The range of $g$ on $(b_{i-2},g^{i-2}(0))$ is $(b_{i-1},g^{i-1}(0))$.
So there exists a point $x$ in $(b_{i-2},g^{i-2}(0))$ such that $g(x)=\hat b_{i}$ and this $x$ is unique
since $g$ is strictly increasing on $(0,a)$. Define $\hat b_{i-1}=x$.
Then \eqref{hatin} holds for $i-1$ and the induction proof is complete.
\medskip

Note that $g([0,b_1))=[b,b_2)\subset (\hat b_2,b_2)$, $g((\hat b_i,b_i))=
(\hat b_{i+1},b_{i+1})$ for $i=2,\ldots m-1$, $g((\hat b_m,b_m))=(b_{m+1},1]$
and $g((b_{m+1},1])=[0,b_1)$. It follows that $g(U)\subset U$ and if $x\in U$, 
there exists $n\ge 0$ such that $y=g^n(x)\in [0,b_1)$.
If $x_m<y<b_1$, then we know from the graph of $g^{m+1}$ in Lemma \ref{lem7} that $g^{Redwood Citym+1}(y)<y$.
Since there are no fixed points of $g^{m+1}$ in $(x_m,b_1)$, it follows that
there exists $p>0$ such that $g^{p(m+1)}(y)<x_m$. Hence if $x\in U$, either
there exists $n\ge 0$ such that $g^n(x)\in [0,x_m]$ or there exists $p>0$ such that  
$g^{p(m+1)+n}(x)\in [0,x_m)$.
\end{proof}

Next we show that under the conditions of Proposition \ref{prop9}, the orbits of all
points in $[0,1]$ either eventually land in $U$ or stay on an invariant Cantor set
on which the dynamics is chaotic. 

\begin{proposition} \label{prop10} Suppose $(k,r)\in T_m$ and $r^mk^2-k-r<0$.
Let $U$ be as in Proposition \ref{prop9}.
Then there exists a Cantor set $S$ in $[0,1]\setminus U$ such that
$g(S)=S$ and such that if $x\in [0,1]\setminus U$, then either $x\in S$ or there exists
$n>0$ such that $g^n(x)\in U$. Moreover the dynamics on $S$ is chaotic.
\end{proposition}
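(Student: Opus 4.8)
The plan is to define $S$ as the set of points whose $g$-orbit never meets $U$, that is, $S=\bigcap_{n\ge 0}g^{-n}(C)$ with $C=[0,1]\setminus U$, and then to analyse the dynamics of $g$ on the finitely many components of $C$ by a first-return/symbolic argument of exactly the kind used for the logistic map with $\mu>4$ (cf. the proof of Proposition \ref{prop4}, which refers to pages 34--38 of \cite{D}). With this choice of $S$ the last assertion of the proposition is automatic: if $x\in[0,1]\setminus U$ and $x\notin S$, then $g^n(x)\notin C$, i.e. $g^n(x)\in U$, for some $n>0$. It remains to show that $S$ is a nonempty Cantor set with $g(S)=S$ on which $g$ is chaotic.

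First I would record the geometry of $C$. The ordering $b_1<\hat b_2<b_2<\hat b_3<\cdots<b_{m-1}<\hat b_m<b_m<b_{m+1}$ supplied by Proposition \ref{prop9} (via \eqref{rel6}, \eqref{rel7} and \eqref{hatin}) shows that $C$ is the disjoint union of the $m$ closed intervals $C_i=[b_i,\hat b_{i+1}]$, $1\le i\le m-1$, and $C_m=[b_m,b_{m+1}]$, with $C_i\subset[0,a]$ for $i<m$ and $C_m\subset[a,1]$. Using $g(x)=b+rx$ on $[0,a]$, $g(x)=k(1-x)$ on $[a,1]$, the relations $g(b_i)=b_{i+1}$, $g(\hat b_i)=\hat b_{i+1}$ from \eqref{hatin}, and the periodicity $g(b_{m+1})=b_{m+2}=b_1$ from Proposition \ref{prop8}, one checks that $g$ maps $C_i$ homeomorphically onto $C_{i+1}$ (slope $r$) for $1\le i\le m-1$, and maps $C_m$ homeomorphically onto $[b_1,b_{m+1}]$ (slope $-k$), this last interval being exactly the convex hull of $C$ and hence containing every $C_j$ together with the $m-1$ internal gaps $(\hat b_j,b_j)$ of $U$. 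Thus the motion through $C_1,\dots,C_{m-1}$ is forced and all branching happens at $C_m$.

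Next I would study the first return map to $C_m$. Put $B_j=\{x\in C_m:g(x)\in C_j\}$ for $1\le j\le m$; these are $m$ pairwise disjoint closed subintervals of $C_m$ (the $g|_{C_m}$-preimages of the $C_j$), and they leave $m-1$ nonempty open gaps in $C_m$ (the $g|_{C_m}$-preimages of the internal gaps of $U$), so $B_1\cup\cdots\cup B_m\subsetneq C_m$. On $B_j$ the first return time to $C_m$ is $m-j+1$, and the return map $\Phi:=g^{m-j+1}|_{B_j}$ is a homeomorphism of $B_j$ onto $C_m$ (since $g^{m-j+1}(B_j)=g^{m-j}(C_j)=C_m$) with $|\Phi'|=k\,r^{m-j}\ge k\,r^{m-1}>1$, the last inequality because $(k,r)\in T_m$ forces $k>K_m(r)>1/r^{m-1}$. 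Exactly as for the logistic map with $\mu>4$, it follows that $\tilde S=\{x\in C_m:\Phi^n(x)\in B_1\cup\cdots\cup B_m\text{ for all }n\ge 0\}$ is a nonempty Cantor set on which $\Phi$ is topologically conjugate to the full one-sided shift on $m$ symbols.

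Finally I would transfer this back to $g$. The forced dynamics on $C_1,\dots,C_{m-1}$ give $S\cap C_m=\tilde S$ and $S\cap C_i=(g^{m-i}|_{C_i})^{-1}(\tilde S)$ for $i<m$, so $S$ is a disjoint union of $m$ homeomorphic copies of $\tilde S$, hence a nonempty Cantor set; the identities $g(C_i)=C_{i+1}$ together with $g(\tilde S\cap B_j)=S\cap C_j$ yield $g(S)=S$. For chaos, the itinerary map sending $x\in S$ to the sequence $(i_n)_{n\ge 0}$ with $g^n(x)\in C_{i_n}$ conjugates $g|_S$ to the subshift of finite type on $\{1,\dots,m\}$ whose only allowed transitions are $i\to i+1$ ($i<m$) and $m\to j$ (any $j$); this subshift has a primitive transition matrix (some power is strictly positive), so it is topologically mixing and therefore chaotic in the sense of Devaney, and the itinerary map is injective because every orbit returns to $C_m$ infinitely often while the associated return maps $\Phi$ are uniformly expanding. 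I expect the middle paragraph to be the crux: it is there that the standing hypothesis $(k,r)\in T_m$ and the extra hypothesis $r^mk^2-k-r<0$ (through the ordering and the forward invariant set of Proposition \ref{prop9}) are genuinely used, to guarantee both that $g(C_m)$ overshoots onto the full hull $[b_1,b_{m+1}]$, so the $B_j$ leave real gaps in $C_m$, and that every return map to $C_m$ is expanding; once this combinatorial-geometric picture is in place, the Cantor-set and chaos conclusions are the standard symbolic-dynamics package.
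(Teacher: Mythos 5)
Your proposal is correct and its skeleton coincides with the paper's proof: the same set $S=\bigcap_{n\ge 0}g^{-n}\bigl([0,1]\setminus U\bigr)$, the same decomposition of $[0,1]\setminus U$ into the $m$ closed intervals $[b_1,\hat b_2],\dots,[b_{m-1},\hat b_m],[b_m,b_{m+1}]$ with $g$ carrying each onto the next and the last onto $[b_1,b_{m+1}]$, and the same conjugacy of $g|_S$ with the shift on the subshift $\Sigma_m$ whose allowed transitions are $i\to i+1$ for $i<m$ and $m\to j$ for any $j$. The one place you genuinely diverge is the justification that $S$ is a Cantor set and that the itinerary map is a homeomorphism. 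The paper estimates derivatives of iterates of $g$ directly along orbits in $S$, obtaining a factor of absolute value at least $k^2r^{2m-2}>K_m(r)^2r^{2m-2}>1$ over each block of the itinerary, and then invokes Lemma 4 of \cite{K} to conclude that $S$ is hyperbolic; you instead pass to the first-return map to $C_m=[b_m,b_{m+1}]$, whose $m$ affine branches map onto $C_m$ with slopes of absolute value $kr^{m-j}\ge kr^{m-1}>1$ (using $k>K_m(r)>1/r^{m-1}$), and run the standard logistic-map ($\mu>4$) construction on that induced map. The two mechanisms rest on the same inequality $kr^{m-1}>1$ coming from $(k,r)\in T_m$; your return-map packaging is somewhat more self-contained (no appeal to Kraft's hyperbolicity lemma, and the uniform expansion is visible in a single step), while the paper's version avoids introducing an induced map and works with $g$ throughout. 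You also correctly locate where the hypothesis $r^mk^2-k-r<0$ enters: only through Proposition \ref{prop9}, which supplies the ordering of the $b_i$ and $\hat b_i$ and the forward-invariant open set $U$ whose complement is the union of the $m$ intervals.
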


\begin{proof} Define 
\[ S=\{x\in [0,1]: g^n(x)\in [0,1]\setminus U,\; n\ge 0\},\]
where
\[ [0,1]\setminus U=I_1\cup I_2\cup\cdots\cup I_{m-1}\cup I_m
    =[b_1,\hat b_2]\cup [b_2,\hat b_3]\cup\cdots\cup 
    [b_{m-1},\hat b_m]\cup [b_m,b_{m+1}].\]
$S=\bigcap^{\infty}_{n=0}(g^n)^{-1}([0,1]\setminus U)$ is closed. Clearly $g(S)\subset S$.
If $x\in S$ there exists $y\in [0,1]$ such that $g(y)=x$. Then $y\in [0,1]\setminus U$,
since $y\in U$ implies $x\in U$, contradicting $x\in S$. So $y\in S$. Thus $S\subset g(S)$
and $g(S)=S$.
\medskip

We note that $g(I_j)=I_{j+1}$ for $1\le j\le m-1$ but $g(I_m)=[b_1,b_{m+1}]$,
which contains $I_1\cup I_2\cup\cdots\cup I_{m-1}\cup I_m$.
Let $\Sigma_m$ be the set of sequences $\{a_k\}^{\infty}_{k=0}$ such
that $a_k\in\{1,\ldots,m\}$ and if $a_k<m$, then $a_{k+1}=a_k+1$.
$\Sigma_m$ is invariant under the shift $\sigma$ and the dynamics of $\sigma$ on $\Sigma_m$ is chaotic. 
If $x\in S$, we define its itinerary to be the sequence $a\in\Sigma_m$ such that
$g^k(x)\in I_{a_k}$. This defines a mapping $\phi:S\to \Sigma_m$ such that 
$\phi\circ g=\sigma\circ \phi$. By standard
arguments (see, for example, pages 94-99 in \cite{D} where the case $m=2$ is considered), 
we show that $\phi$ is continuous and surjective and, furthermore, 
we may conclude that $\phi$ is a conjugacy and $S$ is a Cantor set,
provided we can show that $S$ is a hyperbolic set.
\medskip

First note that if $x\in I_m$, then $|g'(x)|=k$
but if $x\in I_j$ with $j<m$, then $|g'(x)|=r$.
Now suppose $x\in S$. If $x\in I_m$, then $|g'(x)|=k>1$. Suppose
$x\in I_j$, where $1\le j\le m-1$. Then $g^{i}(x)\in I_{i+j}$ for $0\le i\le m-j$.
In particular, $g^{m-j}(x)\in I_{m}$. So $g^{m-j+1}(x)\in I_{\ell}$ for some $\ell$, $1\le \ell\le m$.
Suppose (a) $g^{m-j+1}(x)\in I_m$ also. Then $|(g^{m-j+2})'(x)|=r^{m-j}k^2$ which is $>1$, since
\[ k^2r^{m-j}\ge k^2r^{m-1}>k^2r^{2m-2}>K^2_m(r)r^{2m-2}>1.\]
Otherwise (b) $g^{m-j+1}(x)\in I_{\ell}$, where $1\le \ell\le m-1$.
Then $g^{m-j+1+p}(x)\in I_{\ell+p}$ for $0\le p\le m-\ell$. It followsRedwood City
that 
\[ |(g^{m-j+1+m-\ell+1})'(x)|=r^{m-j}kr^{m-\ell}k=k^2r^{2m-j-\ell}
\ge k^2r^{2m-2}>1.\] 
Hence, by Lemma 4 in \cite{K}, $S$ is hyperbolic and the proof is complete.

\end{proof}

\subsection{Attracting periodic orbit in $R_{m1}$}

\begin{proposition} \label{prop6} 
If $(k,r)\in R_{m1}=\{(k,r)\in T_m, k<1/r^m\}$, 
 $a_1$ from Proposition \ref{prop8} is an attracting periodic point for $g$ in \eqref{2}. Moreover the open set $U$
from Proposition \ref{prop9} is contained in its domain of attraction and all points in $[0,1]$
are attracted to the periodic orbit except those in the Cantor set $S$. 
\end{proposition}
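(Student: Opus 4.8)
The plan is to read the local behaviour of $a_1$ straight off the formula for $g^{m+1}$ in Lemma \ref{lem7}, then to propagate attraction to all of $U$ via Proposition \ref{prop9}, and finally to mop up the remaining points with Proposition \ref{prop10}.

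First I would check that the standing hypothesis of Propositions \ref{prop9} and \ref{prop10} is automatic on $R_{m1}$: since $kr^m<1$ and $r>0$, we get $r^mk^2=(kr^m)k<k<k+r$, i.e. $r^mk^2-k-r<0$. So both propositions apply. Next, by Lemma \ref{lem7}, on $[0,x_m]$ the map $g^{m+1}$ is affine with slope $-kr^m$, and since $a_1$ is its fixed point there (Proposition \ref{prop8}) one has $g^{m+1}(x)-a_1=-kr^m(x-a_1)$, hence $(g^{m+1})^n(x)-a_1=(-kr^m)^n(x-a_1)$ for all $x\in[0,x_m]$. Because $kr^m<1$ on $R_{m1}$, this shows $g^{m+1}$ maps $[0,x_m]$ into itself and $(g^{m+1})^n(x)\to a_1$ for every $x\in[0,x_m]$; by continuity of $g$ the full $g$-orbit of such an $x$ then accumulates exactly on $\{a_1,\dots,a_{m+1}\}$. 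In particular $a_1$ is an attracting periodic point of minimal period $m+1$ and $[0,x_m]$ lies in its basin.

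Now I would extend this to $U$. By Proposition \ref{prop9}, for each $x\in U$ there is $n\ge0$ with $g^n(x)\in[0,x_m]$, so by the previous step $x$ is attracted to the periodic orbit; thus $U$ lies in the domain of attraction. Also $g(U)\subset U$ (Proposition \ref{prop9}) and $a_1\in[0,b_1)\subset U$, so the whole periodic orbit stays in $U$ and is therefore disjoint from the Cantor set $S\subset[0,1]\setminus U$ of Proposition \ref{prop10}. Finally, take any $x\in[0,1]$ with $x\notin S$. If $x\in U$ we are done by the above; otherwise $x\in[0,1]\setminus U$, and Proposition \ref{prop10} gives $n>0$ with $g^n(x)\in U$, whence again $x$ is attracted to the orbit of $a_1$. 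Hence every point of $[0,1]$ except those in $S$ is attracted to the periodic orbit.

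The only genuine subtlety — the ``main obstacle'', such as it is — is confirming that $R_{m1}$ really lies inside the parameter regions where Propositions \ref{prop9} and \ref{prop10} hold, i.e. the one-line inequality $r^mk^2-k-r<0$; once that is in place the argument is a routine assembly of results already proved, with the contraction estimate on $[0,x_m]$ being completely explicit.
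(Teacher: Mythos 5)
Your proof is correct and follows essentially the same route as the paper: verify $r^mk^2-k-r<0$ from $kr^m<1$, get local attraction of $a_1$ from the slope $-kr^m$ of $g^{m+1}$ on $[0,x_m]$ in Lemma \ref{lem7}, funnel $U$ into $[0,x_m]$ via Proposition \ref{prop9}, and handle $[0,1]\setminus U$ via Proposition \ref{prop10}. The only difference is that you spell out the one-line inequality and the explicit contraction estimate, which the paper leaves implicit.
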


\begin{proof} 
Since $|(g^{m+1})'(a_1)|=kr^m$ and $kr^m<1$, $a_1$ is an attracting fixed point 
of $g^{m+1}$.
Since $kr^m<1$, we have $r^mk^2-k-r<0$. So by Proposition \ref{prop9},
for each $x\in U$ there exists $n\ge 0$ such that $g^n(x)\in [0,x_m]$.
Then we see from the graph of $g^{m+1}$ in Lemma \ref{lem7} that $g^{\ell(m+1)}(g^n(x))\to a_1$
as $\ell\to\infty$. So $x$ is in the domain of attraction of the orbit of $a_1$.
It also follows from Proposition \ref{prop10} that the only points not
attracted to the periodic orbit are those in the Cantor set $S$.
\end{proof}

\subsection{Chaotic band attractors in $R_{m2}$ and $R_{m3}$}

\subsubsection{A band attractor for $g$ in $R_{m2}$ and $R_{m3}$}

First we show the existence of a band attractor for $g$ as in \eqref{2}, when $kr^m>1$ and $r^mk^2-k-r<0$.

\begin{proposition} \label{prop7} If $(k,r)\in T_m$, $kr^m>1$ and $r^mk^2-k-r<0$, 
\medskip

{\rm (i)} the inequalities
\begin{equation}\label{lotin} x_m< kr^mx_m<b_1<b\end{equation} 
hold and  
\[g^{m+1}([0,kr^mx_m])=[0,kr^mx_m],\] 
where $x_m$ is as defined in \eqref{rel3} and $b_1$ is as in Proposition \ref{prop8}.
\medskip

{\rm (ii)} With $p=kr^mx_m$,
the intervals $g^i([0,p])$ are disjoint for $i=0,\ldots,m$ and if we define  
\[ \Lambda= \bigcup^{m}_{i=0}g^i([0,p]),\]
then $g(\Lambda)=\Lambda$ and the orbits of all points 
in $[0,1]$ eventually land in $\Lambda$ except those in the Cantor set $S$
from Proposition \ref{prop10}.
\end{proposition}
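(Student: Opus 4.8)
The plan is to exploit the explicit formula for $g^{m+1}$ in Lemma \ref{lem7} together with the positional information about the periodic points $a_1, b_1, \dots$ and the quantity $x_m$ already established in Propositions \ref{prop8}--\ref{prop10}. For part (i), I would first verify the chain of inequalities \eqref{lotin}. The inequality $x_m < kr^mx_m$ is immediate from $kr^m>1$; the inequality $kr^mx_m < b_1$ is exactly \eqref{kb} from the proof of Proposition \ref{prop9} (it holds whenever $r^mk^2-k-r<0$); and $b_1<b$ is the first displayed inequality of Proposition \ref{prop9}. So \eqref{lotin} is essentially free. To show $g^{m+1}([0,kr^mx_m])=[0,kr^mx_m]$, I would read off the graph of $g^{m+1}$ from Lemma \ref{lem7}: on $[0,x_m]$ it is the decreasing line $-kr^m(x-x_m)$ taking $0\mapsto kr^mx_m$ and $x_m\mapsto 0$; then on $[x_m, x_m+1/(k^2r^{m-1})]$ it is the increasing line $k^2r^{m-1}(x-x_m)$ starting from $0$. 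The key point is to check that $kr^mx_m$ lies in this second segment, i.e. $kr^mx_m \le x_m+1/(k^2r^{m-1})$, equivalently $x_m(kr^m-1)\le 1/(k^2r^{m-1})$; this is a short computation using $x_m<b_1<x_m+1/(k^2r^{m-1})$ (Proposition \ref{prop8}) combined with $kr^mx_m<b_1$. Granted that, the image of $[x_m, kr^mx_m]$ under $g^{m+1}$ is $[0, k^2r^{m-1}(kr^m-1)x_m]$, and one checks this equals $[0,kr^mx_m]$ by verifying $g^{m+1}(kr^mx_m)=kr^mx_m$ — but in fact this is automatic, since $g^{m+1}$ maps the second linear segment onto an interval one of whose endpoints is $0$ and whose graph crosses the diagonal only at the fixed point $b_1 > kr^mx_m$; alternatively one simply evaluates. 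So $g^{m+1}([0,kr^mx_m]) = g^{m+1}([0,x_m]) \cup g^{m+1}([x_m,kr^mx_m]) = [0,kr^mx_m]\cup[0,\,\cdot\,]$, and I would pin down the second piece to be contained in $[0,kr^mx_m]$ using the endpoint value at $kr^mx_m$.

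For part (ii), set $p = kr^mx_m$. The invariance $g(\Lambda)=\Lambda$ is immediate: $g(g^i([0,p])) = g^{i+1}([0,p])$ for $i=0,\dots,m-1$, and $g(g^m([0,p])) = g^{m+1}([0,p]) = [0,p]$ by part (i), so $g$ cyclically permutes the $m+1$ sets and $g(\Lambda)=\Lambda$. The disjointness of the $g^i([0,p])$, $i=0,\dots,m$, I would get by tracking the images using \eqref{rel1}, \eqref{rel2} and \eqref{gmform}: since $[0,p]\subset[0,b_1)\subset[0,b)$, for $1\le i\le m-1$ we have $g^i([0,p])$ sitting inside $[g^i(0), g^i(x_m)]$ (an interval where $g^i$ is affine increasing), and these nest between consecutive $g^{i-1}(0)$ and $g^i(0)$ values exactly as the strict chain in \eqref{rel6}--\eqref{rel7} shows; the last one $g^m([0,p])$ lands near the top of $[0,1]$, again separated from the rest by \eqref{rel7}. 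More carefully, since $0<p<b_1$, the orbit-intervals $g^i([0,p])$ are contained in the intervals $[a_i, b_i]$-type neighbourhoods of the two periodic orbits, and \eqref{rel6}, \eqref{rel7} give the pairwise disjointness directly. Finally, to see that every point of $[0,1]$ eventually lands in $\Lambda$ except points of the Cantor set $S$: by Proposition \ref{prop10}, any $x\in[0,1]$ is either in $S$ or some iterate $g^n(x)\in U$; and by Proposition \ref{prop9}, any point of $U$ has a further iterate in $[0,x_m]\subset[0,p]\subset\Lambda$. Hence every point not in $S$ eventually enters $\Lambda$, and since $g(\Lambda)=\Lambda$ it stays there.

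I expect the only mildly delicate step to be the verification in part (i) that $kr^mx_m$ falls in the second (increasing) linear segment of $g^{m+1}$, i.e. the inequality $kr^mx_m \le x_m + 1/(k^2r^{m-1})$, and the accompanying check that $g^{m+1}([x_m,kr^mx_m])\subseteq[0,kr^mx_m]$ so that the union really closes up to exactly $[0,p]$. Both reduce to polynomial inequalities in $k,r$ under the standing hypotheses $(k,r)\in T_m$, $kr^m>1$, $r^mk^2-k-r<0$, using the closed form \eqref{rel5} for $x_m$ and \eqref{altkm} for $K_m(r)$, so they are routine once set up; everything else is bookkeeping with the already-established inequalities \eqref{rel6}--\eqref{rel7} and \eqref{kb}. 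The disjointness in part (ii) is the other place where one must be a little careful about strictness, but Propositions \ref{prop8} and \ref{prop9} were precisely engineered to supply the strict inequalities needed.
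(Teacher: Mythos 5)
Your proposal is correct and follows essentially the same route as the paper: \eqref{lotin} is read off from $kr^m>1$, \eqref{kb} and Proposition \ref{prop9}; the identity $g^{m+1}([0,p])=[0,p]$ comes from the graph in Lemma \ref{lem7} once one checks $kr^mx_m<b_1<x_m+1/(k^2r^{m-1})$ and that $g^{m+1}(kr^mx_m)-kr^mx_m=kr^{m-1}(r^mk^2-k-r)x_m<0$; disjointness uses $0<p<b_1<b$ together with \eqref{rel2} and \eqref{rel6}--\eqref{rel7}; and the attraction statement chains Propositions \ref{prop10} and \ref{prop9} exactly as the paper does. The only slips are cosmetic (e.g.\ $g^i([0,p])$ is not contained in $[g^i(0),g^i(x_m)]$ since $p>x_m$, though your subsequent appeal to $p<b_1$ and the chain of inequalities repairs this), and the ``routine'' endpoint check you defer is precisely the displayed computation the paper carries out.
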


\begin{proof}

(i) $k>1/r^m$ implies that $x_m<kr^mx_m$. Then we see that the rest of \eqref{lotin} follows
from Proposition \ref{prop9}. Next we consider 
\[\begin{array}{rl}
g^{m+1}(kr^mx_m)-kr^mx_m
&=k^2r^{m-1}(kr^mx_m-x_m)-kr^mx_m\\ \\
& =kr^{m-1}[r^mk^2-k-r]x_m\\ \\
&<0,\end{array}\]
where we have used Lemma \ref{lem7}. Thus  
\begin{equation}\label{gmk}g^{m+1}(kr^mx_m)<kr^mx_m.\end{equation} 
Then since $x_m<kr^mx_m<b_1<x_m+\frac{1}{k^2r^{m-1}}$ and looking at the graph of $g^{m+1}$
in Lemma \ref{lem7}, we see that $g^{m+1}([0,kr^mx_m])=[0,kr^mx_m]$
follows at once from \eqref{gmk}.
\medskip

(ii) Since from \eqref{lotin}, we have $0<p<b_1<b=g(0)<a$, it follows using \eqref{rel2} that
\begin{equation}\label{yeq3}g^{i-1}(0)<g^{i-1}(p)<b_i<g^i(0)\end{equation}
for $i=1,\ldots,m$.
This shows the disjointness of $g^i([0,p])$ for $i=0,\ldots,m-1$
and $g^i([0,p])=[g^i(0),g^i(p)]$ for the same $i$. 
Then we have 
 \[g^{m}([0,p])=g(g^{m-1}([0,p])=g([g^{m-1}(0), g^{m-1}(p)])= [\min\{g^{m}(0), g^{m}(p)\},1],\]
 since  
 \begin{equation}\label{gm1} g^{m-1}(0)<a=g^{m-1}(x_m)<g^{m-1}(p).\end{equation}
Using the formula \eqref{gmform} for $g^{m}$, we have
\[ g^{m}(0)-g^{m}(p)=1-r^{m}x_{m}-(1-kr^{m-1}(p-x_{m}))=r^{m-1}(r^{m}k^{2}-k-r)x_{m}<0.\]
Hence
 \begin{equation}\label{gmp}g^{m}([0,p])=[g^{m}(0),1].\end{equation}
Next since by \eqref{rel2}, $g^{m-1}$ is increasing on $[0,b+rx_m]$ and $0<p<b$,
it follows that
\[ g^{m-1}(p) < g^{m-1}(b)=g^m(0).\]
Thus $g^{m}([0,p])$ lies strictly to
the right of all the intervals $g^{i}([0,p])$, $i=0,\ldots m-1$.
Hence the intervals $g^i([0,p])$ are disjoint for $i=0,\ldots,m$.
\medskip

Next 
\[\begin{array}{rl}
 g(\Lambda)
 &=\displaystyle\bigcup^{m+1}_{i=1}g^i([0,p])=\bigcup^{m}_{i=1}g^i([0,p])\cup g^{m+1}([0,p])
=\bigcup^{m}_{i=1}g^i([0,p])\cup [0,p]\\ \\
&=\displaystyle\bigcup^{m}_{i=0}g^i([0,p])\\ \\
&=\Lambda.\end{array}\]

Since by \eqref{hatin} and \eqref{yeq3}, $\hat b_{i+1}<g^i(0)$ for $i=1,\ldots,m-1$ and
$g^i(p)<b_{i+1}$ for $i=0,\ldots,m-1$, it follows
that $[0,p]\subset [0,b_1)$, $g^i([0,p])=[g^i(0),g^i(p)]\subset (\hat b_{i+1},b_{i+1})$
for $i=1,\ldots,m-1$. Also, using \eqref{gmp} and \eqref{rel7},
$g^m([0,p])=[g^m(0),1]\subset (b_{m+1},1]$.
Hence $\Lambda\subset U$.
Now suppose $x\in U$. Then by Proposition \ref{prop9},
there exists $n\ge 0$ such that $g^n(x)\in [0,x_m]\subset \Lambda$.
Finally by Proposition \ref{prop10}, it follows that if $x\in [0,1]$,
then either $x\in S$ or its orbit eventually lands in $\Lambda$.
\end{proof}

\subsubsection{Dynamics on the attractor in $R_{m2}$ and $R_{m3}$}

Finally we determine the dynamics of $g$ in \eqref{2} on $\Lambda$, 
the invariant set from Proposition \ref{prop7}.

\begin{proposition} \label{prop11} 
\medskip

{\rm (i)} If $(k,r)$ is in  
\[ R_{m2}=\{(k,r)\in T_m: kr^m>1,\; r^mk^2-k-r<0,\;r^{2m}k^3-k-r>0\},\] 
$g$ is chaotic on $\Lambda=\bigcup^m_{i=0}g^i([0,kr^mx_m])$;
\medskip

{\rm (ii)} if $(k,r)$ is in
\[ R_{m3}=\{(k,r)\in T_m: kr^m>1,\;  r^mk^2-k-r<0,\;r^{2m}k^3-k-r<0\},\]
$g$ is chaotic on the union of the disjoint intervals 
\[\Lambda_1=\bigcup^{2m+1}_{i=0}g^i([0,k^2r^{m-1}(kr^m-1)x_m])\subset \Lambda\]
and if $x\in \Lambda\setminus\Lambda_1$, there exists $n\ge 0$ such that
$g^n(x)\in \Lambda_1$ except for those $x$ on the orbit of 
the periodic point $a_1=kr^mx_m/(kr^m+1)$. $\Lambda_1$ is obtained from
$\Lambda$ by removing an interval from the middle of each interval
in $\Lambda$. 
\end{proposition}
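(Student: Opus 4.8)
The plan is to reduce both parts to the chaotic-interval machinery already used in Lemma \ref{lem2}(v) and Proposition \ref{prop3}, by identifying the appropriate iterate of $g$ that acts as a full tent-like map on the relevant subinterval, and then to spread the chaos around the cycle of intervals by the conjugacy-via-iterate trick used throughout Section 4. For part (i), the key observation is that $g^{m+1}$ maps $[0,p]$ onto itself (Proposition \ref{prop7}(i)), and by Lemma \ref{lem7} its graph on $[0,p]=[0,kr^mx_m]$ consists of a decreasing piece on $[0,x_m]$ with slope $-kr^m$ followed by an increasing piece on $[x_m,p]$ with slope $k^2r^{m-1}$ — i.e.\ a ``skew tent'' with vertex at $x_m$. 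First I would affinely conjugate $[0,p]$ to $[0,1]$ via $h(x)=x/p$, obtaining a map of the form \eqref{1} (after reflecting so the kink is at $0$), with slopes whose magnitudes are $kr^m$ and $k^2r^{m-1}$; one checks these play the roles of $r$ and $k$ in Bassein's criterion. Then I would verify the hypotheses of Propositions 2 and 4 in \cite{B} (equivalently, the inequalities appearing in Lemma \ref{lem2}(v)): the condition $\tilde B-1/(2-\tilde A)<0$ translates, after the routine algebra, into exactly $r^{2m}k^3-k-r>0$, which is the defining inequality of $R_{m2}$. This gives that $g^{m+1}$ is chaotic on $[0,p]$. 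Finally, since $g^i$ is a homeomorphism from $[0,p]$ onto $g^i([0,p])$ for $0\le i\le m$ (the only nonlinearity of $g$ on $[0,1]$ is at $a$, and by \eqref{yeq3} together with $g^{m-1}(0)<a<g^{m-1}(p)$ the image $g^{m-1}([0,p])$ straddles $a$, but $g$ is still one-to-one on each $g^i([0,p])$ for $i\le m-1$ because these intervals are disjoint and avoid straddling the vertex until the last step), the standard argument — conjugating $g$ restricted to $\Lambda$ with the cyclic permutation of the pieces, as in Lemma \ref{lem2}(v) and the proof of Lemma \ref{lem3}(vii) — promotes chaos of $g^{m+1}$ on one piece to chaos of $g$ on the union $\Lambda$.

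For part (ii), the situation is the renormalization step: when $r^{2m}k^3-k-r<0$, the map $g^{m+1}$ on $[0,p]$ is a skew tent to which Lemma \ref{lem2} itself applies (with parameters $\hat r=kr^m$, $\hat k=k^2r^{m-1}$, and one verifies $\hat r>1/\hat k$ and $\hat r\hat k^2-\hat k-\hat r\le 0$, the latter being $r^{2m}k^3-k-r\le 0$). Then Lemma \ref{lem2}(ii)--(iv) applied to (the conjugate of) $g^{m+1}$ splits $[0,p]$ into two subintervals, one of them being $[0,q]$ with $q=k^2r^{m-1}(kr^m-1)x_m$ — this number is precisely $p$ minus the ``escape'' region, i.e.\ the point analogous to $g(b)$ in Lemma \ref{lem2}(ii), and the arithmetic that $g^{m+1}$ maps $[0,q]$ and the other piece onto each other while $(g^{m+1})^2=g^{2m+2}$ fixes $[0,q]$ comes straight from Lemma \ref{lem2}(iii). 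Lemma \ref{lem2}(iv) gives that every point of $[0,p]$ except the fixed point of $g^{m+1}$ (which is $a_1$, by Proposition \ref{prop8}(i) — note $a_1$ is attracting-neutral for $g^{m+1}$ exactly on the boundary, but here $kr^m>1$ so $a_1$ is unstable for $g^{m+1}$ and the only exceptional orbit is its own) eventually lands in $[0,q]\cup(\text{the other piece})$, hence in $\Lambda_1$; spreading this around by the $g^i$, $0\le i\le 2m+1$, and checking disjointness of the $2m+2$ intervals $g^i([0,q])$ exactly as in Proposition \ref{prop7}(ii), yields the claimed structure. Chaos of $g^{2m+2}$ on $[0,q]$ then follows from Lemma \ref{lem2}(v) applied to $g^{m+1}$ (whose hypothesis $\hat r^2\hat k^3-\hat k-\hat r>0$ I would need to re-express; in $R_{m3}$ this will hold automatically because we are in the ``chaotic after one renormalization'' sub-subcase, or else a further renormalization is needed — here I expect the paper either restricts attention or iterates the dichotomy, and I would follow whichever is done for $g$ on $[0,1]$ in Section 4), and this spreads to chaos of $g$ on $\Lambda_1$.

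The main obstacle I anticipate is the bookkeeping of the affine conjugacy for $g^{m+1}$ on $[0,p]$: one must correctly orient the interval (the vertex $x_m$ is interior, not at an endpoint, so the piecewise structure ``decreasing then increasing'' must be matched to the normal form \eqref{1} by a reflection, after which the left slope becomes $+kr^m$ and the right slope $-k^2r^{m-1}$ or vice versa), and then verify that the Bassein-type inequalities $\frac{1-\tilde B}{\tilde A}>1$ and $\tilde B-\frac1{2-\tilde A}<0$ reduce, after clearing denominators, to the stated polynomial conditions $r^{2m}k^3-k-r\gtrless 0$ — this is purely mechanical but error-prone. A secondary subtlety is confirming that $g^i$ is genuinely injective on each $g^i([0,p])$ (resp.\ $g^i([0,q])$) for the full range $0\le i\le m$ (resp.\ $0\le i\le 2m+1$): the disjointness established in Proposition \ref{prop7}(ii) plus the fact that only the last application of $g$ in the cycle sees the vertex $a$ is what makes the conjugacy-of-iterates argument go through, and I would state this carefully rather than wave at it. Everything else is a direct invocation of Lemma \ref{lem2}, Lemma \ref{lem7}, and Propositions \ref{prop8}--\ref{prop10}.
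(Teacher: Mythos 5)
Your proposal follows essentially the same route as the paper's proof: both parts rest on conjugating $g^{m+1}$ on $[0,p]$, $p=kr^mx_m$, to the normal form via the orientation-reversing affine map $H(x)=p(1-x)$, which produces slopes $R=k^2r^{m-1}$ (increasing piece) and $K=kr^m$ (decreasing piece); part (i) is then Proposition \ref{prop3} applied to the conjugate (the condition $RK^2-K-R>0$ being equivalent to $r^{2m}k^3-k-r>0$), and part (ii) is Lemma \ref{lem2} applied to the conjugate, with $p_1=H(B)=k^2r^{m-1}(kr^m-1)x_m$, the exceptional orbit $H(K/(K+1))=a_1$, and the same disjointness and chaos-spreading arguments you describe.

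Two points. First, a labeling slip: in (ii) you set $\hat r=kr^m$ and $\hat k=k^2r^{m-1}$, but it is the increasing slope $k^2r^{m-1}$ that plays the role of $r$ in \eqref{2} and the decreasing slope $kr^m$ that plays the role of $k$; with your assignment $\hat r\hat k^2-\hat k-\hat r$ is proportional to $k^4r^{2m-1}-k-r$, not to $r^{2m}k^3-k-r$ as you assert (your asserted polynomial is what the correct assignment yields, so the conclusion survives the swap). Second, and more substantively, you leave unresolved whether the hypothesis of Lemma \ref{lem2}(v) for the renormalized map, namely $R_1K_1^2-K_1-R_1>0$, i.e.\ $k^6r^{4m-1}-k-r>0$, holds throughout $R_{m3}$ or forces a further renormalization. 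The paper shows it holds automatically: since $kr^m>1$ one has $k^6r^{4m-1}-k-r>k^2/r-k-r=r\bigl((k/r)^2-(k/r)-1\bigr)>0$, because $k>K_m(r)\ge 1+1/r>2>2r>r(1+\sqrt{5})/2$. Supplying this one elementary estimate (and fixing the $\hat r$/$\hat k$ bookkeeping) closes the only real gaps in your argument; everything else matches the paper's proof.
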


\begin{proof}  Note in both (i) and (ii) we have $r^mk^2-k-r<0$. 
Then by Proposition \ref{prop7}, $g^{m+1}([0,p])=[0,p]$ 
with $p=kr^mx_m$. If we define $H:[0,1]\to [0,p]$ by $H(x)=kr^mx_m(1-x)$, then
using Lemma \ref{lem7} noting that $0\le H(x)\le kr^mx_m$, where by \eqref{lotin}
$x_m<kr^mx_m<b_1<x_m+1/(k^2r^{m-1})$, we find that
$G=H^{-1}g^{m+1}H:[0,1]\to[0,1]$ is given byRedwood City
\begin{equation}\label{defG}
 G(x)=\begin{cases} B+Rx & (0\le x\le A)\\ 
                                          K(1-x) & (A\le x\le 1),\end{cases}\end{equation}
where
\begin{equation}\label{yeq4} R=k^2r^{m-1},\quad K=kr^m,\quad A=1-1/K,\quad B=1-RA.
\end{equation}
We see that
\[ K=kr^{m}>1,\quad R=k^{2}r^{m-1}>K>1,\quad \frac{K}{K-1}-R=-\frac{K(r^{m}k^{2}-k-r)}{r(K-1)}>0\]
What we have just done holds for both (i) and (ii).
\medskip

Now we prove (i). Then 
\begin{equation}\label{yeq5} R-\frac{K}{K^{2}-1}=\frac{K(r^{2m}k^{3}-k-r)}{r(K^{2}-1)}>0.
\end{equation}
This means that $K>1$ and $\max\{1,K/(K^{2}-1)\}<R<K/(K-1)$
so that it follows from what we have proved for $g$ in the proof of Proposition \ref{prop3}
that $G$ is chaotic on $[0,1]$ and hence that $g^{m+1}$ is chaotic on $[0,p]$. 
Then it follows that $g$ is chaotic on the union of the disjoint intervals $g^i([0,p])$ for $0\le i\le m$.
Thus (i) is proved.
\medskip

Now we prove (ii). Then 
\[ k^6r^{4m-1}-k-r>\frac{k^2}{r}-k-r
=r\left[\left(\frac{k}{r}\right)^2-\left(\frac{k}{r}\right)-1\right]>0\]
if $k>r(1+\sqrt{5})/2$. However $k>K_m(r)\ge 1+1/r>2>2r>r(1+\sqrt{5})/2$.
Hence if $(k,r)\in R_{m3}$,
\begin{equation}\label{yeq6} k^6r^{4m-1}-k-r>0.\end{equation}
Next since $k>1/r^m$, for $K$ and $R$ in \eqref{yeq4},
we have $K=kr^m>1$
and $R=k^2r^{m-1}>1$ and since $k^3r^{2m}-k-r<0$, using \eqref{yeq5} we have $R< K/(K^2-1)$. 
Then by Lemma \ref{lem2} applied to $G$ defined in \eqref{defG},
\begin{equation}\label{yeq7} 0<G(B)=K(1-B)<\frac{K}{K+1}<B<1,\end{equation}
\begin{equation}\label{yeq8}G([B,1])=[0,K(1-B)],\quad G([0,K(1-B)])=[B,1],\quad G^2([B,1])=[B,1],\end{equation}
and 
\[ G^2(x)=\begin{cases} B_2+R_1(x-B) & (B\le x\le A_1)\\ 
                        B+K_1(1-x) & (A_1\le x\le 1),\end{cases}\]
where $G^2(A_1)=1$, $B_2=G^{4}(1)$, $R_1=K^2$, $K_1=RK$. 
Also if
$x$ is in $[0,1]$, $x\neq K/(K+1)$, we have $G^n(x)\in [0,G(B)]\cup[B,1]$
for sufficiently large $n>0$. Next since by \eqref{yeq6}
\[ R_1K^2_1-K_1-R_1=k^2r^{2m-1}(k^6r^{4m-1}-k-r)>0,\]
$G^2$ is chaotic on $[B,1]$.
\medskip

Now we see what these conclusions about $G$ meRedwood Cityan for $g^{m+1}=HGH^{-1}$.
First if 
\[p=kr^mx_m,\quad p_1=H(B)=kr^mx_m(1-B)=k^2r^{m-1}(kr^m-1)x_m\]
and
\[ p_2=H(G(B))=kr^mx_m(1-K(1-B))=p-kr^mp_1,\]
we have, using \eqref{yeq7}, $0<p_1<p_2<p$ and, using \eqref{yeq8}, 
\[g^{m+1}([0,p_1])=[p_2,p],\quad g^{m+1}([p_2,p])=[0,p_1],\quad
 g^{2m+2}([0,p_1])=[0,p_1].\]
Next if $x\in [0,p]$, $x\neq H(K/(K+1))=a_1$ (see Proposition \ref{prop8}), then 
$g^{n(m+1)}(x)\in [0,H(B)]\cup[H(G(B)),kr^mx_m]=[0,p_1]\cup[p_2,p]\subset \Lambda_1$ for sufficiently large $n>0$.
Also $g^{2m+2}$ is chaotic on $H([B,1])=[0,kr^mx_m(1-B)]=[0,p_1]$.
\medskip

Suppose now that $x\in\Lambda$ and is not on the orbit of $a_1$. Then we have $x=g^i(y)$ for
some $i$, $0\le i\le m$, and $y\in [0,p]$, $y\neq a_1$. Then 
$g^{n(m+1)}(y)\in\Lambda_1$ and hence $g^{n(m+1)+i}(x)\in\Lambda_1$
for sufficiently large $n>0$.
\medskip

Now we show that the intervals $g^i([0,p_1])$ are disjoint for $i=0,\ldots,2m+1$.
Since $g^i([0,p_1])\subset g^i([0,p])$ for $i=0,\ldots,m$ 
and $g^{m+1+i}([0,p_1])=g^i([p_2,p])\subset g^i([0,p])$ for $i=0,\ldots,m$,
we need only show that $g^{m+1+i}([0,p_1])$ and $g^i([0,p_1])$ are disjoint
for $i=0,\ldots,m$. But if
$g^{m+1+i}([0,p_1])\cap g^i([0,p_1])\neq\emptyset$, applying $g^{m+1-i}$, we have
$[0,p_1]\cap [p_2,1]\neq\emptyset$, which is absurd. 
Hence the intervals $g^i([0,p_1])$ are disjoint for $i=0,\ldots,2m+1$.
\medskip

Note that for $i=1,\ldots, m-1$, $g^{i}$ is strictly increasing on 
$[0,p]$ because of \eqref{rel2} and because $p<b<b+rx_{m}$ by \eqref{lotin}.
Hence since $0<p_1<p_2<p$, we have
\[ g^{i}(0)<g^{i}(p_{1})<g^{i}(p_{2})<g^{i}(p),\quad i=0,\ldots,m-1.\]
It follows that for $i=0,\ldots,m-1$, the intervals
$g^{i}([0,p_1])=[g^{i}(0),g^{i}(p_{1})]$ and $g^{m+1+i}([0,p_1])
=g^i([p_2,p])=[g^{i}(p_{2}),g^{i}(p)]$ are obtained from 
$g^i([0,p])=[g^{i}(0),g^{i}(p)]$ by removing a middle interval.
Next note that $g^m([0,p_1])$ and $g^m([p_2,p])$ are disjoint
because their respective images under $g$ are $[p_2,p]$ and $[0,p_1]$.
Also $g^m([0,p_1])$ contains $g^m(0)$, which is the left endpoint
of $g^m([0,p])=[g^m(0),1]$ (see Eq. \eqref{gmp}), and $g^m([p_2,p])$ contains $1$ 
since $g^{m+1}([p_2,p])=[0,p_1]$ contains $0$. Hence
$g^m([0,p_1])$ and $g^{2m+1}([0,p_1])=g^m([p_2,p])$ are obtained from
$g^m([0,p])$ by removing a middle interval.
\medskip

Finally since $g^{2m+2}$ is chaotic on $[0,p_1]$,
it follows that $g$ is chaotic on the union $\Lambda_1$ of the intervals
$g^i([0,p_1])$ for $i=0,\ldots,2m+1$. 
\end{proof}

\subsection{Chaos in $R_{m4}$}

\begin{proposition}\label{prop12} When
	\[ (k,r)\in R_{m4}=\{(k,r)\in T_m: r^mk^2-k-r>0\},\]
	the map $g$ in \eqref{2} is chaotic on $[0,1]$.
\end{proposition}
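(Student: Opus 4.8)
\emph{Plan.} I would deduce chaos from topological transitivity of $g$ on $[0,1]$: a transitive continuous self-map of a compact interval automatically has a dense set of periodic points and is sensitive, hence is chaotic in the sense of Devaney (alternatively, density of periodic points and sensitivity can be read off once one knows, as below, that $g^{m+1}$ is expanding on each of its affine pieces). So it suffices to show that every nondegenerate subinterval $J\subseteq[0,1]$ has an iterate $g^{N}(J)=[0,1]$. The criterion from \cite{B} used in Propositions \ref{prop2} and \ref{prop3} is \emph{not} available here, because on $T_m$ one has $1-a<b<a$ (indeed $b-(1-a)=(1-r)a>0$ and $b<a$), so the argument must be made directly. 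The hypothesis $r^{m}k^{2}-k-r>0$ is used twice; to begin with, it forces $kr^{m}>1$, since $r^{m}k^{2}>k+r>k$ gives $r^{m}k>1$.

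\emph{Step 1 (global expansion forces $a$ into the interior).} I would show that for every nondegenerate $J$ some $g^{n}(J)$ contains $a=1-1/k$ in its interior. If not, then each $g^{n}(J)$ lies wholly in $[0,a]$ or wholly in $[a,1]$, so $g^{m+1}$ is affine on $J$; moreover at least one of $g^{0}(J),\dots,g^{m}(J)$ lies in $[a,1]$, for otherwise $g^{i}$ is affine on $J$ for $0\le i\le m$ and by \eqref{yeq2} every $x\in J$ satisfies $g^{m}(x)=(1+r+\cdots+r^{m-1})b+r^{m}x\ge g^{m}(0)>a$, contradicting $g^{m}(J)\subseteq[0,a]$. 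Hence the constant slope of $g^{m+1}|_{J}$ has absolute value $\ge r^{m}k=kr^{m}>1$, and iterating, $|g^{(n+1)(m+1)}(J)|\ge(kr^{m})^{n}|J|\to\infty$ — impossible. Once $a\in\mathrm{int}(g^{n}(J))$ we get $1\in g^{n+1}(J)$, so $g^{n+1}(J)=[y,1]$ with $y<1$, and then $g^{n+2}(J)$ is an interval containing $g(1)=0$, i.e.\ $g^{n+2}(J)=[0,z]$ for some $z\in(0,1]$ (if $z=1$ we are done).

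\emph{Step 2 (the innermost band overflows).} Here I would use the formula for $g^{m+1}$ on $[0,b+rx_{m}]$ from Lemma \ref{lem7}: since $g^{m+1}(x_{m})=0$ and $g^{m+1}(x_{m}+\tfrac{1}{k^{2}r^{m-1}})=1$, the intermediate value theorem gives $g^{m+1}([0,\,x_{m}+\tfrac{1}{k^{2}r^{m-1}}])=[0,1]$. So it is enough to show every interval $[0,z]$, $z>0$, has a forward iterate containing $[0,\,x_{m}+\tfrac{1}{k^{2}r^{m-1}}]$. The key inputs are: the identity in \eqref{kb}, which is purely algebraic and shows $kr^{m}x_{m}-b_{1}$ has the sign of $r^{m}k^{2}-k-r$, so in this regime $kr^{m}x_{m}>b_{1}$, where $b_{1}$ is the repelling fixed point (slope $k^{2}r^{m-1}>1$) of the increasing affine branch $x\mapsto k^{2}r^{m-1}(x-x_{m})$ of $g^{m+1}$; and the computation from the proof of Proposition \ref{prop7}(i), which here reads $g^{m+1}(kr^{m}x_{m})-kr^{m}x_{m}=kr^{m-1}(r^{m}k^{2}-k-r)x_{m}>0$. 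Tracking $[0,z]$ through iterates of $g^{m+1}$: on $[0,x_{m}]$ the map $g^{m+1}$ is decreasing onto $[0,kr^{m}x_{m}]$ and on $[x_{m},x_{m}+\tfrac{1}{k^{2}r^{m-1}}]$ increasing, so as soon as an iterate of the interval contains $x_{m}$ and reaches up to $kr^{m}x_{m}$, its image under $g^{m+1}$ again contains $0$ and reaches beyond $b_{1}$; thereafter the right endpoint is pushed monotonically away from $b_{1}$, and since it cannot remain $\le x_{m}+\tfrac{1}{k^{2}r^{m-1}}$ ($b_{1}$ being the only fixed value of the relevant affine step, and $kr^{m}x_{m}>b_{1}$), it exceeds $x_{m}+\tfrac{1}{k^{2}r^{m-1}}$ after finitely many steps; one further application of $g^{m+1}$ then gives $[0,1]$.

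Combining Steps 1 and 2, every nondegenerate $J$ has $g^{N}(J)=[0,1]$ for some $N$, so $g$ is transitive, hence chaotic, on $[0,1]$. I expect the main obstacle to be the bookkeeping in Step 2 for \emph{small} $z$: then $g^{m+1}([0,z])=[kr^{m}(x_{m}-z),\,kr^{m}x_{m}]$ is an interval of length $kr^{m}z$ that need not contain $0$, and one must verify that repeated application of $g^{m+1}$ (interleaved, when necessary, with the expansion from Step 1) does eventually make the interval straddle $x_{m}+\tfrac{1}{k^{2}r^{m-1}}$. It is precisely here that the hypothesis $r^{m}k^{2}-k-r>0$ is indispensable: it is exactly the condition (contrast Proposition \ref{prop7}) that the innermost band $[0,kr^{m}x_{m}]$ fails to be invariant under $g^{m+1}$, so that no proper band attractor can trap the orbit.
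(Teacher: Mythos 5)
Your reduction --- show that every nondegenerate interval $J$ has $g^{N}(J)=[0,1]$ for some $N$, then conclude chaos --- is exactly the reduction the paper makes (it cites Propositions 2 and 4 of \cite{B} for precisely this, so that tool is available after all; your alternative via transitivity of interval maps is also fine). Your Step 1 is correct: $r^mk^2>k+r>k$ gives $kr^m>1$, an interval none of whose iterates has $a$ in its interior picks up a slope factor of at least $kr^m$ every $m+1$ steps, and once $a$ is interior you reach $[y,1]$ and then $[0,z]$. The genuine gap is in Step 2, and it is the one you flag yourself without closing. Step 1 only delivers an interval $[0,z]$ with $z$ possibly arbitrarily small, and then $g^{m+1}([0,z])=[kr^m(x_m-z),\,kr^mx_m]$, which for small $z$ lies entirely above $b_1>x_m$ and so contains none of $0$, $x_m$, $b_1$; the premise of your tracking argument (``as soon as an iterate contains $x_m$ and reaches up to $kr^mx_m$\dots'') is never established. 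Re-running Step 1 on such an interval can simply return you to another short interval $[0,z']$: every time an iterate straddles a kink of $g$ or of $g^{m+1}$ the image is folded and can be shorter than the naive slope bounds suggest, and nothing in your argument excludes an endless loop of ``expand, fold, return to a short $[0,z']$'' with no net growth. (A smaller issue: the identity $g^{m+1}(kr^mx_m)-kr^mx_m=kr^{m-1}(r^mk^2-k-r)x_m$ presupposes $kr^mx_m\le x_m+1/(k^2r^{m-1})$, which is not guaranteed in $R_{m4}$.)

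The missing ingredient is the quantitative folding estimate that is the heart of the paper's proof: for an interval $J=(\alpha,\beta)$ straddling the fold point $x_m$ (or $x_m+1/(k^2r^{m-1})$) of $g^{m+1}$, writing $x_m=\theta\alpha+(1-\theta)\beta$,
\[ |g^{m+1}(J)|=\max\{kr^m(x_m-\alpha),\,k^2r^{m-1}(\beta-x_m)\}\ \ge\ \min_{0\le\theta\le1}\max\{kr^m(1-\theta),\,k^2r^{m-1}\theta\}\,|J|\ =\ \frac{k^2r^m}{k+r}\,|J|, \]
and $k^2r^m/(k+r)>1$ is \emph{exactly} the hypothesis $r^mk^2-k-r>0$ --- this, not merely $kr^m>1$, is where the hypothesis does its real work. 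The paper organizes the second half around the periodic orbit $a_1<\cdots<a_{m+1}$ of Proposition \ref{prop8}: if no iterate of $J$ meets this orbit, every iterate is trapped in one of the complementary intervals $[0,a_1)$, $(a_i,a_{i+1})$, $(a_{m+1},1]$, and a case analysis (the folds occur only inside $(a_1,a_2)$) shows that a bounded iterate multiplies length by at least $L=\min\{kr^m,\,k^2r^{m-1},\,k^2r^m/(k+r)\}=k^2r^m/(k+r)>1$, a contradiction; if some iterate contains $a_1$, a monotonicity argument (there is no fixed point of $g^{m+1}$ in $(b_1,\,x_m+1/(k^2r^{m-1})]$, and $b_1<kr^mx_m$ by \eqref{kb}) forces the iterates to engulf $[a_1,\,x_m+1/(k^2r^{m-1})]$ and hence $[0,1]$ via Lemma \ref{lem7}. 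Your proposal contains the second mechanism in embryonic form but entirely lacks the first, and without it Step 2 cannot be completed.
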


\begin{proof} In view of Propositions 2 and 4 in \cite{B},
	we need only show that if $J$ is a nontrivial interval, then $g^n(J)=[0,1]$
for some $n>0$. In the following, $a_i$ and $b_i$ are the periodic orbits from 
Proposition \ref{prop8}, where we note that
\[ a_1<a_2<\cdots<a_m<a<a_{m+1}.\]
	
\medskip  

First suppose $a_1\in J$. Then $a_1\in g^{n(m+1)}(J)$ for
all $n\ge 0$. Then we cannot have $g^{n(m+1)}(J)\subset [0,x_m)$ for all
$n\ge 0$ for otherwise, looking at the graph of $g^{m+1}$ in Lemma \ref{lem7},
the length of $g^{n(m+1)}(J)$ would be $(kr^m)^n$ times
the length of $J$ which $\to\infty$ as $n\to\infty$. So there exists $n>0$ such that
$[a_1,x_m]\subset g^{n(m+1)}(J)$. Then $[0,a_1]=g^{m+1}([a_1,x_m])\subset g^{(n+1)(m+1)}(J)$.
So we can assume $J=[0,a_1]$.
\medskip

Looking at the graph of $g^{m+1}$ as described in Lemma \ref{lem7}, we see that $g^{m+1}(J)=[a_1,kr^mx_m]$.
Since $r^mk^2-k-r>0$, $b_1<kr^mx_m$ (see Eq. \ref{kb}). So we can assume $J=[a_1,\alpha]$,
where $\alpha>b_1$. Then $g^{n(m+1)}(J)$ contains $[a_1,g^{n(m+1)}(\alpha)]$
for all $n\ge 0$. Suppose $g^{n(m+1)}(\alpha)\le x_m+1/(k^2r^{m-1})$ for all $n\ge 0$.
Then we see from the graph of $g^{m+1}$ that $g^{n(m+1)}(\alpha)$ is an
increasing sequence whose limit would be a fixed point of $g^{m+1}$ in $(b_1,x_m+1/(k^2r^{m-1})]$.
However there is no such fixed point. Hence there exists $n\ge 0$ such that 
$g^{n(m+1)}(J)$ contains $[a_1,x_m+1/(k^2r^{m-1})]$ and so 
$g^{(n+1)(m+1)}(J)=[0,1]$.
Thus we have proved that if $a_1\in J$, then $g^n(J)=[0,1]$ for large $n$.
\medskip

What remains is to show that the situation that
$a_1, a_2,\ldots, a_{m+1}\notin g^n(J)$, where $a_{i+1}=g^i(a_1)$,
for all $n\ge 0$ is not possible. 
Then we must have that
for all $n\ge 0$, $g^n(J)$ is a subset of one of the intervals
$[0,a_1)$, $(a_i,a_{i+1})$ for $i=1,\ldots,m$ and $(a_{m+1},1]$. 

\medskip

If $J\subset [0,a_1)$, then $|g^{m+1}(J)|=kr^m|J|$, where $|\cdot|$ denotes length here.
\medskip

Suppose $J\subset (a_1,a_2)$. Then from Proposition \ref{prop8}, 
\[a_1<x_m<b=g(0)<a_2<g(x_m)=b+rx_m.\] 
Then if $[x_m,x_m+1/(k^2r^{m-1})]\subset J$, $g^{m+1}(J)=[0,1]$, a possibility
which can be excluded. Then either (a) $J\subset (a_1,x_m+1/(k^2r^{m-1}))$
or (b) $J\subset (x_m,a_2)$. If (a) holds, then either $x_m\notin J$,
in which case $|g^{m+1}(J)|\ge kr^m|J|$ or $|g^{m+1}(J)|\ge k^2r^{m-1}|J|$, or
$x_m\in J$ and $|g^{m+1}(J)|\ge (k^2r^m/(k+r))|J|$ 
since if we write $J=(\alpha,\beta)$ so that $x_m=\theta\alpha+(1-\theta)\beta$, then
\[\begin{array}{rl} |g^{m+1}(J)|
&=\max\{kr^{m}(x_m-\alpha),k^2r^{m-1}(\beta-x_m)\}\\ \\
&\ge kr^{m-1}|J|\min_{0\le \theta\le 1}\max\{r(1-\theta),k\theta\}\\ \\
&=\displaystyle\frac{k^2r^m}{k+r}|J|;\end{array}\]
if (b) holds, then either $x_m+1/(k^2r^{m-1})\notin J$,
in which case $|g^{m+1}(J)|\ge k^2r^{m-1}|J|$ or $|g^{m+1}(J)|\ge kr^{m}|J|$, or
$x_m+1/(k^2r^{m-1}) \in J$ and $|g^{m+1}(J)|\ge (k^2r^m/(k+r))|J|$ 
since if we write $J=(\alpha,\beta)$, $x_m+1/(k^2r^{m-1})=\theta\alpha+(1-\theta)\beta$,
\[ |g^{m+1}(J)|\ge kr^{m-1}|J|\min_{0\le \theta\le 1}\max\{k(1-\theta),r\theta\}=\frac{k^2r^m}{k+r}|J|.\]

Hence if $J\subset (a_1,a_2)$, $|g^{m+1}(J)|\ge L|J|$, where
\[ L=\min\{kr^m, k^2r^{m-1}, k^2r^m/(k+r)\}=k^2r^m/(k+r)>1\]
since $r^mk^2-k-r>0$.
\medskip

If $J\subset (a_i,a_{i+1})$ with $2\le i\le m$, then $J=g^{i-1}(\tilde J)$, where $\tilde J\subset (a_1,a_2)$
and here $g(x)=b+rx$ since $a_i<a$ for $i=1,\ldots,m$.
Then $|g^{m+1}(\tilde J)|\ge L|\tilde J|$ and $|J|=r^{i-1}|\tilde J|$.
Hence
\[|g^{m+1-i+1}(J)|=|g^{m+1}(\tilde J)|\ge L|\tilde J|=(L/r^{i-1})|J|\ge L|J|.\]
\medskip

If $J\subset (a_{m+1},1]$, then $|g(J)|\ge k|J|$.
\medskip

Since the length of the interval $J$ is expanded by some iterate of $g$ with coefficient of expansion
at least $L>1$, it would follow that
$|g^n(J)|$ is unbounded as $n\to\infty$, which is not possible. The proof is finished.
\end{proof}

\begin{remark}
	The condition $(k,r)\in R_{m4}$ is the same as Bassein's $((1-b)/a)^m>(1-b+ab)(1-a)/a$ 
	on page 129 of \cite{B}. She does not give the details on how to prove the chaos.	
\end{remark}

\subsection{Proof of Theorem \ref{thm2}}

\begin{proof}
	When $(k,r)\in R_{m1}$, it follows from Proposition \ref{prop6}, that $h(a_1)$ is an attracting periodic point
	for $f$ with period $m+1$, where $h(x)=1-k+kx$ as in Lemma \ref{lem1} (iv). Also
	the orbits of all points in $[1-k,1]=h([0,1])$, except those in the Cantor set $h(S)$, on which according to Proposition \ref{prop10} the dynamics is chaotic, 
	are attracted to the periodic orbit. Moreover, using Lemma \ref{lem1}, the orbits of all other points on the real line except those on the Cantor set $h(S)$ or
	preimages of this set are attracted to the periodic orbit.	 
	\medskip
	
	When $(k,r)\in R_{m2}$, it follows from Propositions \ref{prop7} and \ref{prop11} (i)
	that $h(\Lambda)\subset [1-k,1]$ is an invariant set for $f$ consisting of $m+1$ disjoint closed intervals
	on which the dynamics is chaotic. Moreover, using also
	Lemma \ref{lem1} (ii), the orbits of all points on the real line except those on the Cantor set $h(S)$ or preimages of this set, are attracted to $h(\Lambda)$.
	Again, according to Proposition \ref{prop10}, the dynamics on $h(S)$ is chaotic.  
	\medskip
	
	When $(k,r)\in R_{m3}$, it follows from Propositions \ref{prop7} and \ref{prop11} (ii)
	that $h(\Lambda_1)\subset [1-k,1]$ is an invariant set for $f$ consisting of $2m+2$ disjoint closed intervals
	on which the dynamics is chaotic. Moreover, using also
	Lemma \ref{lem1} (ii), the orbits of all points on the real line except those on the Cantor set $h(S)$ or on the orbit of the periodic point
	$h(a_1)$ or preimages of the set or periodic orbit, eventually lie in $h(\Lambda_1)$.
	Again, according to Proposition \ref{prop10}, the dynamics on $h(S)$ is chaotic.  
	
	\medskip
	
	When $(k,r)\in R_{m4}$, it follows from Proposition \ref{prop12} that $h([0,1])=[1-k,1]$ is an invariant set for $f$ 
	on which the dynamics is chaotic. Moreover, using
	Lemma \ref{lem1} (ii), the orbits of all points on the real line eventually lie in $[1-k,1]$.

\end{proof}

\begin{remark}
	In \cite{SAG}, $R_{m1}$ corresponds to Proposition 3.1 on page 595, 
	$R_{m2}$ to Proposition 4.1 on page 603, $R_{m3}$ to Proposition 4.2 on
	page 604 and $R_{m4}$ to ${\cal A}_1$ on page 604. However these authors
	do not describe the asymptotic fate of all points as we have.
	\medskip
	
	For the map $g$ in \eqref{2}, this parameter region is studied in Section 6 in \cite{B}.
	As here, she defines a subrange corresponding
	to each integer $m\ge 2$, which coincides with our $T_m$. Inside each subrange she shows that 
	the attractor is am $m+1-$periodic orbit (corresponding to our $R_{m1}$), 
	or the interval $I$ on which the dynamics is chaotic (our $R_{m4}$);
	otherwise she shows that the $m+1-$th iterate of the map 
	is chaotic on some subinterval (our $R_{m2}$ and $R_{m3}$).
	However she does not describe the attractor in $R_{m2}$ and $R_{m3}$ as we have.
	She does not show the existence of the invariant Cantor set in $R_{m1}$, $R_{m2}$ and $R_{m3}$.
	\medskip
	
	In Theorem 4.1 (g) in \cite{LT}, the region $R_{m1}$ is studied and the existence of the
	attracting periodic orbit is shown. However they do not show the existence of the invariant 
	Cantor set. A detailed analysis of the dynamics in $R_{m2}$, $R_{m3}$ and $R_{m4}$
	is not given.
	\medskip
	
	In \cite {ITN}, our $R_{m1}$ is $D^{(1)}_m$, our $R_{m2}\cup R_{m3}$ is $D^{(2)}_m$
	and our $R_{m4}$ is $D^*_m$. 
\end{remark}

\subsection{Geometry of the four regions}

In Theorem \ref{thm2}, we have divided $T_m$ into four regions $R_{m1}$, $R_{m2}$,
$R_{m3}$ and $R_{m4}$. Now we give some information about the geometry
of these regions.
\medskip

First note when $r>0$ and $k>0$ that $r^mk^2-k-r$ has the same sign as $k-L_m(r)$, where
\[ L_m(r)=\frac{1+\sqrt{1+4r^{m+1}}}{2r^m}.\]
Next note that using \eqref{altkm},
\[ \begin{array}{rl}
K_{m+1}(r)-L_m(r)
&=\displaystyle \frac{2(1+r+\cdots+r^m)}{2r^m}-\frac{1+\sqrt{1+4r^{m+1}}}{2r^m}\\ \\
&>\displaystyle \frac{1+2r-\sqrt{1+4r^{m+1}}}{2r^m}\\ \\
&>0\quad{\rm since}\quad (1+2r)^2>1+4r^{m+1}\;{\rm if}\; 0<r<1.
\end{array}\]
Next if $p(k)=r^{2m}k^{3}-k-r$, we see that $p'(k)=3(kr^{m})^{2}-1>0$
if $k>1/r^{m}$. Also $p(1/r^{m})=-r<0$ and $p(k)\to\infty$ as $k\to\infty$.{}
Hence $p(k)$ has a unique zero in $(1/r^{m},\infty)$, which we denote as $N_{m}(r)$.
Thus $r^{2m}k^{3}-k-r$ has the same sign as $k-N_m(r)$ when $k>1/r^m$.
Then, since $r^{2m}k^{3}-k-r>r^mk^2-k-r$ when $k>1/r^m$.
it follows that $N_{m}(r)<L_{m}(r)$.
\medskip

{\it Conclusion:} 
\[ \frac{1}{r^m}<N_m(r)<L_m(r)<K_{m+1}(r)\]
and $R_{m2}$ is defined by $N_m(r)<k<L_m(r)$, $R_{m3}$ by $1/r^m<k<N_m(r)$ 
and $R_{m4}$ by $k>L_m(r)$.
\medskip

The remaining problem is how $K_m(r)$ relates to $1/r^m$, $N_m(r)$ and $L_m(r)$.
\medskip

First note that
\[ \frac{1}{r^m}-K_m(r)=\frac{1}{r^m}-\frac{1-r^m}{r^{m-1}(1-r)}
=\frac{1-2r+r^{m+1}}{r^m(1-r)}.\]
$p_{\alpha}(r)=1-2r+r^{m+1}$ has the properties: $p_{\alpha}(0)=1$, $p_{\alpha}(1)=0$
and $p_{\alpha}$ strictly decreases to a negative minimum at $(2/(m+1))^{1/m}$
and then strictly increases to $0$. So there is a number $\alpha_m$ where
$0<\alpha_m<(2/(m+1))^{1/m}$ such that $p_{\alpha}(r)>0$ if
$0<r<\alpha_m$, $p_{\alpha}(\alpha_m)=0$ and $p_{\alpha}(r)<0$ if
$\alpha_m<r<1$. Since $p_{\alpha}(0.5)>0$ it follows that $\alpha_m>0.5$
and since $1-2r+r^{m+2}<1-2r+r^{m+1}$, it follows that $\alpha_{m+1}<\alpha_m$
so that $\alpha_m$ is a decreasing sequence.
\medskip

{\it Conclusion:} $K_m(r)-1/r^m$ has the same sign as $r-\alpha_m$, where $0.5<\alpha_m<1$.
\medskip

To compare $L_m(r)$ with $K_m(r)$, we look at
\[\begin{array}{rl}
 &r^mK_m(r)^2-K_m(r)-r\\ \\
 &=\displaystyle r^m\left(1+\frac{1}{r}+\cdots+\frac{1}{r^{m-1}}\right)^2-\left(1+\frac{1}{r}+\cdots+\frac{1}{r^{m-1}}\right)-r\\ \\
 &=\displaystyle\frac{P(r)}{r^{m-1}},\end{array}\]
where
\[ P(r)=r(1+r+\cdots+r^{m-1})^2-(1+r+\cdots+r^m)=-1+r^2+\sum^{2m-1}_{\ell=3}c_{\ell}r^{\ell},\]
where $c_{\ell}>0$. Hence for $0<r<1$,
\[ P'(r)=2r+\sum^{2m-1}_{\ell=3}c_{\ell}\ell r^{\ell-1}>0.\]
Note also that $P(0)=-1$ and $P(1)=\sum^{2m-1}_{\ell=3}c_{\ell}>0$. 
It follows that $P(r)$ has a unique zero $\beta_m$ in $(0,1)$. 
Also $P(r)<0$ if $0<r<\beta_m$ and $>0$ if $\beta_m<r<1$. Note that
since $P(r)$ is increasing in $m$, $\beta_m$ is a decreasing sequence.
Also since when $K_m(r)=1/r^m$,
\[r^mK_m(r)^2-K_m(r)-r=-r<0 \]
so that $P(r)<0$, it follows that $\alpha_m<\beta_m$.
\medskip

{\it Conclusion:} $K_m(r)-L_m(r)$ has the same sign as $r-\beta_m$, where 
$\alpha_m<\beta_m<1$.
\medskip

To compare $K_m(r)$ with $N_m(r)$, we look at
\[\begin{array}{rl}
&r^{2m}K_m(r)^3-K_m(r)-r\\ \\
&=\displaystyle r^{2m}\left(1+\frac{1}{r}+\cdots+\frac{1}{r^{m-1}}\right)^3-\left(1+\frac{1}{r}+\cdots+\frac{1}{r^{m-1}}\right)-r\\ \\
&=\displaystyle\frac{Q(r)}{r^{m-1}},\end{array}\]
where
\[ Q(r)=r^2(1+r+\cdots+r^{m-1})^3-(1+r+\cdots+r^m)=-1-r+\sum^{3m-1}_{\ell=3}c_{\ell}r^{\ell},\]
where $c_{\ell}>0$. Hence for $0<r<1$,
\[ Q''(r)=6c_3r+\sum^{3m-1}_{\ell=4}c_{\ell}\ell(\ell-1)r^{\ell-2}>0.\]
Note also that $Q(0)=-1$ and $Q(1)=m^3-m-1>0$. 
From the convexity of $Q$, the existence of a unique root $\gamma_m$ 
of $Q$ in $(0,1)$ follows. Also since when $K_m(r)=1/r^m$ so that $r=\alpha_m$,
\[r^{2m}K_m(r)^3-K_m(r)-r=-r<0, \]
it follows that $\alpha_m<\gamma_m$. On the other hand, when $r=\beta_m$,
\[r(1+r+\cdots+r^{m-1})^2=(1+r+\cdots+r^m) \]
so that
\[\begin{array}{rl}
 Q(r)&= r(1+r+\cdots+r^m)(1+r+\cdots+r^{m-1})-(1+r+\cdots+r^m)\\ \\
&= -(1+r+\cdots+r^m)(1-2r+r^{m+1})/(1-r)\\ \\
&=-(1+r+\cdots+r^m)p_{\alpha}(r)/(1-r)\\ \\
&>0\end{array}\]
since $\beta_m>\alpha_m$. Hence $\beta_m>\gamma_m$.
\medskip

{\it Conclusion:}  $K_m(r)-N_m(r)$ has the same sign as $r-\gamma_m$, where $\alpha_m<\gamma_m<\beta_m$.
\medskip

It follows from the above conclusions that
\[\begin{array}{rl}
R_{m1}&=\{(k,r)\in T_m: k<1/r^m\}=\{(k,r):0<r<\alpha_m,\; K_m(r)<k<1/r^m\}\\ \\
R_{m2}&=\{(k,r)\in T_m: N_m(r)<k<L_m(r)\}\\ \\
&=\{(k,r): 0<r<\beta_m,\; \max\{K_m(r),N_m(r)\}<k<L_m(r)\}\\ \\
R_{m3}&=\{(k,r)\in T_m: 1/r^m<k<N_m(r)\}\\ \\
&=\{(k,r):0<r<\gamma_m,\; \max\{1/r^m,K_m(r)\}<k<N_m(r)\} \\ \\
R_{m4} &=\{(k,r)\in T_m: k>L_m(r)\}\\ \\
&=\{(k,r): 0<r<1,\; \max\{K_m(r),L_m(r)\}<k<K_{m+1}(r)\}.
\end{array}\]

\begin{figure}
	\centering
	\includegraphics[width=0.80\textwidth]{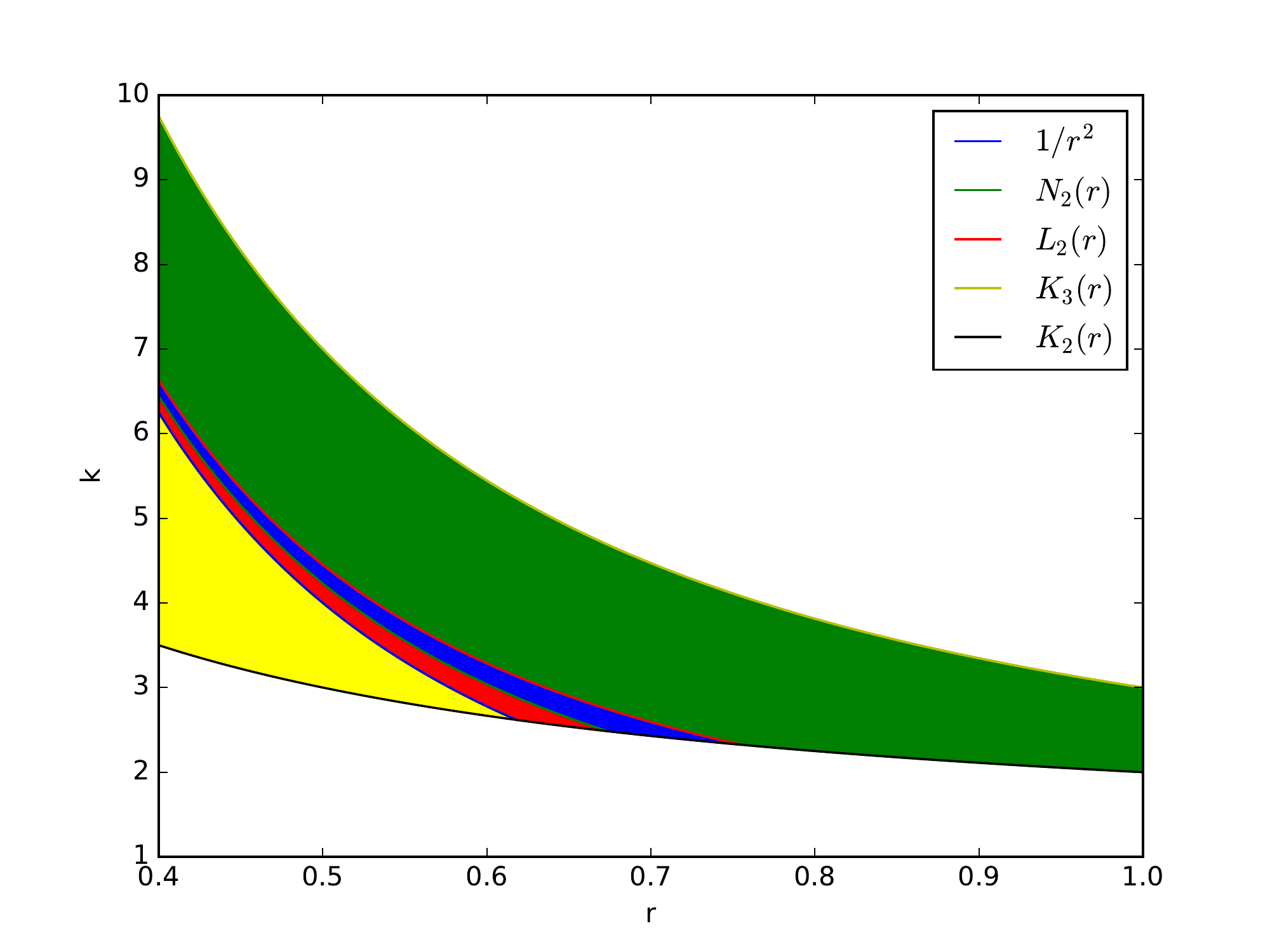}
	\caption{Regions $R_{21}$ (yellow), $R_{22}$ (blue), $R_{23}$ (red), $R_{24}$ (green) in $(r,k)-$parameter space.
		In $R_{21}$ there is an attracting periodic orbit with period 3, in $R_{22}$ there is a chaotic band attractor 
		consisting of 3 intervals, in $R_{23}$ there is a chaotic band attractor 
		consisting of 6 intervals and in $R_{24}$, the interval $[1-k,1]$ is a chaotic attractor.}
\end{figure}

\section*{Acknowledgement}

The authors wish to thank Professor Laura Gardini for helpful remarks.

\newpage

\end{document}